\def\g_p{\gamma_{pr}}
\newtheorem{thm}{Theorem}[section]
\newtheorem{lem}[thm]{Lemma}
\newtheorem{prop}[thm]{Proposition}
\newtheorem{conj}[thm]{Conjecture}
\newtheorem{claim}{Claim}
\newtheorem{fact}{Fact}
\newtheorem{cons}[thm]{Construction}
\begin{document}
\title{Codegree conditions for tilling balanced complete $3$-partite $3$-graphs and generalized 4-cycles
}
\author{Xinmin Hou$^a$,\quad Boyuan Liu$^b$, \quad Yue Ma$^c$, \\
\small $^{a,b,c}$ Key Laboratory of Wu Wen-Tsun Mathematics\\
\small School of Mathematical Sciences\\
\small University of Science and Technology of China\\
\small Hefei, Anhui 230026, China.\\
}

\date{}

\maketitle
\begin{abstract}
Given two $k$-graphs
$F$ and $H$, a perfect $F$-tiling
(also called an $F$-factor)
in $H$ is a set of vertex disjoint copies of $F$ that together cover the vertex set of $H$. Let $t_{k-1}(n, F)$ be the smallest integer $t$ such that every  $k$-graph $H$ on $n$ vertices with minimum codegree at least $t$ contains a perfect $F$-tiling.  Mycroft (JCTA, 2016) determined  the asymptotic values of $t_{k-1}(n, F)$ for $k$-partite $k$-graphs $F$.
Mycroft also conjectured that the error terms $o(n)$ in $t_{k-1}(n, F)$ can be replaced by a constant that depends only on $F$.
In this paper, we improve the error term of Mycroft's result
to a sub-linear term when $F=K^3(m)$, the complete $3$-partite $3$-graph with each part of size $m$. We also show that the sub-linear term
is tight for $K^3(2)$, {the result also provides another family of counterexamples of Mycroft's conjecture (Gao, Han, Zhao (arXiv, 2016) gave a family of counterexamples when $H$ is a $k$-partite $k$-graph with some restrictions.)} Finally, we show that  Mycroft's conjecture holds for generalized 4-cycle $C_4^3$ (the 3-graph on six vertices and four distinct edges $A, B, C, D$ with $A\cup B= C\cup D$ and $A\cap B=C\cap D=\emptyset$), i.e. we determine the exact value of $t_2(n, C_4^3)$.
\end{abstract}

\section{Introduction}
A {\it $k$-graph} $H$ is a pair $(V, E)$, where $V$ is a set of vertices and $E$ is a collection of subsets of $V$ with uniform size $k$, we call $|V|$ {\it the order} of $H$ and $|E|$ {\it the size} of $H$, also denoted by $|H|$.  We write graph for $2$-graph for short.
Given two $k$-graphs $F$ and $H$, an {\em $F$-tiling} in $H$ is a collection of vertex disjoint copies of $F$ in $H$.
An $F$-tiling is {\it perfect} if every vertex of $H$ is covered.
A perfect $F$-tiling in $H$ is also called an {\it $F$-factor}.
If $F$ is a single edge then an $F$-factor in $H$ is a perfect matching in $H$. As for matchings, a natural question for tilling is to determine the minimum degree threshold for finding a perfect $F$-tiling.
Given a subset $S$ of vertices in a $k$-graph $H$, the degree of $S$, denote by $d_H(S)$, is the number of edges of $H$ containing $S$. The minimum $s$-degree $\delta_s(H)$ of $H$ is the minimum of $d_H(S)$ over all $S\subseteq V(H)$ of size $s$.
For integer $n$ divisible by $|V(F)|$,
define {\it $t_s(n, F)$} to be the smallest integer $t$ such that every  $k$-graph $H $ on $n$ vertices with $\delta_{s}(H)\geq t$ contains a perfect $F$-tiling.
We write $[n]$ for the set $\{1,\ldots,n\}$, $n\in\mathbb{N}$ and $r\mathbb{N}$ for the set of positive integers divisible by integer $r$

Tiling problems have been widely studied for graphs. The celebrated Hajnal-Szemer\'edi  theorem~\cite{Hajnal-Szemeredi} states that $t_1(n, K_r)=(1-1/r)n$ for $n\in r\mathbb{N}$. Alon and Yuster~\cite{Alon-Yuster} generalized Hajnal-Szemer\'edi  theorem to $t_1(n, H)=(1-1/\chi(H)+o(1))n$ for every $H$ with chromatic number $\chi(H)$ and order $h$ and $n\in h\mathbb{N}$; later, Koml\'os, S\'ark\"ozy, and Szemer\'edi~\cite{K-S-S} proved that the error term $o(1)$ can be replaced by a constant $c=c(H)$.
In~\cite{Kuhn-Osthus}, K\"uhn and Osthus  improved Alon-Yuster result to $t_1(n, H)\leq(1-1/\chi^*(H))n+O(1)$, where $\chi^*(H)$ depends on the relative sizes of the colour classes in the optimal colourings of $H$ and satisfies $\chi(H)-1\le\chi^*(H)\le \chi(H)$. See \cite{survy for graph tilling} for a survey on graph tiling.

For $k\geq3$, we know much less and tiling problem becomes much harder. There are a number of research results on perfect matching problem, see \cite{survy for hypergraph matching} for a survey.

For complete $k$-graphs and related, the research focus on the case $k=3$. Let $K_4^{3}$ be the complete 3-graph on four vertices, and $K_4 ^{3}-\ell e$  be the 3-graphs obtained from $K_4 ^{3}$ by deleting $\ell$ edges. K\"uhn and Osthus \cite{t_2 for K4-2e} showed that $t_2(n,K_4^{3}-2e)=(1/4+o(1))n$, and Czygrinow, DeBiasio and Nagle \cite{exact t_2 for K4-2e} determined its exact value for large $n$. Lo and Markstr\"om~\cite{t_2 for k4-e} proved that $t_2(n,K_4^{(3)}-e)=(1/2+o(1))n$ and the exact value of $t_2(n,K_4^{(3)}-e)$ was determined for large $n$ by Han, Lo, Treglown and Zhao \cite{nonextremal} recently. Lo and Markstr\"om \cite{absorption} also proved that $t_2(n,K_4^{(3)})=(3/4+o(1))n$, and
the exact value of $t_2(n,K_4^{(3)})$ was determined for large $n$ by Keevash and Mycroft \cite{exact t_2 for K4}.

A $(k,\ell)$-cycle $C_s^{(k,\ell)}$ is a $k$-graph on $s$ vertices so that whose vertices can be ordered cyclically in such a way that the edges are sets of consecutive $k$ vertices and every two consecutive edges share exactly $\ell$ vertices.
Gao and Han \cite{C_6} and Czygrinow \cite{C_s3} determined the exact value of $t_2(n,C_6^{(3,1)})$ and $t_2(n,C_s^{(3,1)})(s\geq 6)$, respectively, and Gao, Han and Zhao \cite{t_ for loose cycles} determined $t_{k-1}(n,C_s^{(k,1)})$ for $k\ge 4$.
Han, Lo, and Sanhueza-Matamala \cite{t_ for tight cycles} proved $t_{k-1}(n, {C}_s^{(k,k-1)})\leq(1/2+1/(2s)+o(1))n$ where $k\geq3$ and $s\geq5k^2$ and this bound is asymptotically best possible for infinitely many pairs of $s$ and $k$.

In the study of tiling problems, another family of hypergraphs which was well studied are $k$-partite $k$-graphs.  A $k$-graph $F$ on vertex set $V$ is said to be {\it $k$-partite} if $V$ can be partitioned into vertex classes $V_1,\ldots, V_k$ so that for any $e\in F$ and $1\leq j\leq k$ we have $|e\cap V_j|$ =1. The partition $V_1,\ldots, V_k$ of $V$ is called a {\it $k$-partite realisation} of $V$.
Define $$\mathcal{S}(F):=\bigcup_{\chi}\{|V_1|,\ldots, |V_k|\} \mbox{ and } \mathcal{D}(F):=\bigcup_{\chi}\{|V_i|-|V_j| : i,j\in[k]\},$$ where in each case the union is taken over all $k$-partite realisations $\chi=\{V_1,\ldots, V_k\}$ of $V$.
The {\em greatest common divisor} of $F$, denoted by $\gcd(F)$, is then defined to be the {greatest common divisor} of the set $\mathcal{D}(F)$ (if $\mathcal{D}(F)=\{0\}$ then $\gcd(F)$is undefined). The {\em smallest class ratio} of $F$, denoted by $\sigma(F)$, is defined by
\begin{equation*}
\sigma(F):= \min_{S\in\mathcal{S}(F)}\frac{S}{|V(F)|}.
\end{equation*}
Note in particular that $\sigma(F)\leq1/k$, with equality if and only if $|V_1|=|V_2|=\ldots=|V_k|$ for any $k$-partite realisation $\chi=\{V_1, V_2,\ldots, V_k\}$ of $F$.
A {\it complete $k$-partite $k$-graph} with vertex classes $V_1,\ldots, V_k$ is a $k$-graph on $V=V_1\cup\ldots\cup V_k$ and edge set $E=\{ e : |e\cap V_i|=1 \mbox{ for each } i\in [1,k] \}$.
Observe that a complete $k$-partite $k$-graph has only one $k$-partite realisation up to permutations of the vertex classes $V_1,\ldots, V_k$. Hence, we write $K^{k}(V_1,\ldots,V_k)$ for a {complete $k$-partite $k$-graph} with vertex classes $V_1,\ldots, V_k$ and if the sizes of $V_i$ are emphasized, we write $K^k(|V_1|,\ldots,|V_k|)$ for $K^k(V_1,\ldots,V_k)$,  if $|V_1|=\ldots=|V_k|=m$ we write $K^{k}(m)$ for $K^{k}(V_1,\ldots,V_k)$ and call $K^{k}(m)$ the {\it balanced complete $k$-partite $k$-graph}.  Mycroft \cite{complete k-partite} proved a general result on tiling $k$-partite $k$-graphs.

\begin{thm}[Theorem 1.1, 1.2, 1.3 in \cite{complete k-partite}]\label{THM: Mycroft}
Let $H$ be a $k$-partite $k$-graph. Then for any $\alpha>0$ there exists $n_0$ such that if $G$ is a $k$-graph on $n\ge n_0$ vertices for which $|V(H)|$ divides $n$ and
\begin{equation} \label{e1}
\delta_{k-1}(G)\ge\left\{\begin{array}{ll}
                     n/2+\alpha n  & \mbox{if } {\mathcal{S}(F)=\{1\} \mbox{ or } \gcd(S(F))>1;}\\
                     \sigma(F)n+\alpha n & \mbox{if } {\gcd(F)=1;}\\
                      \max\{\sigma(F)n, \frac np\}+\alpha n& \mbox{if } {\gcd(\mathcal{S}(F))=1 \mbox{ and } \gcd(F) >1,}
                   \end{array}
           \right.
\end{equation}
then $G$ contains a perfect $F$-tiling, where $p$ is the smallest prime factor of $\gcd(F)$.  Moreover, the  equality holds in (\ref{e1}) for a large class of $k$-partite $k$-graphs including all complete $k$-partite $k$-graphs.
\end{thm}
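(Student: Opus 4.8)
\medskip
\noindent\textbf{Proof proposal.} I would attack this with the absorption method of R\"odl--Ruci\'nski--Szemer\'edi, organised through the lattice-based ("geometric") framework for hypergraph matchings of Keevash--Mycroft. The first move is to reduce perfect $F$-tiling in $G$ to a perfect-matching problem in an auxiliary $k$-graph: partition $V(G)$ into clusters via a weak regularity decomposition, form the "reduced" structure whose edges record which cluster multisets host a copy of $F$, and note that a perfect $F$-tiling of $G$ corresponds (up to $o(n)$ uncovered vertices) to a near-perfect matching in this auxiliary structure together with a cleanup step. The whole difficulty then concentrates in (a) having enough auxiliary edges --- a density/space condition --- and (b) the divisibility/lattice conditions that govern whether the cluster sizes can actually be balanced; (b) is where the case split in~\eqref{e1} is forced.

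For (a) I would prove an \emph{almost-perfect tiling lemma}: $\delta_{k-1}(G)\ge\sigma(F)n+\alpha n$ already yields an $F$-tiling of $G$ covering all but $o(n)$ vertices. The clean route is the fractional relaxation --- show $G$ admits a perfect fractional $F$-tiling (a nonnegative weighting of the copies of $F$ in which every vertex receives total weight $1$) as soon as the codegree passes $\sigma(F)n+\alpha n$, the constant $\sigma(F)$ being exactly the LP threshold because the dual optimum is a space-barrier configuration in which the smallest vertex class of $F$ cannot be accommodated. A standard greedy/weak-regularity conversion turns the fractional tiling into an integral one with $o(n)$ leftover vertices.

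The substantial part is the \emph{absorbing lemma}: find $A\subseteq V(G)$ with $|A|\le\alpha' n$ and $|V(F)|$ dividing $|A|$ such that $G[A\cup W]$ has an $F$-factor for every $W\subseteq V(G)\setminus A$ with $|W|\le\alpha'' n$ and $|V(F)|$ dividing $|W|$. This is built by supersaturation plus random deletion: for each small "target" $S$ of $\ell$ vertices ($\ell$ a fixed multiple of $|V(F)|$) one wants $\Omega(n^{c})$ absorbers --- bounded-size vertex sets $T$ such that $G[T]$ and $G[T\cup S]$ each have an $F$-factor --- and then a random subfamily of absorbers covers all targets. \emph{Producing these absorbers in sufficient quantity is the main obstacle}, and it is exactly here that the three cases of~\eqref{e1} arise. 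One must analyse the integer lattice generated, inside the cluster partition, by the index vectors of copies of $F$ (equivalently, the "transferral lattice"): the needed absorbers exist precisely when this lattice is full, i.e.\ contains every rebalancing vector that might be required. So one needs a robust structural dichotomy --- either $G$ is close to one of the extremal configurations, or the lattice is full --- and then: when $\mathcal{S}(F)=\{1\}$ or $\gcd(\mathcal{S}(F))>1$, a parity-type construction ($G$ split into two parts with edges chosen by the parity of the intersection with one part) keeps the lattice deficient up to codegree $n/2$, forcing the threshold $n/2+\alpha n$; when $\gcd(F)=1$ the lattice is already full above $\sigma(F)n+\alpha n$; and when $\gcd(\mathcal{S}(F))=1$ but $\gcd(F)=d>1$, a residue obstruction modulo the smallest prime $p\mid d$ (edges selected by an index-sum condition mod $p$ on a balanced $p$-partition) survives until the codegree beats $n/p$, giving $\max\{\sigma(F)n,n/p\}+\alpha n$.

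Finally I would assemble the pieces: reserve the absorbing set $A$, apply the almost-perfect tiling lemma to $G-A$ to leave a set $W$ of $o(n)\le\alpha'' n$ vertices (and check $|V(F)|\mid|W|$ using that $|V(F)|\mid n$ and $|V(F)|\mid|A|$), then absorb $W$ into $A$. For the sharpness ("moreover") assertion one exhibits the matching lower-bound constructions above --- the bipartite-parity $3$-graph for the $n/2$ cases and the index-sum construction mod $p$ for the $n/p$ case --- and verifies each is $F$-factor-free while its codegree is only $O(1)$ below the stated value; for complete $k$-partite $F$ this is immediate since $\sigma(F)=1/k$ and $\gcd(F)$ and $\gcd(\mathcal{S}(F))$ are read off directly from the part sizes, so these constructions match the upper bound and pin down $t_{k-1}(n,F)$ up to the $o(n)$ term.
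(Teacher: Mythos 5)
This statement is a quoted background theorem (Theorems~1.1--1.3 of Mycroft's paper, cited here as~\cite{complete k-partite}), so the present paper contains no proof of it to compare against; all I can do is assess your sketch against what Mycroft actually does.

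Your architecture is sound and faithful to the real proof at the level of ideas: reduce to an almost-perfect tiling plus absorption, organise the absorption through the transferral lattice, and observe that the three regimes of~\eqref{e1} are exactly the cases in which the divisibility barrier (the parity construction for $n/2$, the mod-$p$ index-sum construction for $n/p$) does or does not kick in before the space barrier at $\sigma(F)n$. The main packaging difference is that Mycroft does \emph{not} re-derive absorption from scratch along the R\"odl--Ruci\'nski--Szemer\'edi lines you sketch; he invokes the Keevash--Mycroft geometric theory of hypergraph matching wholesale (their main theorem already encapsulates the regularity decomposition, the space-barrier/divisibility-barrier dichotomy, and the absorption), constructs a suitable ``allocation'' hypergraph whose perfect matchings encode $F$-tilings, and then checks which barriers can arise for a given $F$ --- which is where $\sigma(F)$, $\gcd(\mathcal S(F))$ and $\gcd(F)$ enter. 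Your fractional-relaxation route to the almost-tiling lemma is a reasonable alternative, but in Mycroft's treatment the almost-perfect tiling also falls out of the Keevash--Mycroft machinery rather than from a separate LP argument. One thing your sketch elides that would need real work if you proceeded directly: proving the ``robust dichotomy'' (either near-extremal or lattice full) for general $k$-partite $F$ is the bulk of the technical content, and the index-vector bookkeeping is substantially more delicate than for a single complete $k$-partite $F$; leaning on the Keevash--Mycroft black box, as Mycroft does, is precisely what makes the general statement tractable.
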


 Furthermore, Mycroft also conjectured that the error terms in (\ref{e1}) can be replaced by a (sufficiently large) constant that depends only on $F$.

\begin{conj}[\cite{complete k-partite}]\label{conjecture}
Let $F$ be a $k$-partite $k$-graph. Then there exists a constant $C=C(F)$ such that the error terms in (\ref{e1}) can be replaced by $C$.
\end{conj}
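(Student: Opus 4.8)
The plan is to attack Conjecture~\ref{conjecture} by upgrading the proof of Theorem~\ref{THM: Mycroft} with the absorption method plus a stability analysis, the aim being to localise every $o(n)$ loss to a bounded number of copies of $F$. Fix a $k$-partite $k$-graph $F$, put $f=|V(F)|$, and let $d(n,F)$ be the right-hand side of~(\ref{e1}) with the $\alpha n$ term deleted; we seek $C=C(F)$ so that $\delta_{k-1}(G)\ge d(n,F)+C$ forces a perfect $F$-tiling in every $k$-graph $G$ on $n\in f\mathbb{N}$ vertices with $n$ large. The three ingredients are: (i) a \emph{robust absorber} $A\sse V(G)$ whose size is bounded in terms of $F$ only, with $G[A\cup W]$ having a perfect $F$-tiling for every $W\sse V(G)\sm A$ with $|W|\le C'$ and $f\mid|A\cup W|$; (ii) an \emph{almost perfect $F$-tiling} of $G\sm A$ leaving at most $C'$ vertices uncovered; (iii) combining (i) and (ii) into a perfect tiling, which is immediate. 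So the work is (i) and (ii), and I would carry them out in that order.

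For the robust absorber (step i) the idea is that a single $(k-1)$-set has $d(n,F)+C$ neighbours, so a positive-density family of "absorbing gadgets" (small $F$-tilings that can be rerouted to swallow one extra copy of $F$ worth of vertices) survives even the near-extremal graphs; a greedy/probabilistic selection of a bounded number of gadgets — rather than the $\Theta(n)$ used by Mycroft — should yield an $A$ of size $O_F(1)$ that nonetheless absorbs any $O_F(1)$-sized remainder, using that only finitely many remainder "types" occur. For step (ii) I would argue by contradiction via stability: if no $F$-tiling covers all but $C'$ vertices of $G\sm A$, then $G$ is $\gamma n$-close to a configuration making~(\ref{e1}) tight, i.e.\ either a \emph{space barrier} (about $\sigma(F)n$ or about $n/p$ vertices that meet every copy of $F$ too weakly) or a \emph{divisibility barrier} (a partition of $V(G)$ whose class sizes are incompatible with the vertex-class-size lattice of $F$). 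In the non-extremal regime — $\gamma n$-far from every barrier — I expect the fractional-relaxation and transferral machinery behind Theorem~\ref{THM: Mycroft} to give not just $\alpha n$-slack but bounded slack, by a compactness argument: the set of attainable "defect vectors" is finite, so distance $\gamma$ from all barriers already forces a near-perfect tiling with only $O_F(1)$ leftover, which $A$ then absorbs.

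In the extremal regime I would argue directly. When $G$ is within $\gamma n$ of a space or divisibility barrier, the $+C$ in the hypothesis forces a controlled number of edges crossing the "wrong" cut of the near-barrier; I would use a bounded collection of those edges to assemble a bounded number of copies of $F$ that simultaneously rebalance the residues so the divisibility obstruction vanishes and shrink the over-large side of a space barrier below the danger threshold. Deleting these $O_F(1)$ copies leaves a $k$-graph that is balanced with respect to the relevant partition and still has codegree essentially $d(n,F)$, so Theorem~\ref{THM: Mycroft} applies inside this generic remainder, where an $\alpha n$ error is affordable because it is paid only once.

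The hard part — and the point where the approach is genuinely in tension with the truth — is quantifying the barrier correction in the extremal regime: one must show that a \emph{constant} number of copies of $F$ repairs every near-barrier, and this can fail. The extremal configuration can be perturbed so that reaching a residue state compatible with $F$ needs $\Theta(n^{1-1/t})$ copies (this is exactly what drives the tight sub-linear term for $K^3(m)$ and the $K^3(2)$ counterexample quoted in the abstract), whereas for other $F$, notably the generalised $4$-cycle $C_4^3$, the correction is bounded and the conjecture does hold. So the honest outcome of this plan is not a proof of Conjecture~\ref{conjecture} in full, but a reduction, for each fixed $F$, to the finite question "how many copies of $F$ suffice to neutralise a near-barrier?''; Conjecture~\ref{conjecture} holds for precisely those $F$ whose answer is $O_F(1)$.
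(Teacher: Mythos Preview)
The statement you are attempting to prove is a \emph{conjecture} that the paper does not prove --- indeed, the paper \emph{disproves} it. Proposition~\ref{lowerbound} constructs, for all large $n$, a $3$-graph $H$ on $n$ vertices with $\delta_2(H)\ge n/2+\sqrt{2n}/5-3$ and no $K^3(2)$-factor; hence no constant $C=C(K^3(2))$ can replace the error term in~(\ref{e1}), and Conjecture~\ref{conjecture} is false for $F=K^3(2)$ (and, since a $K^3(2m)$-factor contains a $K^3(2)$-factor, for every $K^3(2m)$). So there is no ``paper's own proof'' to compare against, and any purported proof of the conjecture in full generality is necessarily wrong.

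To your credit, your final paragraph recognises exactly this: you identify that the barrier-repair step in the extremal regime can require $\Theta(n^{1-1/t})$ copies rather than $O_F(1)$, cite the $K^3(2)$ example, and conclude that the plan yields only a reduction, not a proof. That diagnosis is correct and matches what the paper actually does: it carries out the absorber plus almost-tiling plus extremal-analysis programme you sketch, but only for the specific families $K^3(m)$ and $K^3_{m,m}$, obtaining the sub-linear (and tight, for $m=2$) error in Theorem~\ref{mainK_m} and the exact constant in Theorem~\ref{mainC4}. The genuine gap in your write-up is therefore one of framing rather than mathematics: the first two paragraphs read as a proof strategy for Conjecture~\ref{conjecture}, when what you have is a template whose success depends on $F$, together with a correct explanation of why it fails for some $F$. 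A clean submission would state up front that the conjecture is false, point to the counterexample, and then present the template as a route to sharp or near-sharp bounds for particular $F$.
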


Gao, Han and Zhao \cite{t_ for loose cycles} improved the error term for complete $k$-partite $k$-graphs $F=K^k(a_1,\ldots,a_{k-1}, a_k)$ with $\gcd(F)=1$ and disproved Conjecture \ref{conjecture} for all complete $k$-partite $k$-graphs $F$ with $\gcd(F)=1$ and $a_{k-1}\geq2$. Han, Zang, and Zhao \cite{lattice-based} determined $t_1(n, K)$ asymptotically for all complete 3-partite 3-graphs $K$. In this paper, we focus on balanced complete 3-partite 3-graphs. One of our main results is the following.

\begin{thm}\label{mainK_m}
Let $m\geq2$ be an integer. There exists an integer $n_0\in\mathbb{N}$ such that the following holds. Suppose that $H$ is a 3-graph on $n\geq n_0$ vertices with $n\in 3m\mathbb{N}$. If $\delta_2(H)\geq n/2+m^{\frac1m}n^{1-\frac{1}{m}}$ then $G$ contains a $K^3(m)$-factor.
\end{thm}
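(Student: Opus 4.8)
The natural framework here is the absorption method, and the plan is to reduce Theorem~\ref{mainK_m} to two statements. First, an \emph{absorbing lemma}: a set $A\subseteq V(H)$, which may be taken to be a small linear fraction of $V(H)$, such that $H[A\cup W]$ contains a $K^3(m)$-factor for every $W\subseteq V(H)\setminus A$ with $|W|$ bounded by a small linear function of $n$ and $3m\mid|A|+|W|$. Second, an \emph{almost-tiling lemma}: $H-A$ contains a $K^3(m)$-tiling covering all but at most $\gamma n$ vertices, for any fixed $\gamma>0$. Since $\sigma(K^3(m))=1/3$ and the codegree hypothesis gives $\delta_2(H)\ge n/2+m^{1/m}n^{1-1/m}$, which comfortably exceeds $(1/3+\gamma)n$ even after deleting $A$ (losing only a small linear amount in the codegree), the almost-tiling lemma is the soft part: it follows from the machinery behind Theorem~\ref{THM: Mycroft} (or from a near-perfect fractional tiling plus a standard rounding/nibble argument). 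One then applies the almost-tiling lemma to $H-A$ and absorbs the small leftover using $A$ (here one uses $3m\mid n$). So the real content is the absorbing lemma, and --- crucially --- making it go through with only the sublinear surplus $m^{1/m}n^{1-1/m}$ over $n/2$.

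To build $A$ I would use lattice-based (Lo--Markstr\"om style) absorption. Call $u,v\in V(H)$ \emph{reachable} if for some bounded $i$ there are $\Omega(n^{3mi-1})$ sets $S$ of size $3mi-1$ with both $H[S\cup\{u\}]$ and $H[S\cup\{v\}]$ admitting a $K^3(m)$-factor; reachability is essentially an equivalence relation on all but a few vertices, splitting $V(H)$ into a bounded number of \emph{reachability classes}. If there is a single reachability class, a routine random-greedy construction yields the absorbing set $A$ with the required absorption property, and we are done. The crux is therefore to prove that, under the codegree hypothesis, there is exactly one class. This is precisely where the divisibility barrier sits: the extremal $3$-graphs with $\delta_2$ near $n/2$ and no $K^3(m)$-factor live on a vertex partition $V=V_0\cup V_1$ in which every copy of $K^3(m)$ meets $V_1$ in $0$ or $2m$ vertices, so $V_0$ and $V_1$ lie in different reachability classes, and moreover a $K^3(m)$-factor is obstructed when $|V_1|\not\equiv 0\pmod{2m}$.

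To rule out a second class, I would show that a partition witnessing two classes forces $H$ to be close to such a parity-type construction, and then estimate how far above $n/2$ the codegree of such a graph can be. This is where the exponent and the constant enter. Producing a copy of $K^3(m)$ that crosses $V_0$ and $V_1$ in a forbidden way amounts to finding a complete bipartite $K_{m,m}$ inside the link of a $V_1$-vertex restricted to $V_0$ (or symmetrically); to forbid all such crossings these link graphs must be $K_{m,m}$-free, hence by K\H{o}v\'ari--S\'os--Tur\'an / Zarankiewicz have only $O(N^{2-1/m})$ edges on $N\approx n/2$ vertices, which caps the codegree surplus of each pair at $(m^{1/m}+o(1))n^{1-1/m}$. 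Thus the hypothesis $\delta_2(H)\ge n/2+m^{1/m}n^{1-1/m}$ forces a forbidden crossing; combined with the near-parity structure, such a crossing can be used both to connect the two reachability classes and to repair the residue of $|V_1|$ modulo $2m$, producing the factor directly. In the regime genuinely far from the parity construction, the two-class scenario cannot occur at all, by the reachability count.

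The main obstacle is exactly the fragility of the codegree condition: it is not preserved under deleting linearly many vertices, so one cannot simply iterate a greedy or augmenting argument, and the available surplus --- only $m^{1/m}n^{1-1/m}$ --- has to be spent with great care. The delicate step is the quantitative crossing estimate: a surplus of $m^{1/m}n^{1-1/m}$ must be shown to guarantee not merely one forbidden $K_{m,m}$ but enough of them, spread widely enough, to simultaneously merge the reachability classes and fix the divisibility of $|V_1|$ --- that is, a robust, supersaturated form of the K\H{o}v\'ari--S\'os--Tur\'an bound carrying the sharp leading constant $m^{1/m}$. Pinning down this constant, so that it matches the construction witnessing tightness for $K^3(2)$, is where the genuine work lies; the rest is standard absorption bookkeeping.
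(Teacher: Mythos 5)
Your overall framework (absorption plus almost-tiling, with K\H{o}v\'ari--S\'os--Tur\'an controlling the constant $m^{1/m}$, and a parity-type extremal configuration as the obstruction) is the right picture, and you have correctly located the role of the codegree surplus. But there is a genuine gap in how you propose to spend that surplus, and the gap is exactly at the ``delicate step'' you flag at the end.

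You want to show that under the hypothesis $\delta_2(H)\ge n/2+m^{1/m}n^{1-1/m}$ the reachability partition collapses to a single class, by establishing a \emph{supersaturated} version of K\H{o}v\'ari--S\'os--Tur\'an with the sharp constant $m^{1/m}$: enough ``forbidden crossing'' copies of $K^3(m)$, robustly spread out, to merge the two classes. This cannot work as stated. The surplus $m^{1/m}n^{1-1/m}$ is precisely at the Zarankiewicz threshold, and at the threshold supersaturation fails: one is guaranteed a bounded number of crossing $K_{m,m}$'s, not $\Omega(n^{3m-1})$ connectors. (Proposition~\ref{lowerbound} and Construction~\ref{CONS: 2.2} in the paper show this concretely for $m=2$: a $K^3(1,2,2)$-free graph supplies a codegree surplus of order $\sqrt{n}$ inside the extremal configuration while contributing essentially no usable crossing copies.) So you cannot hope to merge the reachability classes and then run a single uniform absorption argument; in the $\gamma$-extremal regime, lattice-based absorption as you set it up is simply not available at this codegree.

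The paper resolves this by a hard case split that decouples the two ingredients you are trying to combine. When $H$ is \emph{not} $3\gamma$-extremal, the absorbing lemma (Lemma~\ref{absorption}) needs only $\delta_2(H)\ge(1/2-\gamma)n$ --- no surplus at all --- because non-extremality already forces the robust transferral structure (via Lemma~\ref{nonextremal} and the lattice machinery of Lemma~\ref{lattice}), and then Mycroft's almost-tiling lemma finishes. When $H$ \emph{is} $\gamma$-extremal, absorption is abandoned entirely. The surplus is spent in a one-shot, non-robust way: Claim~\ref{d-b-breaker for K_m} uses a single K\H{o}v\'ari--S\'os--Tur\'an application inside the (auxiliary, heavily-preprocessed) link of a well-chosen vertex to produce $O(m)$ parity-breaking copies of $K^3(m)$, exactly enough to fix $|B'|$ modulo $2m$, and then Claim~\ref{cover bad} together with Lemma~\ref{good} (which rests on a perfect-packing theorem of Lu and Sz\'ekely via Lemma~\ref{good D}) tiles the remaining near-$\mathcal{B}[A,B]$ structure directly. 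Your closing remark that the crossing could be used ``to produce the factor directly'' points in the right direction, but the direct-tiling tool for near-extremal host graphs is a substantive component of the argument, not bookkeeping, and without it your plan has no way to finish the extremal case.
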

For $K^3(2)$, we show that the lower bound of $\delta_2(G)$ is tight up to a factor.
\begin{prop}\label{lowerbound}
There exists an integer $n_1\in\mathbb{N}$. For every $n\geq n_1$, there exists a  $3$-graph $H$ on $n$ vertices with $\delta_2(H)\geq n/2+\sqrt{2n}/5-3$ containing no $K^3(2)$-factor.
\end{prop}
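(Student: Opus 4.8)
The plan is to construct a "space barrier" type extremal example in the spirit of the lower-bound constructions underlying Theorem \ref{THM: Mycroft}, but sharpened so that the additive slack above $n/2$ is only of order $\sqrt{n}$ rather than linear. The basic idea: a copy of $K^3(2)$ uses $6$ vertices split $2+2+2$ across its three parts. If we force every copy of $K^3(2)$ to interact with a small set $W$ in a prescribed way (e.g. each copy must meet $W$ but cannot contain too many vertices of $W$), and if $|W|$ is chosen so that a perfect tiling is arithmetically impossible, then we get a 3-graph with no $K^3(2)$-factor. Concretely, I would partition $V(H)=A\cup W$ with $|W|=w$ for a suitable $w\approx \sqrt{2n}$, and let $E(H)$ consist of all triples that contain at least one vertex of $W$, together with enough triples inside $A$ to push the codegree up to the claimed bound while still blocking the factor. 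The delicate point is choosing exactly which triples inside $A$ to include.

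Here is the concrete mechanism I would use. Every copy of $K^3(2)$ in $H$ has $6$ vertices, and if the only edges inside $A$ are, say, those triples $\{x,y,z\}\subseteq A$ with at most one of a carefully chosen "forbidden" configuration, one can arrange that each copy of $K^3(2)$ contained in $H$ uses at least $2$ vertices of $W$ (for instance, by making $W$ a vertex class obstruction: put a partition of $A$ into two classes $A_1,A_2$, include inside $A$ only triples with two vertices in $A_1$ and one in $A_2$, so a $K^3(2)$ drawn entirely from $A$ would need a consistent $3$-coloring that the edge set forbids, forcing it to borrow $\ge 2$ vertices from $W$). Then a $K^3(2)$-factor would consist of $n/6$ copies, each eating $\ge 2$ vertices of $W$, so we would need $w=|W|\ge 2n/6=n/3$ — which is far more than $\sqrt{2n}$, a contradiction. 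So we instead need the reverse: make $W$ so that each copy uses a number of $W$-vertices that is forced into a residue class incompatible with $w$. The cleanest version: arrange that each copy of $K^3(2)$ meets $W$ in an \emph{even} number of vertices (this is natural since the parts have size $2$), take $|W|$ odd, and conclude no perfect tiling exists because $\sum$(even numbers) $=n$-many vertices covered in $W$ forces $w$ even. To also get the $\sqrt{n}$ savings we let $w$ grow like $\sqrt{2n}$ and place a quantitative obstruction inside $A$.

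For the codegree computation: for a pair $\{u,v\}\subseteq V(H)$, if either $u$ or $v$ lies in $W$ then $d_H(\{u,v\})$ counts essentially all of the remaining $n-2$ vertices (since a triple with a $W$-vertex is an edge), giving codegree $\ge n-2 \ge n/2+\sqrt{2n}/5-3$ easily for large $n$. If $\{u,v\}\subseteq A$, then $d_H(\{u,v\})$ is $w$ (extend through any vertex of $W$) plus the number of $z\in A$ with $\{u,v,z\}\in E(H)$; we must design the $A$-edges so this is at least $n/2 - w + \sqrt{2n}/5 - 3$, i.e. the graph $H[A]$ restricted to links has minimum codegree roughly $n/2 - w$. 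Taking $w=\lceil\sqrt{2n}/5\rceil$-ish and a near-optimal construction inside $A$ (a balanced "blow-up of an edge"–style 3-graph on $|A|=n-w$ vertices achieving codegree $\tfrac{1}{2}(n-w)-O(1)$) yields total codegree $w + \tfrac12(n-w) - O(1) = \tfrac{n}{2} + \tfrac{w}{2} - O(1)$, and choosing $w$ near $\tfrac{2}{5}\sqrt{2n}$ gives exactly $n/2+\sqrt{2n}/5-O(1)$. The constant $3$ and the constant $1/5$ come out of optimizing these $O(1)$ and $w/2$ terms against the parity/divisibility obstruction's requirement on $w$.

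The main obstacle I anticipate is the simultaneous constraint satisfaction: the $A$-internal edge set must (i) be dense enough that the codegree inside $A$ is $\tfrac12|A|-O(1)$, yet (ii) structured enough that \emph{every} copy of $K^3(2)$ in $H$ is forced to meet $W$ in a controlled (say even, or $\equiv$ fixed residue) number of vertices, and (iii) the chosen $|W|=w$ must be compatible with $w\approx\sqrt{2n}/5$ after rounding while being incompatible with the tiling equation $\sum_i (\text{$W$-vertices in copy }i)=w$ over $n/6$ copies. Getting (ii) right is the crux: one needs a 3-graph on $A$ with high codegree all of whose copies of $K^3(2)$ are "lattice-obstructed," which is precisely the kind of lattice/divisibility barrier that appears in the Gao–Han–Zhao counterexamples; adapting it to $K^3(2)$ and tracking the $\sqrt{n}$-scale slack is the technical heart of the argument. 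Once the construction is fixed, verifying no $K^3(2)$-factor exists is a short counting/parity argument, and verifying the codegree bound is a routine case check on pairs meeting $W$ versus pairs inside $A$.
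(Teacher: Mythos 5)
Your construction cannot work as sketched, and the gap is not just in filling in details---the basic shape of the example is wrong. You propose a small set $W$ of size $w\approx\sqrt{2n}$, all triples meeting $W$ as edges, and a dense edge set inside $A:=V\setminus W$, and you want a parity obstruction of the form ``every copy of $K^3(2)$ meets $W$ in an even number of vertices.'' But take \emph{any} single edge $\{a_1,a_2,a_3\}\in H[A]$ together with any three vertices $w_1,w_2,w_3\in W$; the six vertices with parts $\{a_1,w_1\},\{a_2,w_2\},\{a_3,w_3\}$ form a copy of $K^3(2)$, because seven of the eight edges hit $W$ (hence are automatically present) and the eighth is $a_1a_2a_3$. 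This copy meets $W$ in $3$ vertices, which is odd, so the parity argument collapses as soon as $H[A]$ is nonempty. On the other hand $H[A]$ must be dense: a pair inside $A$ gets only $w\approx\sqrt{n}$ from $W$, so you need $\delta_2(H[A])\ge n/2-O(\sqrt n)$. There is no room between ``$H[A]$ empty'' and ``$H[A]$ dense,'' so this shape of construction is a dead end. Restricting which triples through $W$ are edges doesn't help either: pairs with one vertex in $W$ need codegree $\ge n/2$, and almost all of that must come from triples with exactly one $W$-vertex, which are exactly the triples that produce the bad $(3,3)$-type copies above.

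The paper's construction solves this tension differently, and you should notice that it does \emph{not} use a small $W$. It takes the base graph $\mathcal{B}[A,B]$ with $|A|$ and $|B|$ both close to $n/2$ but $|A|$ odd (so a $K^3(2)$-factor would need a copy meeting $A$ in an odd number of vertices). In $\mathcal{B}[A,B]$ alone, the binding constraint is on $AB$-pairs, whose codegree is $|B|-1\approx n/2-\Theta(\sqrt n)$, since $|A|$ was pushed above $n/2$. The gain of order $\sqrt{n}$ comes from adding a set of $AAB$-edges coming from a bipartite version of Mubayi's $K^3(1,2,2)$-free $3$-graph $G_q$ on roughly $n/2$ vertices, which has codegree about $\sqrt{n/2}$. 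The point you are missing is exactly why it is safe to add those extra edges: any $K^3(2)$ of type $(5,1)$ or $(3,3)$ with respect to $A\cup B$ would force four of the added $AAB$-edges to form a $K^3(1,2,2)$, which is forbidden by construction. That $K^3(1,2,2)$-freeness is the key structural ingredient that preserves the parity obstruction while boosting the codegree; without identifying something playing this role, the ``quantitative obstruction inside $A$'' you gesture at cannot be realized.
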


Clearly, Theorem~\ref{mainK_m} improves the error term $\alpha n$ in (\ref{e1}) to $Cn^{1-1/m}$ when $F=K^3(m)$, and Proposition~\ref{lowerbound} shows that the error term $C\sqrt{n}$ can not be replaced by a constant for $F=K^3(2)$ {and henceforth for $F=K^3(2m)$, which gives another family of counterexamples} for Conjecture \ref{conjecture} (Gao, Han and Zhao~\cite{t_ for loose cycles} gave a family of counterexample for Conjecture \ref{conjecture} when $F=K^k(a_1,\ldots,a_{k-1}, a_k)$ with $\gcd(F)=1$ and {$a_{k-1}\ge 2$.} Note that $\gcd(F)$ is undefined for $F=K^3(m)$. Hence our counterexample is  different from the one given in~\cite{t_ for loose cycles}).

Given integer $k$, let $\mathcal{C}_4^k$ be the family of $k$-graphs which contains four distinct edges $A$, $B$, $C$, $D$ with $A\cup B=C\cup D$ and $A\cap B=C\cap D=\emptyset$, which was first introduced by Erd\H{o}s~\cite{generalized 4-cycles}, and also is called the generalized 4-cycles.
For $k=2$ or 3,  we write $C_4^k$ for $\mathcal{C}_4^k$ instead because there is only one graph, up to isomorphism, in $\mathcal{C}_4^k$ in these cases. Note that $C_4^3$ is a supported subgraph of $K^3(2)$.

Let $X_1,X_2,\ldots,X_t$ be $t$ pairwise disjoint sets of size $k-1$ and let $Y$ be a set of $s$ elements disjoint from $\cup_{i\in[t]}X_i$. Define $K^k_{s,t}$ be the $k$-graph with vertex set $(\cup_{i\in [t]}X_i)\cup Y$ and edge set $\{X_i\cup \{y\}: i\in[t], y\in Y\}$. In \cite{bipartite Turan}, Mubayi and Verstra\"ete investigated the Tur\'an number of $K^3_{s,t}$. We show that Conjecture~\ref{conjecture} is valid for $K^3_{m,m}$, in particular for generalized 4-cycle since $K^3_{2,2}=C_4^3$. More precisely, we prove the following theorem.

\begin{thm}\label{mainC4}
For any integer $m$, there exists an integer $N$ such that for all $n\in3m\mathbb{N}$ and $n\ge {N}$, each 3-graph $H$ on $n$ vertices with
\begin{equation}\label{codegree condition}
\delta_2(H)\geq\left\{\begin{array}{ll}
                     \lfloor n/2\rfloor-1, & \mbox{if } {n\equiv1\pmod4}\\
                     \lceil n/2\rceil-1, & \mbox{otherwise.}\\
                   \end{array}
           \right.
\end{equation}
contains a $K^3_{m,m}$-factor.
\end{thm}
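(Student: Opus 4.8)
The starting point is the observation that $K^3_{m,m}$ contains a perfect matching: pairing the $i$-th pair $X_i$ with the $i$-th spine vertex $y_i$ gives $m$ pairwise disjoint edges $X_i\cup\{y_i\}$ covering all $3m$ vertices. Consequently a $K^3_{m,m}$-factor of $H$ induces a perfect matching of $H$, so $t_2(n,K^3_{m,m})$ is at least the R\"odl--Ruci\'nski--Szemer\'edi codegree threshold for perfect matchings in $3$-graphs; the right-hand side of \eqref{codegree condition} is exactly that threshold, and the divisibility construction of R\"odl--Ruci\'nski--Szemer\'edi --- a near-balanced bipartition $V=A\cup B$ with $|A|$ odd in which an edge belongs to $H$ iff it meets $A$ in an even number of vertices --- has no perfect matching, hence no $K^3_{m,m}$-factor, which accounts for the tightness of \eqref{codegree condition} and for the dependence on $n\bmod 4$ (the latter arises from optimising $|A|,|B|$ subject to $3m\mid n$). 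The content of the theorem is the upper bound, for which I would use the absorption method together with a stability (extremal versus non-extremal) dichotomy.

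\emph{Absorbing lemma.} Working first under the weaker hypothesis $\delta_2(H)\ge(1/2-\gamma)n$ for a small $\gamma=\gamma(m)>0$, I would construct an absorbing $K^3_{m,m}$-tiling $\mathcal A$ of sublinear size such that for every $W\subseteq V(H)\setminus V(\mathcal A)$ with $3m\mid|W|$ and $|W|$ at most a fixed constant, $H[V(\mathcal A)\cup W]$ has a $K^3_{m,m}$-factor. The building block is an absorber for a $(3m)$-set $S$: a bounded vertex set $A_S$ with $3m\mid|A_S|$ for which both $H[A_S]$ and $H[A_S\cup S]$ are spanned by copies of $K^3_{m,m}$. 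The codegree condition makes copies of $K^3_{m,m}$ on prescribed vertices, and the small ``connector'' configurations linking them, abundant; combined with the many automorphisms of $K^3_{m,m}$ this lets one show that every $(3m)$-set admits $\Omega(n^{|A_S|})$ absorbers, and a standard probabilistic deletion argument then produces $\mathcal A$.

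\emph{Almost-perfect tiling and the non-extremal case.} Next I would show that if $H$ satisfies \eqref{codegree condition} and is $\varepsilon$-far from the parity construction, then $H-V(\mathcal A)$ has a $K^3_{m,m}$-tiling covering all but at most $|V(\mathcal A)|$ vertices, after which $\mathcal A$ absorbs the remainder. The mechanism is that $\varepsilon$-farness from the parity construction improves the effective expansion of $H$ well beyond what $\delta_2(H)=n/2-O(1)$ guarantees on its own, supplying the $\Omega(n)$ of slack needed for both the sublinear absorber and an almost-perfect $K^3_{m,m}$-tiling (the latter obtained, say, from a perfect fractional $K^3_{m,m}$-tiling of a weak-regularity reduced hypergraph, which the robustness makes available; alternatively one can invoke Theorem~\ref{THM: Mycroft} once enough slack is in hand).

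\emph{Extremal case and the main obstacle.} Finally one must handle $H$ that satisfies \eqref{codegree condition} yet is $\varepsilon$-close to the parity construction $H^{\ast}(A,B)$. Here I would argue directly: fix the near-bipartition $(A,B)$, note that only few pairs and edges of $H$ disagree with $H^{\ast}(A,B)$, and exploit the fact that \eqref{codegree condition} is exactly one above the value at which the parity obstruction survives to move a bounded number of vertices across $(A,B)$ and correct the parity, then tile greedily within the two now-compatible sides. I expect this extremal analysis to be the principal difficulty: it requires first proving the stability statement used above (failure of the absorption argument at codegree $n/2-O(1)$ forces $\varepsilon$-closeness to $H^{\ast}$), then classifying the near-extremal $H$ still satisfying \eqref{codegree condition}, and for each producing a $K^3_{m,m}$-factor while correctly handling the residue $n\bmod 4$ responsible for the split in \eqref{codegree condition}. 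A further technical burden, needed to get a genuine constant rather than an $o(n)$ error term, is to make the almost-perfect tiling leave only $O_m(1)$ uncovered vertices and the absorber removable at a cost of only $O_m(1)$ in the codegree.
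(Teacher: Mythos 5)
Your framework is the one the paper uses---an absorbing lemma plus an almost-tiling lemma for the non-extremal case, and a direct analysis when $H$ is $\gamma$-close to $\mathcal B[A,B]$---so the scaffolding is right. Two specifics, though, are worth flagging, one a missed shortcut and one a genuine gap.

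The paper never constructs $K^3_{m,m}$-absorbers or an almost-perfect $K^3_{m,m}$-tiling at all. Since $K^3_{m,m}$ is a spanning subgraph of $K^3(m)$, a $K^3(m)$-factor is automatically a $K^3_{m,m}$-factor, so Lemma~\ref{absorption} and Lemma~\ref{almost packing} are applied to $K^3(m)$ verbatim; no fresh $K^3_{m,m}$-specific absorption argument is needed. Your direct-absorber plan would also work, but it duplicates effort. Also, the almost-tiling step needs no non-extremality: $\sigma(K^3(m))=1/3$, so the codegree $\lfloor n/2\rfloor-1$ already comfortably exceeds $(\sigma+\alpha)n$; non-extremality is consumed only by the absorbing lemma. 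The one place the paper genuinely uses $K^3_{m,m}$-specific structure is the parity-breaking step in the extremal case, Claim~\ref{d-b-breaker for C_4}: the analogous $K^3(m)$ parity breakers of Claim~\ref{d-b-breaker for K_m} require the stronger bound $n/2+m^{1/m}n^{1-1/m}$, whereas an $m\times m$ grid of edges---which is all $K^3_{m,m}$ asks for---can be extracted under the bare bound $\lfloor n/2\rfloor-1$.

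The genuine gap is your extremal endgame: after correcting parity you propose to ``tile greedily within the two now-compatible sides.'' Greedy tiling of a $3$-graph that is merely $\rho$-close to $\mathcal B[A,B]$ does not obviously terminate in a perfect factor; one can be left with a residue that the remaining edges cannot cover. The paper instead invokes a perfect-packing theorem for hypergraphs that are $\rho$-good with respect to a complete $k$-partite template: Lemma~\ref{good D} (from Han--Zang--Zhao, built on Lu--Sz\'ekely's local-lemma perfect-packing result), fed into Lemma~\ref{good}, together with careful divisibility bookkeeping (the parity breakers $\mathcal K_1$ of Claim~\ref{d-b-breaker for C_4}, the bad-vertex cover $\mathcal K_2$ of Claim~\ref{cover bad}, and a balancing tiling $\mathcal K_3$) arranged so that the residual sides $A^*,B^*$ hit exactly the sizes Lemma~\ref{good D} requires. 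Without some such black box replacing ``greedy,'' the extremal case does not close.
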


To show the lower bound in Theorem~\ref{mainC4} is tight, we give a construction of extremal 3-graph for $K^3_{m,m}$.
\begin{cons}\label{CONS: 1.1}
Given two disjoint sets $A,B$, let $\mathcal{B}[A,B]$ be  a 3-graph with vertex set $A\cup B$ and edge set $E=\{e : |e|=3 \mbox{ and } |e\cap A|=1 \mbox{ or } 3\}$.
\end{cons}

Clearly, $\delta_2(\mathcal{B}[A,B])=\min\{|A|-2,|B|-1\}$, and each copy of $K_{m,m}^3$ intersects $B$ with an even number of vertices and hence $\mathcal{B}[A,B]$ does not contain a $K_{m,m}^3$-factor provided that $|B|$ is odd. Now, suppose that $n\in 3m\mathbb{N}$. {Choose $|A|=n/2+1, |B|=n/2-1$ if $n\equiv0 \pmod{4}$;  $|A|=\lfloor n/2\rfloor, |B|=\lceil n/2\rceil$ if $n\equiv1 \pmod{4}$; $|A|=|B|=n/2$ if $n\equiv2 \pmod{4}$; and $ |A|=\lceil n/2\rceil, |B|=\lfloor n/2\rfloor$ if $n\equiv 3 \pmod{4}$.} We have $\delta_2(\mathcal{B}[A,B])=\lfloor n/2\rfloor-2$ if $n\equiv1 \pmod{4}$, and $\delta_2(\mathcal{B}[A,B])=\lceil n/2\rceil-2$, otherwise. But $\mathcal{B}[A,B]$ does not contain a $K_{m,m}^3$-factor.
The extremal 3-graph constructed here implies that (\ref{codegree condition}) is tight.

In the following we give some notation used in this paper.  For an $r$-graph $H=(V,E)$ and a vertex set $U\subseteq V$, write $H[U]$ for the subgraph of $H$ induced by $U$ and $\binom{U}{k}$ for  the set of all subsets of size $k$ of $U$.  For an $S\subseteq V$, the {\it neighbourhood} of $S$, denoted by $N_H(S)$ or $N(S)$ if there is no confusion from the context, is the set of subsets $T\subseteq V$ such that $S\cup T\in E(H)$, the {\it link graph} of $S$, denoted by $H_S$,  is the $(r-|S|)$-graph with vertex set $V(H)\setminus S$ and edge set $N_H(S)$. For a 3-graph $H=(V,E)$ and $u,v,w\in V$, we write   $uv$ and $uvw$ for the sets $\{u,v\}$ and $\{u,v,w\}$, respectively. Let $V_1, \ldots, V_r$ be a partition of $V(H)$. An edge $e=v_1v_2v_3$ is of type $V_{i_1} V_{i_2} V_{i_3}$ if $v_j\in V_{i_j}$ for $j\in[3]$ and $i_j\in [r]$, write $E(V_{i_1}V_{i_2}V_{i_3})$ for the set of edges of type $V_{i_1}V_{i_2}V_{i_3}$ and $e(V_{i_1}V_{i_2}V_{i_3})=|E(V_{i_1}V_{i_2}V_{i_3})|$.
A subgraph $F$ of $H$ is said to be of type $(t_1,\ldots, t_r)$ if $|V(F)\cap V_i|=t_i$ for each $i\in [r]$.

\section{Lemmas and proofs of main results}

To show Proposition~\ref{lowerbound}, we first revisit a construction of $K^{k}(1,\ldots,1,2,t+1)$-free ($t\geq 1$) $k$-graph $G$ with $e(G)\sim\frac{\sqrt{t}}{k!}n^{k-\frac{1}{2}}$ edges given by Mubayi~\cite{lower bound}.
We only need the special case that $k=3$ and $t=1$. Let $q$ be a prime power and $\mathbf{F}_q$ be the $q$-element finite field.
\begin{cons}[\cite{lower bound}]\label{CONS: 2.1}
Let $G_q$ be a 3-graph  with vertex set $V(G_q)=(\mathbf{F}_q\setminus\{0\})\times(\mathbf{F}_q\setminus\{0\})$, a 3-elements set $\{(a_i, a_i') : i\in[3]\}$ forms an edge in $G_q$ if and only if
\begin{equation*}
\prod_{i\in[3]}a_i+\prod_{i\in[3]}a_i'={\bf 1}_F.
\end{equation*}
\end{cons}
As shown in \cite{lower bound}, $G_q$ is $K^3(1,2,2)$-free and $\delta_2(G_q)\geq q-3$.

\begin{cons}\label{CONS: 2.2}
Let $G_q'$ be a copy of $G_q$. Define $H_q$ to be a 3-graph  with vertex set $V(G_q)\cup V(G_q')$, each edge of $H_q$ intersect $V(G_q)$ in precisely two vertices and a 3-elements set $\{(a_i, a_i') : i\in[3]\}$ with $(a_i, a_i')\in V(G_q)$ for $i=1,2$ and $(a_3, a_3')\in  V(G_q')$ forms an edge in $H_q$ if and only if
\begin{equation*}
\prod_{i\in[3]}a_i+\prod_{i\in[3]}a_i'={\bf 1}_F.
\end{equation*}
For convenience, we use ordered triple $(a,b,c)$ denote an edge of $H_q$ with $a,b\in V(G_q)$ and $c\in V(G_q')$.
\end{cons}

\noindent{\bf Remark.} By the constructions of $G_q$ and $H_q$, we know that an edge $e=abc\in E(G_q)$ corresponds to three edges  $e_1=(a,b,c), e_2=(a,c,b), e_3=(b,c, a)$ in $H_q$, and $H_q$ possibly contains some edges of the form $(a,b,a)$ or $(a,b,b)$. The following fact shows that $H_q$ inherits the property from $G_q$.
\begin{fact}\label{FACT: f2.1}
The 3-graph $H_q$ is $K^3(1,2,2)$-free and $\delta_2(H_q)\ge q-3$.
\end{fact}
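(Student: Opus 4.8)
The plan is to transfer both properties of $G_q$ (established by Mubayi) across the "doubling" construction $H_q$ by exploiting the explicit algebraic description of the edge set. Throughout I write a vertex of $V(G_q)$ or $V(G_q')$ as a pair $(a,a')$ with $a,a'\in\mathbf{F}_q\setminus\{0\}$, and I use the ordered-triple convention: an edge of $H_q$ is written $(u,v,w)$ with $u,v\in V(G_q)$, $w\in V(G_q')$, and the defining relation is symmetric in the three pairs, namely $a_ub_va_w+a_u'b_v'a_w'=\mathbf 1_F$ (abusing notation slightly for the coordinates).

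\textbf{Codegree bound.} First I would verify $\delta_2(H_q)\ge q-3$. A pair of vertices of $H_q$ that lies in some edge is either of the form $\{u,v\}$ with $u,v\in V(G_q)$, or of the form $\{u,w\}$ with $u\in V(G_q)$, $w\in V(G_q')$ (a pair inside $V(G_q')$ lies in no edge, so its degree condition is vacuous, but I should note that $\delta_2$ is taken over pairs; here I follow the paper's convention that only pairs contained in at least one edge are relevant, or else restrict attention to $V(G_q)$-heavy pairs — I will state which). Fix $u=(a,a')$, $v=(b,b')\in V(G_q)$; completing to an edge means choosing $w=(c,c')\in V(G_q')$ with $abc+a'b'c'=\mathbf 1_F$. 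Since $a,a',b,b'\neq 0$, the products $ab$ and $a'b'$ are nonzero, so $\{(c,c') : abc+a'b'c'=\mathbf 1_F\}$ is an affine line in $\mathbf F_q^2$ with $q$ points; discarding the at most two points with $c=0$ or $c'=0$ leaves at least $q-2$ valid choices of $w$, so $d_{H_q}(uv)\ge q-2$. For a mixed pair $\{u,w\}$ with $u=(a,a')\in V(G_q)$, $w=(c,c')\in V(G_q')$: an edge containing it has the form $(u,v,w)$ or $(v,u,w)$ with $v=(b,b')\in V(G_q)$, and the relation $abc+a'b'c'=\mathbf 1_F$ again defines an affine line in the $(b,b')$-plane ($ac,a'c'\neq 0$) with $q$ points, of which at most two are forbidden, and each surviving $v$ gives (at least) one edge containing $\{u,w\}$; hence $d_{H_q}(uw)\ge q-2\ge q-3$. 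This gives $\delta_2(H_q)\ge q-3$ (indeed $\ge q-2$ on the pairs that matter).

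\textbf{$K^3(1,2,2)$-freeness.} This is the main obstacle and needs a short structural argument rather than a one-line count. Suppose for contradiction $H_q$ contains a copy of $K^3(1,2,2)$ on parts $\{x\}$, $\{y_1,y_2\}$, $\{z_1,z_2\}$, so all four triples $x y_i z_j$ are edges of $H_q$. I would case on how these five vertices distribute between $V(G_q)$ and $V(G_q')$. Recall every edge of $H_q$ uses exactly two vertices of $V(G_q)$ and one of $V(G_q')$; summing over the four edges, $V(G_q')$ receives four (with multiplicity) of the incidences. Analyzing the bipartite incidence pattern of a $K_{2,2}$-with-a-pendant-part against a "$2$ from $A$, $1$ from $B$" rule forces one of a few configurations; in each I want to produce a copy of $K^3(1,2,2)$ in $G_q$ itself, contradicting Mubayi's result. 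Concretely: if the unique $V(G_q')$-vertex across all four edges is the same vertex $w$ (which happens precisely when $w\in\{z_1,z_2\}$ is forced to be the class of size... ) — more carefully, since each edge contributes exactly one $V(G_q')$-vertex and the four edges are $xy_1z_1, xy_1z_2, xy_2z_1, xy_2z_2$, the $V(G_q')$-vertices used are among $\{x,y_1,y_2,z_1,z_2\}$, and a counting/parity check on which of the five vertices can be the "$B$-vertex" of an edge shows the $B$-side vertices form either $\{z_1,z_2\}$, or $\{x\}$, or a single $y_i$, etc. In the generic case $\{z_1,z_2\}\subseteq V(G_q')$ and $x,y_1,y_2\in V(G_q)$: writing $x=(a,a')$, $y_i=(b_i,b_i')$, $z_j=(c_j,c_j')$, the four relations $ab_ic_j+a'b_i'c_j'=\mathbf 1_F$ say exactly that the triple $\{(a,a'),(b_1,b_1'),(b_2,b_2')\}$... hmm, that's not directly a $G_q$-edge. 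Instead I observe: the map $(b,b')\mapsto(ab,a'b')$ is a bijection on $(\mathbf F_q\setminus\{0\})^2$, so setting $B_i=(ab_i,a'b_i')$ and keeping $z_j$, the four equations become $B_i\cdot z_j:=(ab_i)c_j+(a'b_i')c_j'=\mathbf 1_F$ for all $i,j$ — i.e. a $K^3(1,2,2)$... still two-sided. The cleanest route: subtract the equations pairwise to get $(ab_1-ab_2)c_j=-(a'b_1'-a'b_2')c_j'$ for $j=1,2$, forcing $(c_1,c_1')$ and $(c_2,c_2')$ proportional, hence equal up to scalar; combined with nonvanishing of coordinates and a second pairwise subtraction in the $y$-variables, I derive either $y_1=y_2$ or $z_1=z_2$, the desired contradiction. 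The remaining distributions (e.g. one $y_i$ or $x$ on the $V(G_q')$-side, some $z_j$ on the $V(G_q)$-side) are handled by the same "pairwise subtract, deduce proportionality, use coordinates nonzero" computation, each time collapsing two supposedly distinct vertices; I expect one of these mixed cases to require also invoking that $G_q$ itself is $K^3(1,2,2)$-free to finish, which is available. I would organize these as three or four short subcases and remark that the doubling construction is designed precisely so that every such configuration either lives inside $G_q$ or is algebraically degenerate.
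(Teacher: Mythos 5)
Your approach is essentially the paper's: the $K^3(1,2,2)$-freeness reduces to the observation (taken from Mubayi's construction) that the linear system $p_1x+p_1'y=p_2x+p_2'y=\mathbf{1}_F$ with $p_1,p_1',p_2,p_2'\neq 0$ and $(p_1,p_1')\neq(p_2,p_2')$ has at most one solution, and your pairwise-subtraction/proportionality argument is just one way of establishing that uniqueness. Two remarks are in order. Your suspicion that some mixed case would ``require also invoking that $G_q$ itself is $K^3(1,2,2)$-free'' is misplaced: since every edge of $H_q$ has exactly one vertex in $V(G_q')$, parity forces exactly one of the three vertex classes of a putative $K^3(1,2,2)$ to lie entirely inside $V(G_q')$ (either the singleton or one of the two pairs), and in every such case the same algebraic argument applies verbatim to the singleton together with one pair on the opposite side, because the edge-defining relation depends only on coordinates and is blind to which copy a vertex lives in; no appeal to $G_q$'s own $K^3(1,2,2)$-freeness is needed, and the paper's ``WLOG $a,b_1,b_2\in V(G_q)$'' is precisely this observation in truncated form. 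On the codegree side, you are right to flag that a pair inside $V(G_q')$ has empty neighborhood, so taken literally $\delta_2(H_q)=0$; the stated bound $q-3$ holds only for pairs meeting $V(G_q)$, which is all that the proof of Proposition~\ref{lowerbound} actually uses (there $\mathcal{B}[A,B]$ supplies the codegree for pairs internal to $B$), and the paper's one-line justification ``$G_q$ is a subgraph of $H_q$'' is loose shorthand for the bijection between common neighbors in $G_q$ and their copies in $V(G_q')$.
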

\begin{proof}
Clearly,  $\delta_2(H_q)\ge q-3$ since $G_q$ is a subgraph of $H_q$. We show that $H_q$ is also $K^3(1,2,2)$-free.
As shown in \cite{lower bound}, for $(p_1, q_1), (p_2, q_2)\in(\mathbf{F}_q\setminus\{0\}\times(\mathbf{F}_q\setminus\{0\})$, the equation system
\begin{equation}\label{K-free}
\left\{
  \begin{array}{l}
   p_1x + p_1'y = 1_F \\
   p_2x + p_2'y = 1_F
  \end{array}
\right.
\end{equation}
has at most one solution $(x, y)$ if $(p_1, p_1')\ne (p_2, p_2')$. Suppose that $H_q$ contains a copy of $K^3(1,2,2)$, say $K^3(\{a\},\{b_1,b_2\},\{c_1,c_2\})$. Let $a=(u,u')$, $b_1=(v_1,v_1')$ and $b_2=(v_2,v_2')$. Without loss of generality, we may assume $a, b_1, b_2\in V(G_q)$.
Now let $p_1=uv_1$, $p_1'=u'v_1'$, $p_2=uv_2$,  and $p_2'=u'v_2'$.  Since $(v_1,v_1')\neq(v_2,v_2')$, we have $(p_1, p_1')\ne (p_2, p_2')$. So the equation system (\ref{K-free}) has at most one solution, this is a contradiction to $K^3(\{a\},\{b_1,b_2\},\{c_1,c_2\})\subseteq H_q$. \quad 
\end{proof}
\medskip

\noindent{\it\bf Proof of Proposition \ref{lowerbound}:} { For sufficiently large $n$, without loss of generality, we may assume $n\in 6\mathbb{N}$, choose an odd prime power $q$ and $n_0=(q-1)^2$ such that $n/2+2/5\sqrt{n/2}\le n_0\le n/2+1/2\sqrt{n/2}$.} Let $\mathbf{F}_q$ be the $q$-element finite field and let $A$ and $B$ be the sets obtained by deleting any one element and  $2n_0-n-1$ elements from $(\mathbf{F}_q\setminus\{0\})\times(\mathbf{F}_q\setminus\{0\})$, respectively. Then $|A|=n_0-1$ and $|B|=n-n_0+1$, both of  them are odd.
Let $H'$ be the subgraph of $H_q$ induced by $A\cup B$ with $A\subset V(G_q)$ and $B\subset V(G_q')$. By Fact~\ref{FACT: f2.1}, $H'$ is $K^3(1,2,2)$-free and $d_{H'}(ab)\geq q-4$ for all $a\in A, b\in B$.
Let $H=\mathcal{B}[A,B]\cup H'$. Then {$\delta_2(H)\geq \min\{|A|-2,|B|-1+\sqrt{n_0}-3\}\geq n/2+2/5\sqrt{n/2}-3$.
We claim that $H$ does not contain a $K^3(2)$-factor.  Suppose to the contrary that $H$ contains a $K^3(2)$-factor. Since $|A|$ is odd, $H$ must contain a copy of $K^3(2)$ such that $|V(K^3(2))\cap A|$ is odd. Such a copy of $K^3(2)$ must be of type $(5,1)$ or $(3,3)$. Note that {copies of $K^{{3}}(2)$ in $\mathcal{B}[A,B]$ must intersect  $A$  in an even number of vertices}. It is an easy task to check that a copy of $K^3(2)$ of type $(5,1)$ or $(3,3)$ forces a copy of $K^3(1,2,2)$ in $H'$, a contradiction. \quad \rule{1mm}{2mm}

\medskip

The proof of Theorems \ref{mainK_m} and \ref{mainC4} are separated into {\it non-extremal} case and {\it extremal} case. For the non-extremal case, we use the standard absorbing method, which has been introduced by R\"{o}dl, Ruci\'nski and Szemer\'edi in \cite{original abs} and widely used in different research papers for example in \cite{exact t_2 for K4-2e,lattice-based,absorption}.

Roughly speaking, our proof follows two steps: first, we use an "absorbing lemma" to find a small absorbing set $W\subset V(H)$ with the property that given any "sufficiently small" set $U\subset V(H)\setminus W$, both $H[W]$ and $H[W\cup U]$ contain $K^3(m)$-factors; second, we use an "almost tiling lemma" to find a $K^3(m)$-tilling in $H\setminus W$ that covers all but at most $o(n)$ vertices. The first step will be completed in Lemma~\ref{absorption} and the second step has been done by an almost tiling lemma given by Mycroft in~\cite{complete k-partite}, we restate it in Lemma~\ref{almost packing}.

Given $\gamma>0$, $H$ and $G$ are two 3-graphs on the same vertex set $V$. We say that $H$ {\it $\gamma$-contains} $G$ if $|E(G)\setminus E(H)|\leq\gamma|V|^3$, and $H$ is called {\it $\gamma$-extremal} if there is a balanced partition of $V=A\cup B$ such that $|A|\le |B|$ and  $H$ $\gamma$-contains $\mathcal{B}[A,B]$.

\begin{lem}[Absorption lemma]\label{absorption}
Let $0<\epsilon_2\ll\epsilon_1\ll\gamma\ll1$ and $m$ be an positive integer. Suppose that $H$ is a 3-graph of order $n$ with $\delta_2(H)\geq(1/2-\gamma)n$. If $H$ is not $3\gamma$-extremal, then there exists a set $W\subset V(H)$ with $|W|\le\epsilon_1 n$ and $|W|\in 3m\mathbb{N}$, so that for any $U\subset V(H)\setminus W$ with $|U|\leq\epsilon_2n$ and $|U|\in 3m\mathbb{N}$, both $H[W]$ and $H[U\cup W]$ contain $K^3(m)$-factors.
\end{lem}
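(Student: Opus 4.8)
The plan is to follow the standard absorbing framework of Rödl–Ruciński–Szemerédi adapted to $K^3(m)$-factors. The key notion is that of an \emph{absorber}: for a fixed set $T\subset V(H)$ with $|T|=3m$, call a set $A_T$ of $3m\ell$ vertices (for a small absolute constant $\ell$, say $\ell=2$) a $T$-absorber if both $H[A_T]$ and $H[A_T\cup T]$ contain $K^3(m)$-factors. The first step is to show that under $\delta_2(H)\geq(1/2-\gamma)n$ and the non-$3\gamma$-extremal hypothesis, \emph{every} $3m$-set $T$ has at least $\beta n^{3m\ell}$ distinct absorbers for some $\beta=\beta(m,\gamma)>0$. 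Granting this, a routine probabilistic argument (choose each candidate $3m\ell$-set into a family $\mathcal{F}$ independently with probability $p=\Theta(n^{-3m\ell+1}\log n$-ish, tuned so $|\mathcal{F}|$ and the number of intersecting pairs are controlled, then delete overlaps and sets not absorbing enough) produces a family of $O(\epsilon_1 n/(3m\ell))$ pairwise disjoint absorbers such that every $3m$-set is absorbed by $\Omega(\epsilon_2 n)$ of them. Letting $W$ be the union of these absorbers together with a small $K^3(m)$-factor found greedily inside the leftover (to make $|W|\in 3m\mathbb{N}$ and to guarantee $H[W]$ itself has a $K^3(m)$-factor), we get: any $U$ with $|U|\le\epsilon_2 n$, $|U|\in 3m\mathbb{N}$ can be broken into $|U|/3m$ triples, each absorbed by a distinct unused absorber, yielding a $K^3(m)$-factor of $H[W\cup U]$. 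The standard computations here are $\epsilon_2\ll\epsilon_1\ll\gamma$, exactly as in \cite{original abs, absorption}.

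The heart of the matter, and the step I expect to be the main obstacle, is the \emph{absorber existence} claim: that every $3m$-set $T$ has $\Omega(n^{3m\ell})$ absorbers. The natural strategy is to build an absorber one \emph{transferral} at a time. Write $T=T_1\cup\cdots\cup T_m$ with $|T_i|=3$ (an arbitrary split). It suffices to find, for each vertex $v$, many \emph{$v$-absorbing gadgets}: small vertex sets $G_v$ such that $H[G_v]$ has a $K^3(m)$-factor and $H[G_v\cup\{v\}]$ also has one (this is the $3m=3$, $m=1$ flavored version of the absorber, iterated). Concretely, I would look for two vertex-disjoint copies of $K^3(m)$, say on vertex sets $X$ and $Y$, together with a copy of $K^3(m)$ on $(X\setminus\{x\})\cup\{v\}$ and one on $(Y\setminus\{y'\})\cup\{x\}$ plus one on $\ldots$ — i.e., a short "path" of overlapping $K^3(m)$-copies that lets one slide $v$ in and push the displaced vertices out as complete tiles. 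The codegree bound $\delta_2(H)\ge(1/2-\gamma)n$ gives, for any two vertices $u,w$, at least $(1/2-\gamma)n$ common neighbors, so complete $3$-partite configurations on few vertices are easy to find greedily; the genuine difficulty is ensuring that the "ends" of the path can always be completed into actual $K^3(m)$-copies, and this is precisely where one must rule out the extremal structure $\mathcal{B}[A,B]$.

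To handle that, I would argue that the only obstruction to finding enough absorbers is a bipartition-type deficiency, and then show non-$3\gamma$-extremality kills it. The relevant combinatorial fact is: if $\delta_2(H)\ge(1/2-\gamma)n$ and $H$ has \emph{few} copies of $K^3(m)$ through some vertex pair, or few "absorbing" $K^3(m)$-pairs, then one can extract a balanced partition $V=A\cup B$, $|A|\le|B|$, in which almost all edges have $|e\cap A|\in\{1,3\}$ — i.e., $H$ is $3\gamma$-extremal. Indeed: the codegree condition forces the "link" structure to be very dense, and a supersaturation/stability argument (every pair in $\ge(1/2-\gamma)n$ edges, combined with a counting lemma for $K^3(m)$ in dense $3$-partite setups) shows that either $K^3(m)$-copies proliferate — enough to run the gadget construction above — or the majority of non-edges cluster in a way forced by a vertex partition. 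Quantitatively, one sets up an auxiliary "bad pair" graph and shows it is either sparse (then greedy absorber-building works) or so dense and structured that it certifies extremality. This stability step is the technical core; once it is in place, the rest is bookkeeping with the hierarchy $\epsilon_2\ll\epsilon_1\ll\gamma\ll1$. After the absorbing set $W$ is fixed, Lemma~\ref{almost packing} (Mycroft's almost-tiling lemma) applied to $H\setminus W$ covers all but $\le\epsilon_2 n$ vertices, and absorbing the remainder into $W$ finishes the non-extremal case of Theorems~\ref{mainK_m} and \ref{mainC4}.
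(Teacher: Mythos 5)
Your overall framework — absorbers/connectors plus a stability argument against the bipartition $\mathcal{B}[A,B]$ — is the right intuition, and your ``path of overlapping $K^3(m)$-copies'' is exactly the $(x,y)$-connector notion of Lo and Markstr\"om that the paper uses. However, the heart of your proposal is relegated to one sentence (``either $K^3(m)$-copies proliferate... or the majority of non-edges cluster... this stability step is the technical core''), and this is precisely the step that is \emph{not} routine. The paper does not show directly that every $3m$-set $T$ has $\Omega(n^{3m\ell})$ absorbers; instead it proves the stronger structural statement that $V(H)$ is $(K^3(m), i, \eta)$-closed (after which the generic Lemma~\ref{abp} supplies the absorbing set), and it does so via the lattice-based machinery: first every vertex has a large $(F,1,\eta)$-closed neighbourhood and every triple of vertices contains a close pair (Claim~\ref{CLAIM: c1} via Lemma~\ref{closed pair}), so Lemma~\ref{partition} yields a partition of $V$ into at most two parts, each individually closed; then one must show the transferral $\mathbf{u}_1-\mathbf{u}_2$ lies in the robust $K^3(m)$-lattice $L^\mu_{\mathcal{P},F}(H)$ (Lemma~\ref{lattice}). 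It is exactly this last step where non-$3\gamma$-extremality enters, through Lemma~\ref{nonextremal}.

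That transferral step is where your ``auxiliary bad-pair graph'' sketch would have to do real work, and as written it doesn't. The paper classifies cross-pairs $xy$ into $E_0,E_1,E_2$ according to whether $d_X(xy)$ and $d_Y(xy)$ are large, and then proves two delicate claims by double-counting plus supersaturation (Lemma~\ref{supersaturation}): if $|E_0|$ is large, one finds $\Omega(n^{3m})$ copies of $K^3(m)$ of \emph{both} types $(m+s,m+t)$ for consecutive $(s,t)$; if $|E_0|$ is small, one first bounds $|E_1|$, identifies a large $Y_0$ whose vertices are $E_1$-saturated, and produces $\Omega(n^{3m})$ copies of type $(2m+s,t)$. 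These are the facts that certify the transferral and they are not a bookkeeping consequence of the codegree bound — the threshold $1/2$ is tight, and a single-sentence stability appeal does not obtain them. Without this step your argument cannot distinguish the genuine obstruction (a parity constraint from $\mathcal{B}[A,B]$) from a harmless almost-bipartite structure, so the proposal has a genuine gap in its central claim.
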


\begin{lem}[Almost tiling lemma, Lemma 1.5 in \cite{complete k-partite}]\label{almost packing}
Let $K$ be a $k$-partite $k$-graph. Then there exists a constant $C=C(K)$ such that for any $\alpha>0$ there exists an integer $n_0=n_0(K,\alpha)$ with the property that every $k$-graph $H$ on $n\geq n_0$ vertices with $\delta_{k-1}(H)\geq(\sigma(K)+\alpha)n$ admits a $K$-tiling covering all but at most $C$ vertices of $H$.
\end{lem}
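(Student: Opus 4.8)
\medskip
\noindent\emph{Proof sketch.} The plan is the hypergraph regularity method: pass to a dense reduced $k$-graph, find a near-perfect fractional $K$-tiling there, blow it up to cover all but $o(n)$ vertices of $H$, and then trim the uncovered set down to a constant depending only on $K$. Throughout fix a $k$-partite realisation $\chi=\{W_1,\dots,W_k\}$ of $K$ attaining $\sigma(K)$, with class sizes $a_j:=|W_j|$, $a_1\le\cdots\le a_k$, and $v_K:=|V(K)|=a_1+\cdots+a_k$, so $\sigma(K)=a_1/v_K$.

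First I would apply a weak form of the hypergraph regularity lemma to $H$ with constants $\epsilon\ll d\ll\alpha$, obtaining a partition $V(H)=V_0\cup V_1\cup\cdots\cup V_t$ with $|V_0|\le\epsilon n$, equal-sized clusters $V_1,\dots,V_t$, and all but at most $\epsilon\binom tk$ of the $k$-tuples of clusters $\epsilon$-regular. Let $\mathcal R$ be the reduced $k$-graph on $[t]$ whose edges are the $\epsilon$-regular $k$-tuples of density at least $d$. Deleting $V_0$ together with all edges meeting an irregular or sparse $k$-tuple perturbs any fixed codegree by only $O((\epsilon+d)n)$, so a routine averaging argument gives $\delta_{k-1}(\mathcal R)\ge(\sigma(K)+\alpha/2)\,t$.

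The heart of the argument is to produce a near-perfect fractional $K$-tiling of $\mathcal R$ relative to $\chi$: a nonnegative weighting of the ``placements'' — an edge $e$ of $\mathcal R$ together with a bijection $\pi$ from $\{W_1,\dots,W_k\}$ to its clusters, a placement of weight $w$ being deemed to use up a $w\,a_j/v_K$ share of the cluster $\pi(W_j)$ — such that no cluster is used to total share above $1$ and the total weight of placements is $(1-o(1))\,t/v_K$. This is a linear program, and I would bound its optimum from below by certifying that the dual is infeasible: a dual solution is a weighting $y\colon[t]\to\mathbb R_{\ge0}$ with $\sum_j a_j\,y_{\pi(W_j)}\ge1$ for every placement, and any such $y$ with $\sum_i y_i$ too small must be defeated by exhibiting a ``cheap'' placement — put the large classes on clusters of small $y$-value and, crucially, use $\delta_{k-1}(\mathcal R)\ge(\sigma(K)+\alpha/2)t$ to complete the chosen $k-1$ clusters to an edge of $\mathcal R$ by a cluster whose $y$-value is still below roughly the $\sigma(K)t$-th smallest. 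Here the ratio $\sigma(K)$ — equivalently, the set $\mathcal S(K)$ defining it — enters precisely because the dual is infeasible exactly when $\delta_{k-1}(\mathcal R)$ beats the number of clusters the bottleneck class $W_1$ must occupy, which $\delta_{k-1}(\mathcal R)\ge(\sigma(K)+\alpha/2)t$ arranges. Rounding the resulting fractional tiling of $\mathcal R$ to rationals of denominator bounded in terms of $K$ and $\epsilon$ splits all but $o(t)$ of each cluster into blocks of sizes proportional to $a_1,\dots,a_k$ grouped into ``gadgets'', each gadget an $\epsilon$-regular density-$\ge d$ $k$-tuple of the correct size ratio; a standard greedy embedding (pull out one copy of $K$ at a time, staying regular and dense) tiles all but a small fraction of each gadget, giving a $K$-tiling of $H$ covering all but $o(n)$ vertices.

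The final step — which I expect to be the genuine obstacle — is to replace this $o(n)$ error by a constant $C(K)$ uniform in $\alpha$. The strategy is a clean-up phase re-using the codegree hypothesis: while the uncovered set $L$ still has linear size, every $(k-1)$-subset of $L$ has more than $n/2$ neighbours in $H$, so by supersaturation one can locate a copy of $K$ meeting $L$ and rearrange boundedly many tiles of the current tiling so as to strictly increase the coverage; iterating should push $|L|$ down to a constant depending only on $K$. Pinning this down — especially arguing that an augmentation remains available once $|L|$ has shrunk to a small but still unbounded multiple of $v_K$, with no dependence on $\alpha$ — is the delicate part; the same difficulty appears in Kom\'los-type tiling theorems for graphs, where it is known how to handle it, and I would expect that handling to carry over.
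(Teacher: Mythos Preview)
This lemma is not proved in the paper: it is quoted as Lemma~1.5 of Mycroft~\cite{complete k-partite} and used as a black box in the non-extremal case. So there is nothing in the paper to compare your sketch against; what follows assesses the sketch on its own terms.

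The regularity--fractional-tiling--blow-up outline is the standard route and will certainly deliver a $K$-tiling leaving only $o(n)$ vertices uncovered. One slip along the way: in the clean-up phase you assert that ``every $(k-1)$-subset of $L$ has more than $n/2$ neighbours in $H$'', but the hypothesis is only $\delta_{k-1}(H)\ge(\sigma(K)+\alpha)n$ with $\sigma(K)\le 1/k$, so the guaranteed codegree is at most $(1/k+\alpha)n$, nowhere near $n/2$ in general.

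The substantive gap is precisely where you place it: reducing the leftover from $o(n)$ to a constant $C(K)$ independent of $\alpha$. Your augmentation loop does not accomplish this. Finding a copy of $K$ that meets $L$ is easy by supersaturation; the trouble is the phrase ``rearrange boundedly many tiles of the current tiling so as to strictly increase the coverage''. A new copy of $K$ meeting $L$ in one vertex overlaps up to $v_K-1$ existing tiles, and deleting those frees roughly $(v_K-1)v_K$ previously covered vertices that must now be re-tiled --- nothing you have said prevents this from cascading or guarantees net progress. More fundamentally, once $|L|$ falls below $\alpha n$ the codegree hypothesis has no slack left (it is already the sharp threshold for a near-perfect fractional tiling), so any augmentation that relies on positive-density structure inside $L$ stalls at $|L|\approx\alpha n$, which is not uniform in $\alpha$. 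The gesture toward ``Koml\'os-type tiling theorems for graphs'' does not rescue this: the constant-error results of Koml\'os--S\'ark\"ozy--Szemer\'edi and successors work via a pre-built absorbing/exceptional structure together with the blow-up lemma, not by a post-hoc augmentation loop. To get the uniform $C(K)$ you will have to either import Mycroft's actual mechanism or replace the heuristic with something structurally sharper.
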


Lemmas \ref{extremalK_m} and \ref{extremalC_4} deal with the extremal case for $K^3(m)$ and $K_{m,m}^3$, respectively.

\begin{lem}\label{extremalK_m}
Let $m\geq2$ be an integer. There exist $\gamma>0$ and $n_0\in\mathbb{N}$ such that the following holds. Suppose that $H$ is a 3-graph on $n\geq n_0$ vertices with $\delta_2(H)\geq n/2+m^\frac{1}{m}n^{1-\frac{1}{m}}$, $n\in 3m\mathbb{N}$. If  $H$ is $\gamma$-extremal, then $H$ contains a $K^3(m)$-factor.
\end{lem}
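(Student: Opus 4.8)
The plan is to exploit the fact that, since $H$ is $\gamma$-extremal, there is a balanced partition $V(H)=A\cup B$ with $|A|\le|B|$ such that $H$ $\gamma$-contains $\mathcal{B}[A,B]$; that is, all but at most $\gamma n^3$ of the "allowed" triples (those meeting $A$ in one or three vertices) are present, while typically very few of the "forbidden" triples (those meeting $B$ in exactly one or two vertices, equivalently meeting $A$ in exactly one or two vertices — here I mean $|e\cap B|\in\{1,2\}$) appear. Combined with the strong codegree hypothesis $\delta_2(H)\ge n/2+m^{1/m}n^{1-1/m}$, one first cleans up the partition: move any vertex of $A$ that has abnormally low degree into the "wrong" side, and similarly for $B$, to obtain a refined partition $V=A'\cup B'$ in which every vertex of $A'$ behaves almost like a full $\mathcal{B}[A,B]$-vertex of class $A$ and every vertex of $B'$ like one of class $B$. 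Because $H$ is not merely asymptotically extremal but has codegree above $n/2$ by a genuine $n^{1-1/m}$ margin, this cleaning can only move $o(n)$ vertices, and afterwards the number of forbidden edges inside $A'\cup B'$, as well as the number of missing allowed edges, is tiny — say at most $n^{2-1/m}$ or so. The key structural point is that in $\mathcal{B}[A,B]$ every copy of $K^3(m)$ meets $B$ in an even number of vertices; this parity constraint, together with $|A'|\equiv|A|$ and $|B'|\equiv|B|$ modulo the handful of moved vertices, will dictate exactly how many copies of each "type" $(3j, 3m-3j)$ we must use in the factor, and in fact since $n\in 3m\mathbb{N}$ and we are free to choose $|A|,|B|$ with the right parities (as in Construction \ref{CONS: 1.1}), a consistent global count of types exists.

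Second, I would build the $K^3(m)$-factor greedily/inductively. A copy of $K^3(m)$ of type $(3m,0)$ lives entirely in $A'$, a copy of type $(3j,3m-3j)$ with $1\le j\le m$ has $3j$ vertices in $A'$ and $3m-3j$ in $B'$ with $3m-3j$ even — so $j$ has a fixed parity depending on $m$. Within $H[A']$ the codegree is at least $(1/2+o(1))|A'|\cdot(\text{correction})$; more importantly $H[A']$ is nearly complete as a $3$-graph after the cleaning (every allowed triple inside $A$ is present up to a sparse defect set), so a near-perfect $K^3(m)$-tiling of $A'$ follows from a simple direct argument — partition $A'$ into blocks of size $3m$, and greedily fit a clique avoiding the sparse set of missing edges, using that a sparse $3$-graph cannot block too many choices. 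The same works for the mixed copies: we need triples with two vertices in $A'$ and one in $B'$ (allowed) or two vertices in... — careful, a $K^3(m)$ with parts of sizes $a,b,c$ split across $A',B'$ uses edges of every type that splits the three parts one-vertex-per-part, and "allowed" means each such triple meets $A'$ in one or three vertices; so a mixed copy must have its three parts distributed among $A'$ so that at least one part is entirely in $A'$ or the three vertices of each transversal see an odd number of $A'$-vertices. I would handle this by choosing each $K^3(m)$ to have $j$ of its $m$-sets entirely in $A'$ and the remaining $m-j$ entirely in $B'$ — but then a transversal edge picks some vertices from $A'$-parts and some from $B'$-parts, and it is allowed iff it meets $A'$ in one or three vertices, i.e. iff $j\in\{1,3\}$ among the three chosen parts... this does not automatically hold. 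The correct device: since $K^3(m)$ itself contains copies of $K^3(2)$, and $C_4^3\subseteq K^3(2)$, the parity obstruction is exactly that each copy of $K^3(m)$ must use an even number of $B'$-vertices in each of its edges — impossible unless the edge has $0$ or $2$ vertices in $B'$. Thus a valid embedded $K^3(m)$ either sits entirely in $A'$, or has exactly one of its three parts inside $A'$ and the other two inside $B'$ (so every transversal edge meets $A'$ in exactly one vertex), or has all three parts in $A'$. Counting: a type-$(A'\text{-only})$ copy uses $3m$ vertices of $A'$; a "one part in $A'$" copy uses $m$ vertices of $A'$ and $2m$ of $B'$. So with $s$ copies of the first kind and $t$ of the second, $3ms+mt=|A'|$ and $2mt=|B'|$, which has a solution in nonnegative integers precisely because $|B'|\in 2m\mathbb{N}$ (arrange the cleaning and the original choice of $|B|$ to ensure this) and then $3ms+mt=|A'|$ with $t=|B'|/(2m)$ forces $3s+t=|A'|/m$; solvable iff $|A'|/m\equiv t\pmod 3$, which we arrange by choosing $|A|,|B|$ appropriately subject to $|A|+|B|=n\in 3m\mathbb{N}$.

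Third, once the target multiset of copy-types is fixed and consistent, I would assemble the factor by a routine greedy embedding: repeatedly pick an unused block of $3m$ vertices in $A'$ (resp. $m$ vertices in $A'$ and $2m$ in $B'$), and within it find the required complete $3$-partite configuration while avoiding the $o(n^{2-1/m})$ missing allowed edges; since each step only needs to avoid a vanishing fraction of potential edges and we have $\Theta(n)$ blocks' worth of freedom, a standard counting/defect argument (or a trivial application of the fact that a $K^3(m)$ is forced in a nearly complete balanced $3m$-vertex $3$-partite $3$-graph) completes the tiling. The main obstacle I anticipate is \emph{not} the embedding itself but the divisibility bookkeeping in the first and second steps: one must verify that after the cleaning the sizes $|A'|,|B'|$ can simultaneously satisfy all the congruences ($|B'|\in 2m\mathbb{N}$ and $|A'|/m$ in the right residue mod $3$) given only $n\in 3m\mathbb{N}$ and the freedom to pick $|A|,|B|$ — and crucially that the cleaning step moves a number of vertices that is both $o(n)$ \emph{and} of the exact residue needed to fix the congruences, which is where the precise margin $m^{1/m}n^{1-1/m}$ in the codegree hypothesis (rather than a generic $o(n)$) is used: it guarantees the defect is small enough that the parity/divisibility corrections can be absorbed by moving at most a constant number of "flexible" vertices between $A'$ and $B'$ near the end.
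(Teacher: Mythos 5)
Your outline of the extremal structure is broadly right, and you correctly pin down the crucial parity constraint: inside $\mathcal{B}[A,B]$, every copy of $K^3(m)$ must have each of its three $m$-classes entirely in $A$ or entirely in $B$ (a mixed class would force two triples differing by one $A$-vertex to both be allowed, impossible since $\{1,3\}$ contains no consecutive integers), so only types $(3m,0)$ and $(m,2m)$ exist. You also rightly flag that the divisibility bookkeeping is the real difficulty. But the mechanism you propose for fixing the parity is a genuine gap.

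You propose to ``arrange the cleaning and the original choice of $|B|$'' so that $|B'|\in 2m\mathbb{N}$, and then to absorb any residual mismatch by ``moving at most a constant number of flexible vertices between $A'$ and $B'$.'' Neither works. The cleaning is dictated by the degree profile of $H$, so you have no control over $|B'|\bmod 2m$. And moving a vertex $v$ from $B'$ to $A'$ requires $v$ to subsequently behave like an $A'$-vertex, i.e.\ have $\Theta(n^2)$ edges meeting $A'\cup\{v\}$ in one or three vertices; but those are precisely the non-$\mathcal{B}[A',B']$ edges at $v$, of which a typical $B'$-vertex has only $O(\gamma n^2)$. The ``flexibility'' simply isn't available.

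The actual mechanism (Claim~\ref{d-b-breaker for K_m} in the paper) is to locate a bounded number of genuine copies of $K^3(m)$ of off-types $(m+1,2m-1)$ or $(3m-1,1)$ in $H$ itself. Such a copy necessarily uses a forbidden triple, so it cannot live in $\mathcal{B}[A',B']$ alone; the point is that $\delta_2(H)\ge n/2 + m^{1/m}n^{1-1/m}$ forces every codegree to have an excess of about $m^{1/m}n^{1-1/m}$ that must be spent on forbidden triples, and this margin is calibrated so that after a double-counting argument some vertex $v$ has $\Theta(n^{2-1/m})$ suitable pairs in its link graph, whereupon the K\H ov\'ari--S\'os--Tur\'an bound (Lemma~\ref{EST}) yields a $K^2(m)$ in that link, which extends to the desired off-type $K^3(m)$. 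Removing $2m-t$ such copies (where $t\equiv|B'|\pmod{2m}$) restores $|B'|\equiv 0\pmod{2m}$, after which the type-count $(3m,0)$ vs.\ $(m,2m)$ becomes solvable. Your proposal gestures at ``the precise margin being used'' but never produces this step, which is where the specific exponent $1-1/m$ actually enters.

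There is a secondary gap: the final ``routine greedy embedding'' into the cleaned, nearly-complete $\mathcal{B}[A^*,B^*]$ is not routine. A vertex with few missing edges can still have them concentrated so as to block every candidate copy late in a greedy process. The paper handles this in two parts: a separate tiling $\mathcal{K}_2$ that first covers the residual bad vertices $A_2\cup B_2$, and then Lemma~\ref{good} applied to the purely good remainder, which in turn rests on the Lu--Sz\'ekely-based Lemma~\ref{good D} rather than on a block-by-block greedy argument.
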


\begin{lem}\label{extremalC_4}
There exist $\gamma>0$ and $n_0\in\mathbb{N}$ such that the following holds. Suppose that $H$ is a 3-graph on $n\geq n_0$ vertices with $\delta_2(H)$ satisfying (\ref{codegree condition}), where $n\in 3m\mathbb{N}$. If $H$ is $\gamma$-extremal, then $H$ contains a $K_{m,m}^3$-factor.
\end{lem}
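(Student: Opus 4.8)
The plan is the standard extremal-case argument: from $\gamma$-extremality extract a near-ideal bipartition $V(H)=A\cup B$, swallow the few exceptional vertices into $o(n)$ copies of $K^3_{m,m}$ using the large codegree, and tile the ideal remainder by ``model'' copies of $K^3_{m,m}$, invoking the exact constant in (\ref{codegree condition}) only to cure one parity obstruction. By $\gamma$-extremality there is a partition $V(H)=A\cup B$ with $|A|\le|B|$ and $|E(\mathcal B[A,B])\setminus E(H)|\le\gamma n^3$. In $\mathcal B[A,B]$ the link of $v\in A$ is $K(A\setminus\{v\})\cup K(B)$ and the link of $v\in B$ is the complete bipartite graph between $A$ and $B\setminus\{v\}$, so $d_H(uv)$ is close to $|A|-2$, $|B|-1$, $|A|$ according to whether $uv\subseteq A$, $uv$ crosses, or $uv\subseteq B$. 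I would first clean the partition: put $v$ in $A^{\ast}$ (resp.\ $B^{\ast}$) if the link of $v$ in $H$ differs from $K(A)\cup K(B)$ (resp.\ from the complete bipartite graph) in at most $\sqrt\gamma\,n^{2}$ pairs, and leave the remaining, at most $O(\sqrt\gamma\,n)$, vertices exceptional; a second pass lets one assume in addition that each $v\in A^{\ast}$ lies in all but $o(n^{2})$ of the ideal triples inside $A^{\ast}$ and inside $B^{\ast}$, each $v\in B^{\ast}$ lies in all but $o(n^{2})$ of the ideal crossing triples, and $|A^{\ast}|,|B^{\ast}|$ lie within $O(\sqrt\gamma\,n)$ of $n/2$. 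The key consequence of (\ref{codegree condition}) is that it exceeds $\delta_2(\mathcal B[A^{\ast},B^{\ast}])$ by at least $1$: unravelling this, every pair attaining the minimum degree in $\mathcal B[A^{\ast},B^{\ast}]$ (the pairs inside $A^{\ast}$ when $|A^{\ast}|\le|B^{\ast}|$, and otherwise the crossing pairs) has, in $H$, an extra incident triple, and all such extra triples are of type $A^{\ast}A^{\ast}B^{\ast}$; thus $\Omega(n^2)$ of the $A^{\ast}$-pairs have a neighbour in $B^{\ast}$, and if $|A^{\ast}|$ is appreciably below $n/2$ then every $A^{\ast}$-pair has many.

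Next I fix the repertoire. Every copy of $K^{3}_{m,m}$ inside $\mathcal B[A,B]$ meets $B$ in an even number of vertices (as already noted in the paper): either the $m$-class sits in $B$ and all $m$ pairs are split, using $m$ vertices of $A$; or the $m$-class sits in $A$, $j$ pairs sit in $A$ and $m-j$ in $B$, using $m+2j$ vertices of $A$. Hence a $K^{3}_{m,m}$-factor assembled from such ``model'' copies uses a number of $A$-vertices that is $\equiv n/3\pmod 2$ and can be prescribed to be any value of that parity in $[n/3,n]$; since $|A^{\ast}|=n/2+O(\sqrt\gamma\,n)$ lies in this interval, the only arithmetic obstruction is the parity of $|B^{\ast}|$. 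Since $\delta_{2}(H)\ge(1/2-o(1))n$, any prescribed set of $O(\sqrt\gamma\,n)$ vertices can be absorbed into vertex-disjoint copies of $K^{3}_{m,m}$ a bounded number at a time (pair co-neighbourhoods have size $(1/2-o(1))n$, ample room to grow the $m$ pairs and the $m$-class of a copy vertex by vertex), so I first remove all exceptional vertices in $O(\sqrt\gamma\,n)$ copies, leaving near-ideal classes $A'\cup B'$ with $|A'|=n/2+O(\sqrt\gamma\,n)$.

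If $|B'|$ is even, I tile $H[A'\cup B']$ greedily by model copies, using $O(1)$ copies with $j\ge1$ so that the number of $A'$-vertices used equals $|A'|$ exactly, and picking the vertices of each copy around the $o(n^2)$ ``missing'' ideal triples --- routine, since the classes are near-complete in the needed sense. If $|B'|$ is odd I must first build one copy with odd $B'$-intersection, which forces at least $m$ non-$\mathcal B$ edges. I would fix $y^{\ast}\in A'$ and seek $m$ distinct $a_{1},\dots,a_{m}\in A'$ together with a \emph{distinct} ``extra'' neighbour $b_{i}\in B'$ of each pair $\{y^{\ast}a_{i}\}$; then $\{a_i\}\cup\{b_i\}$ extends to a copy of $K^{3}_{m,m}$ with $m$-class $\{y^{\ast},y_2,\dots,y_m\}\subseteq\{y^{\ast}\}\cup B'$, which uses $m+1$ vertices of $A'$ and thus an odd number of $B'$. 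Since most of the $\approx|A'|$ pairs through $y^{\ast}$ have an extra $B'$-neighbour and $|A'|\gg m$, such a system of distinct representatives exists unless the extra neighbours of all pairs through $y^{\ast}$ lie in a bounded set $Z\subseteq B'$; if that degenerate situation occurs for \emph{every} $y^{\ast}$, then the sets $Z_{u}$ ($u\in A'$) form a pairwise-intersecting family of bounded sets, so some $b^{\ast}\in B'$ is an extra neighbour of $\Omega(n)$ (in the extreme case, all) pairs of $A'$, and a copy of $K^{3}_{m,m}$ with all $m$ pairs and $m-1$ further vertices inside $A'$ together with $b^{\ast}$ again has $B'$-intersection $1$, using only ideal $A'A'A'$ triples and extra triples through $b^{\ast}$. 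In every case, removing this single special copy makes $|B'|$ even, and we finish as above.

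The hard part will be exactly this parity-correction step in the tightest regime, $|A|=|B|=n/2$ with $n\equiv2\pmod4$ (and the companion $n\equiv1\pmod4$ with $|B|=\lceil n/2\rceil$ odd), where (\ref{codegree condition}) beats $\delta_{2}(\mathcal B[A,B])$ by exactly one and hence guarantees only a \emph{single} extra neighbour per minimum-degree pair. Turning these scattered extra triples into one valid copy of $K^{3}_{m,m}$ with odd $B$-intersection --- that is, ruling out, using the $\gamma$-extremal structure, the configurations in which the extra $A^{\ast}A^{\ast}B^{\ast}$ triples are so concentrated that neither the SDR construction nor the ``copy on $A'\cup\{b^{\ast}\}$'' construction applies --- is the one place where the precise constant in (\ref{codegree condition}), not merely $\delta_{2}(H)=(1/2+o(1))n$, is needed; and it is, fittingly, exactly the step that fails for Construction \ref{CONS: 1.1}, where $\delta_2$ is one smaller and $|B|$ is odd.
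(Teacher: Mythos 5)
Your outline is essentially the paper's proof: clean the $\gamma$-extremal bipartition, swallow the $O(\sqrt\gamma\,n)$ exceptional vertices into small copies of type $(m,2m)$, find one or two ``parity-breaking'' copies of type $(m+1,2m-1)$ or $(3m-1,1)$ using the fact that (\ref{codegree condition}) exceeds $\delta_2(\mathcal B[A',B'])$ by at least one, and tile the near-ideal remainder. You correctly identify both the parity invariant ($|V(K^3_{m,m})\cap B|$ is always even inside $\mathcal B[A,B]$) and the reason the exact constant is needed (it supplies an extra $A'A'B'$ triple per minimum-degree pair), and your SDR / ``concentrated extra neighbour'' dichotomy is morally the paper's case split (Claim~\ref{d-b-breaker for C_4}): either some pair in $A'$ has $\Omega(n)$ extra $B'$-neighbours, giving a $(m+1,2m-1)$ copy, or all of them have few, and a counting argument produces a vertex $b^*\in B'$ whose link has $\Omega(n)$ $A'A'$-pairs, giving a $(3m-1,1)$ copy.

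Two of your ``routine'' steps are not routine with the tools you invoke, however. (i) You cannot absorb an exceptional vertex by growing a copy of $K^3_{m,m}$ ``vertex by vertex'' from codegree $(1/2-o(1))n$ alone: for $m\ge 3$ the common codegree neighbourhood of $m$ chosen $Y$-vertices can be empty, since $m\cdot(1/2)-(m-1)<0$. The paper instead places $x$ into a copy of the fixed type $(m,2m)$ by applying K\H ov\'ari--S\'os--Tur\'an to the (dense, bipartite-cleaned) link of $x$ and then extending via the near-complete bipartite structure --- it relies on the extremal structure of the good vertices, not just codegree. (ii) Tiling the ideal remainder so that it uses exactly $|A'|$ $A'$-vertices is not a greedy exercise; the paper delegates it to Lemma~\ref{good}, which in turn relies on a Lov\'asz-local-lemma based perfect-packing theorem of Lu--Sz\'ekely (via Lemma~\ref{good D}), after a further divisibility-correcting set $\mathcal K_3$ makes $|A^*|=|B^*|\equiv 0\pmod{6m}$. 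Neither gap is an obstruction to the approach --- you simply need to import the same two lemmas the paper uses --- but as written those steps are claimed rather than proved.
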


\medskip

\noindent{\it\bf  Proof of Theorems \ref{mainK_m} and \ref{mainC4}:} Let $0<\alpha\ll1$ and $1/n\ll\epsilon_2\ll\epsilon_1\ll\gamma\ll 1$ with $n\in3m\mathbb{N}$. Let $H$ be a 3-graph of order $n$ with $\delta_2(H)\geq n/2+m^\frac{1}{m}n^{1-\frac{1}{m}}$ (resp. $\delta_2(H)$ satisfying (\ref{codegree condition})).

I. $H$ is $3\gamma$-extremal. Then, by Lemma \ref{extremalK_m},  $H$ contains a $K^3(m)$-factor (resp.  $K_{m,m}^3$-factor by Lemma~\ref{extremalC_4}).

II. $H$ is not $3\gamma$-extremal.
Note that $K_{m,m}^3$ is a spanning subgraph of $K^3(m)$ by the definition of $K_{m,m}^3$. If $H$ has a $K^3(m)$-factor then it also contains a $K_{m,m}^3$-factor.   By Lemma \ref{absorption}, we can choose  an absorbing set $W\subset V(H)$ with $|W|\le\epsilon_1 n$ and $|W|\in 3m \mathbb{N}$ so that for any $U\subset V(H)\setminus W$ with $|U|\leq\epsilon_2 n$ and $|U|\in 3m \mathbb{N}$, both $H[W]$ and $H[U\cup W]$ contain $K^3(m)$-factors.
Let $H'$ be the 3-graph obtained from $H$ by deleting the vertices of $W$. Then $|V(H')|=n'\geq(1-\epsilon_1)n$ and $\delta_2(H')\geq n/2-1-\epsilon_1 n\geq (1/3+\alpha)n'$. {Note that $\sigma(K^3(m))=1/3$.} The codegree condition in Lemma~\ref{almost packing} for $H'$ and $K^3(m)$ is satisfied. By Lemma \ref{almost packing}, $H'$ contains a $K^3(m)$-tiling $M_1$ covering all but at most $C$ vertices. Let $U=V(H')\setminus V(M_1)$. Then $|U|=n-|W|-|V(M_1)|\in 3m\mathbb{N}$ and $|U| \leq C\leq \epsilon_2 n$. Hence $H[U\cup W]$ contains a $K^3(m)$-factor $M_2$. Then $M_1\cup M_2$ is a $K^3(m)$-factor in $H$. We are done. \quad \rule{1mm}{2mm}

\medskip

The rest of the paper is organized as follows. In Section 3, we give the proof of the absorption lemma used in the paper, i.e. Lemma \ref{absorption}, and in Section 4, we deal with the extremal case, i.e. we prove Lemmas~\ref{extremalK_m} and~\ref{extremalC_4}.

\section{Absorption lemma}
To prove the absorption lemma, we need some preliminaries.
Let $H=(V, E)$ be a $k$-graph of order $n$, and $F$ be a $k$-graph of order $t$. Given an integer $i\geq1$, a constant $\eta>0$, and two vertices $x, y\in V$, a vertex set $S\subset V$ is called an {\it $(x,y)$-connector} of length $i$ with respect to $F$ if $S\cap\{x,y\}=\emptyset$, $|S|=ti-1$ and both $H[S\cup\{x\}]$ and $H[S\cup\{y\}]$ contain $F$-factors.
Two vertices $x$ and $y$ are called {\it $(i,\eta)$-close} with respect to $F$ if there exist at least $\eta n^{ti-1}$ $(x, y)$-connectors of length $i$ with respect to $F$ in $H$. Let
$$\tilde{N}_{F,i,\eta}(x)=\{y : x \mbox{ and } y \mbox{ are } (i,\eta)\mbox{-close with respect to } F\}.$$
 A subset $U\subset V$ is said to be $(F, i,\eta)$-closed in $H$ if any two vertices in $U$ are $(i,\eta)$-close with respect to $F$. If $V$ is $(F, i,\eta)$-closed in $H$ then we simply say that $H$ is $(F, i, \eta)$-closed.

The following lemma given by Lo and Markstr\"{o}m \cite{absorption} referred to as absorption lemma provides a absorbing set for any sufficiently small vertex set if $H$ is $(F, i, \eta)$-closed.

\begin{lem}[Lemma 1.1 in \cite{absorption}]\label{abp}
Let $t$ and $i$ be positive integers and $\eta>0$. Then there exist $\eta_1,\eta_2$ such that $0<\eta_2\ll\eta_1\ll\eta$ and an integer $n_0=n_0(i,\eta)$ satisfying the following: Suppose that $F$ is a $k$-graph of order $t$ and $H$ is an $(F,i,\eta)$-closed $k$-graph of order $n\geq n_0$. Then there exists a vertex subset $U\subset V(H)$ of size at most $\eta_1 n$ with {$|U|\in t\mathbb{Z}$} such that, for every vertex set $W\subset V\setminus U$ of size at most $\eta_2 n$ with {$|W|\in t\mathbb{Z}$}, both $H[U]$ and $H[U\cup W]$ contain $F$-factors.

\end{lem}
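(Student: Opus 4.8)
The plan is to follow the absorbing method of R\"odl, Ruci\'nski and Szemer\'edi in the form of Lo and Markstr\"om. I would first show that, thanks to $(F,i,\eta)$-closedness, every $t$-subset $T$ of $V(H)$ admits many small ``absorbers'' --- sets $A$ disjoint from $T$ with the property that both $H[A]$ and $H[A\cup T]$ contain $F$-factors --- and then extract from a random family of $c$-sets a bounded-size subfamily $\mathcal{F}'$ of pairwise disjoint absorbers; the union $U$ of $\mathcal{F}'$ will be the desired absorbing set, and the divisibility conditions will be arranged by making every absorber have size a multiple of $t$.

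For the first step I would note that closedness already forces a positive density of copies of $F$: fixing a vertex $x$, each of the at least $\eta n^{ti-1}$ connectors $S$ of length $i$ at $x$ contains (through the $F$-factor of $H[S\cup\{x\}]$) a copy of $F$ on $x$ together with $t-1$ vertices of $S$, while a fixed such copy lies in at most $n^{t(i-1)}$ connectors; hence $H$ has at least $\eta n^{t-1}$ copies of $F$ through each vertex, and in particular at least $\beta_0 n^t$ copies in total for some $\beta_0=\beta_0(t,i,\eta)>0$. Given $T=\{x_1,\dots,x_t\}$, I would build an absorber for $T$ by picking an ordered $t$-tuple $(w_1,\dots,w_t)$ spanning a copy of $F$ and avoiding $T$, and then, for each $j\in[t]$, a connector $S_j$ of length $i$ for the pair $(w_j,x_j)$ --- abundant because $(F,i,\eta)$-closedness makes every pair $(i,\eta)$-close --- while keeping $T$, $\{w_1,\dots,w_t\}$ and $S_1,\dots,S_t$ pairwise disjoint. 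The set $A:=\bigcup_{j\in[t]}(S_j\cup\{w_j\})$ then has the constant size $c:=t^2 i$; the union of the $F$-factors of the $H[S_j\cup\{w_j\}]$ is an $F$-factor of $H[A]$, while the union of the $F$-factors of the $H[S_j\cup\{x_j\}]$ together with the copy of $F$ on $\{w_1,\dots,w_t\}$ is an $F$-factor of $H[A\cup T]$, so $A$ is a $T$-absorber. Counting greedily --- $\Omega(n^t)$ choices for the copy of $F$, then $\Omega(n^{ti-1})$ choices for each $S_j$ after discarding the $O(1)$ vertices already in use, with the passage from tuples to sets boundedly many-to-one --- shows that $T$ has at least $\beta n^{c}$ distinct absorbers for some $\beta=\beta(t,i,\eta)>0$.

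For the second step I would take $\mathcal{F}$ to be the random family obtained by including each $c$-subset of $V(H)$ independently with probability $p:=\mu n^{1-c}$, where $\mu=\mu(t,i,\eta)$ is a small constant. A routine first-moment and concentration argument --- Chernoff bounds for $|\mathcal{F}|$ and for the number of absorbers of each fixed $t$-set (followed by a union bound over the at most $n^t$ such sets), and Markov's inequality for the number of intersecting pairs of members of $\mathcal{F}$ --- gives, with positive probability, a family $\mathcal{F}$ with $|\mathcal{F}|\le 2\mu n$, with at least $\mu\beta n/2$ absorbers of each $t$-set, and with at most $C_0\mu^2 n$ intersecting pairs. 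Deleting one set from each intersecting pair produces a family $\mathcal{F}'$ of pairwise disjoint $c$-sets in which, provided $\mu$ is small enough that $C_0\mu^2<\mu\beta/4$, every $t$-set still has at least $\mu\beta n/4$ absorbers. Setting $U:=\bigcup_{A\in\mathcal{F}'}A$ gives $|U|=c\,|\mathcal{F}'|\le 2c\mu n$, which is at most $\eta_1 n$ with $\eta_1:=2c\mu$ (take $\mu$ also small enough that $\eta_1\ll\eta$) and is divisible by $t$ since $t\mid c$; and $H[U]$ has an $F$-factor, the disjoint union of the $F$-factors of the individual $H[A]$. Finally, choosing $\eta_2\ll\eta_1$ small enough that $\eta_2 n/t<\mu\beta n/4$, any $W\subseteq V(H)\setminus U$ with $|W|\le\eta_2 n$ and $t\mid|W|$ can be partitioned into $t$-sets $W_1,\dots,W_r$ with $r=|W|/t$, and one can greedily pick distinct $A_1,\dots,A_r\in\mathcal{F}'$ with $A_j$ a $W_j$-absorber (fewer than $r$ members of $\mathcal{F}'$ are ever in use, while each $W_j$ has more than $r$ absorbers in $\mathcal{F}'$); swapping the $F$-factor of each $H[A_j]$ for one of $H[A_j\cup W_j]$ and keeping the $F$-factors of the remaining $H[A]$ yields an $F$-factor of $H[U\cup W]$.

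I expect the main obstacle to be the absorber count: one must extract from the single hypothesis of $(F,i,\eta)$-closedness both a positive density of copies of $F$ and, simultaneously, $t$ vertex-disjoint connectors joining such a copy to the arbitrary target $T$, and then preserve this count through the random deletion without damaging either the $F$-factor property of $H[U]$ or the absorbing property of $\mathcal{F}'$. By contrast, the divisibility requirements $|U|,|W|\in t\mathbb{Z}$ come for free once every absorber has size $c\in t\mathbb{Z}$, and the rest is routine manipulation of the small parameters $\mu$, $\eta_1$, $\eta_2$ against $\eta$.
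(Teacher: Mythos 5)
The paper does not prove this lemma at all: it is imported verbatim from Lo and Markstr\"om (Lemma 1.1 in \cite{absorption}), so there is no in-paper proof to compare against. Your argument is a correct and essentially faithful reconstruction of the original: you derive a positive density of $F$-copies from closedness, build constant-size ($c=t^2i$) absorbers for each $t$-set $T$ by routing $t$ vertex-disjoint connectors from $T$ to a free copy of $F$, verify that $\Omega(n^c)$ of them exist, and then extract a pairwise disjoint bounded family by random sparsification at density $\Theta(n^{1-c})$ plus the standard Chernoff/Markov/union-bound cleanup, which at once gives the $F$-factor of $H[U]$ and the greedy absorption of any small $W$ with $t\mid |W|$. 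The divisibility $t\mid c$ handles $|U|\in t\mathbb{Z}$ exactly as you say. One tiny presentational slip: in the density-of-copies step you speak of ``connectors of length $i$ at $x$,'' but connectors are defined for pairs, so one should fix an arbitrary second vertex $y\ne x$ before invoking $(i,\eta)$-closeness; this changes nothing substantive.
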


Lemma \ref{closed pair} also given in \cite{absorption} allows us to find close pairs with respect to a $k$-partite $k$-graph $F$.

\begin{lem}[Lemma 4.2 in \cite{absorption}]\label{closed pair}
Let $k\geq2$ be an integer and $\alpha>0$. Given a $k$-partite $k$-graph $F$, there exist a constant $\eta_0=\eta_0(k,F,\alpha)>0$ and an integer $n_0=n_0(k,F,\alpha)$ such that the following holds: Let $H$ be a $k$-graph of order $n\geq n_0$ and $x, y\in V(H)$. If
$$\left|\{ S \,|\, S\in N(x)\cap N(y)  \mbox{ with } |N(S)|\geq\alpha n\}\right|\ge\alpha\binom{n}{k-1},$$ then $x$ and $y$ are $(F,1,\eta)$-close for all $0<\eta\leq\eta_0$.
\end{lem}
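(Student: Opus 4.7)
The plan is to lower-bound the number of $(x,y)$-connectors of length $1$ with respect to $F$. Writing $t=|V(F)|$, such a connector is a $(t-1)$-set $T\subseteq V(H)\setminus\{x,y\}$ for which both $H[T\cup\{x\}]$ and $H[T\cup\{y\}]$ admit $F$-factors, i.e., span copies of $F$. Fixing an arbitrary vertex $w^*\in V(F)$, it suffices to exhibit at least $\eta_0(t-1)!\cdot n^{t-1}$ injections $\phi\colon V(F)\setminus\{w^*\}\hookrightarrow V(H)\setminus\{x,y\}$ satisfying (a) $\phi(e)\in E(H)$ for every $e\in E(F)$ with $w^*\notin e$, and (b) $\phi(e\setminus\{w^*\})\in N_H(x)\cap N_H(y)$ for every $e\in E(F)$ with $w^*\in e$, since each unordered $T$ accounts for at most $(t-1)!$ such injections.

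Let $\mathcal{G}$ denote the ``good'' $(k-1)$-sets supplied by the hypothesis: $|\mathcal{G}|\ge\alpha\binom{n}{k-1}$, and for each $S\in\mathcal{G}$ one has $S\cup\{x\},S\cup\{y\}\in E(H)$ and $|N_H(S)|\ge\alpha n$. I would construct $\phi$ in two stages. First, I embed $U:=N_F(w^*)$, strengthening (b) to $\phi(e\setminus\{w^*\})\in\mathcal{G}$ for every $e\in E(F)$ with $w^*\in e$; this is exactly the task of embedding the link hypergraph $F_{w^*}$ (a fixed $(k-1)$-uniform hypergraph on $|U|$ vertices) into the dense $(k-1)$-graph $\mathcal{G}$ on $V(H)\setminus\{x,y\}$. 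Since $\mathcal{G}$ has density $\ge\alpha$ and $F_{w^*}$ is of bounded size, standard hypergraph supersaturation (or a direct greedy edge-count) yields at least $c_1 n^{|U|}$ injective such embeddings of $U$, with $c_1=c_1(F,\alpha)>0$.

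Next, I extend each such partial embedding greedily to the remaining vertices $v_1,\ldots,v_{t-1-|U|}$ of $V(F)\setminus(\{w^*\}\cup U)$. When $v_j$ is placed, each edge $e\in E(F)$ completed by $v_j$ is of one of two types. If $w^*\in e$, then $\phi(e\setminus\{w^*\})$ already lies in $\mathcal{G}$ by construction, so $|N_H(\phi(e\setminus\{w^*\}))|\ge\alpha n$ yields $\ge\alpha n-t$ valid candidates for $\phi(v_j)$. If $w^*\notin e$, the already-embedded $(k-1)$-set $\phi(e\setminus\{v_j\})$ may a priori have small codegree in $H$; here I would bound the number of injections violating any such constraint by a union bound, showing that a positive fraction of the $c_1 n^{|U|}$ embeddings of $U$ extend to $\ge c_2 n^{t-1-|U|}$ valid completions. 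Multiplying through and choosing $\eta_0\le c_1 c_2/(t-1)!$ delivers the desired $(F,1,\eta_0)$-closeness of $x$ and $y$.

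The hard part will be controlling the edges of $F$ not containing $w^*$, because the hypothesis gives codegree bounds only for $(k-1)$-sets in $\mathcal{G}$. The resolution is to choose an extension order so that every such edge is ``anchored'' at vertices whose neighborhoods are already constrained by $\mathcal{G}$, or alternatively to propagate the codegree bound inductively along the vertices of $F\setminus\{w^*\}$; in either case the loss at each step is only a multiplicative constant depending on $F$ and $\alpha$, so $\eta_0$ can be taken of the form $\alpha^{C(F)}$.
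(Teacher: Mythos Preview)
First, note that the paper does not prove this lemma: it is quoted from Lo and Markstr\"om \cite{absorption}, so there is no in-paper argument to compare against. Your overall strategy---count embeddings of $F-w^*$ whose link-edges land in $\mathcal{G}$ and deduce many connectors---is the right one, and Stage~1 is fine.

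The gap is in Stage~2. Take without loss $F=K^k(m_1,\ldots,m_k)$ (any $(x,y)$-connector for this is one for the original $F$) and $w^*\in V_1$. Then $U=V_2\cup\cdots\cup V_k$, and the remaining vertices are exactly $V_1\setminus\{w^*\}$. For each such $w'$ every edge $e\ni w'$ has $w^*\notin e$ (your ``first type'' is vacuous here), and $e\setminus\{w'\}$ is a transversal of $V_2\times\cdots\times V_k$, so $\phi(e\setminus\{w'\})\in\mathcal G$ and $|N_H(\phi(e\setminus\{w'\}))|\ge\alpha n$. The problem is that you must place $\phi(w')$ in the \emph{intersection} of all $M:=\prod_{i\ge2}m_i$ such neighbourhoods, and the hypothesis gives no lower bound on that intersection for a fixed Stage-1 embedding; it can be empty for a constant fraction of them. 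A per-edge union bound loses $(1-\alpha)n$ candidates per constraint and says nothing once $M(1-\alpha)\ge1$, and no ``extension order'' or ``inductive propagation'' helps, since all $M$ constraints on $w'$ fire simultaneously and the lemma imposes no global codegree condition on $H$.

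The missing idea is to run supersaturation at the $k$-uniform level rather than the $(k-1)$-uniform level. Form the $k$-partite $k$-graph $G^*$ on $k$ disjoint copies $P_1,\ldots,P_k$ of $V(H)\setminus\{x,y\}$, with $((z_1,1),\ldots,(z_k,k))\in E(G^*)$ whenever $\{z_2,\ldots,z_k\}\in\mathcal G$ and $\{z_1,\ldots,z_k\}\in E(H)$. The hypothesis gives $e(G^*)\ge(k-1)!\,|\mathcal G|\,(\alpha n-2)\ge c(\alpha,k)\,n^k$, so one application of Erd\H{o}s--Simonovits supersaturation (Lemma~\ref{supersaturation}) yields $\ge\mu\,n^{t-1}$ partite copies of $K^k(m_1-1,m_2,\ldots,m_k)$ in $G^*$, each projecting (after discarding $O(n^{t-2})$ degenerate cases) to an $(x,y)$-connector. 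Equivalently, if you want to keep two stages, the argument you need is the averaging identity
\[
\sum_{z_1,\ldots,z_{m_1-1}}\Bigl|\bigcap_{j}\mathcal G_{z_j}\Bigr|=\sum_{S\in\mathcal G}|N_H(S)|^{m_1-1}\ge|\mathcal G|(\alpha n)^{m_1-1},
\]
where $\mathcal G_z=\{S\in\mathcal G:z\in N_H(S)\}$; this forces a positive proportion of $(m_1-1)$-tuples to have $\bigl|\bigcap_j\mathcal G_{z_j}\bigr|\ge c\binom{n}{k-1}$, and then supersaturation in that dense $(k-1)$-graph supplies the embedding of $U$.
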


The following lemma in \cite{partition} gives us a partition of $V(H)$  with bounded number of parts such that each of them is closed with respect to $F$.

\begin{lem}[Lemma 6.3 in \cite{partition}]\label{partition}
Given $\delta>0$, integers $c, k, t\geq2$ and $0<\eta\ll1/c,\delta, 1/t$, there exists a constant $\eta'>0$ such that the following holds for all sufficiently large $n$: Let $F$ be a $k$-graph on $t$ vertices. Assume a $k$-graph $H$ on $n$ vertices satisfies that $|\tilde{N}_{F,1,\eta}(v)|\geq \delta n$ for any $v\in V(H)$ and every set of $c+1$ vertices in $V(H)$ contains two vertices that are $(F,1,\eta)$-close. Then we can find a partition of $V(H)$ into $V_1,\ldots,V_r$ with $r\leq \min\{c, 1/\delta'\}$ such that for any $i\in[r], |V_i|\geq(\delta-\eta)n$ and $V_i$ is $(F, 2^{c-1},\eta')$-closed in $H$.
\end{lem}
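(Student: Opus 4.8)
I would prove Lemma~\ref{partition} by combining one quantitative engine — a ``transitivity of closeness'' — with a greedy--merging construction of the partition. The degree-type hypothesis $|\tilde N_{F,1,\eta}(v)|\ge\delta n$ is used to force the parts to be large (and then $r\le1/\delta'$, for $\delta'$ just below $\delta$, follows merely from disjointness of the parts), while the Ramsey-type hypothesis — that among any $c+1$ vertices of $V(H)$ two are $(F,1,\eta)$-close — is used to cap the number of parts at $c$.

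The engine rests on the fact that connectors concatenate through a hub: if $A$ is an $(x,w)$-connector of length $i$ and $B$ a $(w,y)$-connector of length $j$ with $A,B$ and $\{x,y,w\}$ pairwise disjoint, then $A\cup\{w\}\cup B$ is an $(x,y)$-connector of length $i+j$, obtained by gluing the two $F$-factors along $w$. Passing through one fixed hub $w$ produces only $\Omega(n^{t(i+j)-2})$ such sets, a factor of $n$ short of the $\eta'' n^{t(i+j)-1}$ that $(i+j,\eta'')$-closeness demands; the missing factor is recovered by letting $w$ range over a \emph{linear-sized} set of hubs. So the engine reads: if $x$ is $(F,i,\beta)$-close and $y$ is $(F,j,\beta)$-close to every vertex of a common set $Z$ with $|Z|\ge\rho n$, then $x$ and $y$ are $(F,i+j,\beta')$-close for some $\beta'=\beta'(\beta,\rho,i,j,t)>0$. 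I would also record the monotonicity ``$(F,i,\beta)$-close $\Rightarrow(F,i',\beta')$-close'' for $i'\ge i$, obtained by padding a connector with a fresh disjoint copy of $F$; such copies abound, since the degree hypothesis already puts every vertex into $\Omega(n^{t-1})$ spanning copies of $F$, whence $H$ has $\Omega(n^{t})$ of them. In particular every $(F,1,\eta)$-close pair is $(F,2,\eta_1)$-close.

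With the engine in hand, set $u\sim v$ to mean that $u,v$ are $(F,1,\eta)$-close or that $|\tilde N_{F,1,\eta}(u)\cap\tilde N_{F,1,\eta}(v)|\ge\delta n/2$; in either case $u\sim v$ implies, via the engine (with $Z$ the common closeness-neighbourhood) together with the monotonicity, that $u,v$ are $(F,2,\eta_1)$-close. Pick a maximal family $u_1,\dots,u_r$ of pairwise-$\not\sim$ vertices; these are in particular pairwise non-$(F,1,\eta)$-close, so $r\le c$, and by maximality every vertex is $\sim$ to some $u_a$. One then organises $V(H)$ into $\le c$ parts by merging smaller already-closed sets that are linked to one another through the linear closeness-neighbourhoods attached to the $u_a$; arranging these merges so that no part's length parameter is doubled more than $c-1$ times, the resulting parts are $(F,2^{c-1},\eta')$-closed. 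A concluding cleanup enforces $|V_a|\ge(\delta-\eta)n$: were some $V_a$ smaller, a vertex $v\in V_a$ would have at least $\eta n$ of its $(F,1,\eta)$-neighbours outside $V_a$, hence at least $\eta n/c$ inside one other part $V_b$, and the engine — with linear hub-set $\tilde N_{F,1,\eta}(v)\cap V_b$ lying inside the closed set $V_b$ — shows $v$ is close to all of $V_b$ at a bounded scale, so $v$ may be moved into $V_b$; iterating either enlarges every part past $(\delta-\eta)n$ or lowers $r$.

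The step I expect to be the main obstacle is that closeness — and hence the auxiliary relation $\sim$ — is \emph{not} transitive: two $\delta n/2$-sized subsets of a $\delta n$-sized set may be essentially disjoint, so ``every vertex of a block is close to a fixed representative'' does not by itself make the block closed. This forces every merge to be routed through the engine's \emph{linear} hub-sets (or through robustly chosen subsets of them on which all the relevant pairwise intersections cover, say, at least $0.9$ of the whole), and it forces one to track simultaneously the chain of deteriorating parameters $\eta\gg\eta_1\gg\cdots\gg\eta'$ lost at each concatenation and the length parameter, so that the latter never exceeds $2^{c-1}$. The remaining points — the abundance of spanning copies of $F$ behind the monotonicity, and the disjointness/overcounting estimates inside the concatenation counts — are routine.
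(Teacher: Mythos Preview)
The paper does not prove this lemma: it is quoted verbatim as Lemma~6.3 of Han--Treglown~\cite{partition} and is used as a black box inside the proof of Lemma~\ref{absorption}. There is therefore no ``paper's own proof'' to compare your proposal against.

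That said, your sketch is a reasonable outline of how such partition lemmas are actually established in the absorption literature (see e.g.\ Lo--Markstr\"om~\cite{absorption} and Han--Treglown~\cite{partition}). The two load-bearing ingredients you identify --- the concatenation/transitivity engine (gluing an $(x,w)$-connector and a $(w,y)$-connector at a hub $w$ and then summing over a linear-sized hub set) and the monotonicity-by-padding step --- are exactly the standard tools, and your use of the Ramsey-type hypothesis to bound the number of pairwise non-close representatives by $c$ is the correct way to get $r\le c$. The one place your write-up is genuinely underspecified is the merge procedure: you assert that the merges can be arranged so that no part's length parameter is doubled more than $c-1$ times, but you do not say what the merges are or why that doubling bound holds. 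In the standard argument one does not merge arbitrary closed sets; rather one first builds the parts as the sets $\tilde N_{F,1,\eta}(u_a)$ attached to the representatives (assigning each remaining vertex to a part containing at least a $\delta/c$-fraction of its $\eta$-neighbourhood), and the $2^{c-1}$ arises because closing a single part requires iterating the engine at most $c-1$ times along a chain of representatives. Making this explicit would remove the only real gap in your plan.
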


Actually here we use a variant absorbing method which is so-called lattice-based absorption developed by Han in~\cite{Han-Trans17}. The following definitions are introduced by Keevash and Mycroft~\cite{exact t_2 for K4}.
Given a $k$-graph $H=(V, E)$ and a partition $\mathcal{P}=\{V_1,\ldots,\,V_r\}$ of $V$, the {\it index vector} $\mathbf{i}_\mathcal{P}(S)$ of a subset $S\subset V$ with respect to $\mathcal{P}$ is the vector whose $i$-th coordinate is the size of the intersection of $S$ and $V_i$. A vector $\mathbf{v}\in\mathbb{Z}^r$ is called an {\it $s$-vector} if all its coordinates are nonnegative and their sum equals to $s$.
Given a $k$-graph $F$ of order $t$ and $\mu>0$, a $t$-vector $\mathbf{v}$ is called a {\it $\mu$-robust $F$-vector} if there are at least $\mu n^t$ copies $F'$ of $F$ in $H$ satisfying $\mathbf{i}_\mathcal{P}(V(F'))$=$\mathbf{v}$. Let $I_{\mathcal{P},F}^{\mu}(H)$ be the set of all $\mu$-robust $F$-vectors and $L_{\mathcal{P},F}^{\mu}(H)$ be the lattice (i.e., the additive subgroup) generated by $I_{\mathcal{P},F}^{\mu}(H)$.
For $j\in[r]$, let $\mathbf{u}_j\in \mathbb{Z}^r$ be the $j$-th unit vector, namely, $\mathbf{u}_j$ has 1 on the $j$-th coordinate and 0 on other coordinates. A transferral is a vector of the form $\mathbf{u}_i-\mathbf{u}_j$ for some distinct $i, j\in[r]$.

The following lemma in \cite{lattice-based} states that if $L_{\mathcal{P},F}^{\mu}(H)$ contains all transferrals then $H$ is closed.

\begin{lem}[Lemma 3.9 in~\cite{lattice-based}]\label{lattice}
Let $i_0, k, r_0>0$ be integers and let $F$ be a $k$-graph on $t$ vertices. Given constants $\epsilon,\eta,\mu>0$, there exist $\eta'>0$ and an integer $i'_0\geq0$ such that the following holds for sufficiently large $n$: Let $H$ be a $k$-graph on $n$ vertices with a partition $\mathcal{P}=\{V_1,  \ldots, V_r\}$ such that $r\leq r_0$ and for each $j\in[r], |V_j|\geq \epsilon n$ and $V_j$ is $(F, i_0, \eta)$-closed in $H$. If $\mathbf{u}_j-\mathbf{u}_\ell\in L_{\mathcal{P},F}^{\mu}(H)$ for all $1\leq j<\ell\leq r$, then $H$ is $(F, i'_0,\eta')$-closed.
\end{lem}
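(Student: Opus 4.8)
The plan is to combine a transitivity principle for closeness with the lattice hypothesis so as to reduce everything to the behaviour inside the individual parts. First I would show that closeness is transitive with additive lengths: if $x,y$ are $(F,i_1,\eta_1)$-close and $y,z$ are $(F,i_2,\eta_2)$-close, then $x,z$ are $(F,i_1+i_2,\eta_3)$-close, where $\eta_3>0$ depends only on $\eta_1,\eta_2,t,i_1,i_2$. Indeed, if $S_1$ is an $(x,y)$-connector and $S_2$ a disjoint $(y,z)$-connector, both avoiding $\{x,y,z\}$, then $S_1\cup S_2\cup\{y\}$ has exactly $t(i_1+i_2)-1$ vertices, and from the identities $(S_1\cup S_2\cup\{y\})\cup\{x\}=(S_1\cup\{x\})\cup(S_2\cup\{y\})$ and $(S_1\cup S_2\cup\{y\})\cup\{z\}=(S_1\cup\{y\})\cup(S_2\cup\{z\})$ each of these sets splits into two pieces carrying $F$-factors, so $S_1\cup S_2\cup\{y\}$ is an $(x,z)$-connector; a routine count of the disjoint pairs $(S_1,S_2)$ gives at least $\eta_3 n^{t(i_1+i_2)-1}$ of them. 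Since each $V_j$ is $(F,i_0,\eta)$-closed by hypothesis, it is now enough to prove: for every pair of distinct $a,b\in[r]$ there are a vertex of $V_a$ and a vertex of $V_b$ that are $(F,i^*,\eta^*)$-close for some bounded $i^*$ and some $\eta^*>0$. Granting this, an arbitrary pair $v\in V_a$, $w\in V_b$ is handled by chaining inside $V_a$, then across to $V_b$, then inside $V_b$, and one takes $i_0'$ and $\eta'$ to be the worst parameters over the finitely many choices of $a,b$ (here $r\le r_0$ keeps the number of cases bounded).

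For the linking step I would use $\mathbf{u}_a-\mathbf{u}_b\in L_{\mathcal{P},F}^{\mu}(H)$. Writing this transferral as an integer combination of $\mu$-robust $F$-vectors and moving the negative terms to the other side produces an identity $\mathbf{u}_a+\mathbf{s}_1+\cdots+\mathbf{s}_N=\mathbf{u}_b+\mathbf{t}_1+\cdots+\mathbf{t}_N$ with all $\mathbf{s}_i,\mathbf{t}_j\in I_{\mathcal{P},F}^{\mu}(H)$; the two sides have the same length $N$ by comparing coordinate sums, and comparing the $a$-coordinates shows $\mathbf{t}_1+\cdots+\mathbf{t}_N$ has positive $a$-coordinate. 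Rearranging gives $\mathbf{t}_1+\cdots+\mathbf{t}_N-\mathbf{u}_a=\mathbf{s}_1+\cdots+\mathbf{s}_N-\mathbf{u}_b$; call this common $(tN-1)$-vector $\mathbf{w}$. I would then aim to build a set $S$ with $\mathbf{i}_{\mathcal{P}}(S)=\mathbf{w}$, disjoint from $\{x,y\}$, such that $S\cup\{x\}$ admits an $F$-factor whose copies have index vectors $\mathbf{t}_1,\dots,\mathbf{t}_N$ (consistent with $x\in V_a$) and $S\cup\{y\}$ admits an $F$-factor whose copies have index vectors $\mathbf{s}_1,\dots,\mathbf{s}_N$ (consistent with $y\in V_b$); such an $S$ is an $(x,y)$-connector of length $N$, and since each robust vector is realised by at least $\mu n^t$ copies of $F$, a greedy construction should deliver at least $\eta^* n^{tN-1}$ of them. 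The subtlety is that the two prescribed factorisations of $S$ are generally incompatible as partitions of $S$ into copies, so one cannot simply take $S$ to be a disjoint union of pre-selected copies; instead the conflicting copies must be rerouted through a bounded number of auxiliary copies, and it is here that the $(F,i_0,\eta)$-closedness of the parts $V_a$, $V_b$ (and of any intermediate part used along the way) is consumed, at the cost of enlarging $S$, and hence the connector length, by a bounded amount.

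The main obstacle is exactly this reassembly. A $\mu$-robust $F$-vector only certifies that many copies of $F$ with a prescribed index vector exist, whereas an $F$-factor of $S\cup\{x\}$ requires those copies to fit together as genuine vertex-disjoint subhypergraphs, so agreement of index vectors alone is insufficient; bridging this gap is where the hypothesis that every part is already $(F,i_0,\eta)$-closed genuinely does the work. Managing the bookkeeping of which bounded vertex exchanges are needed, and checking that at least $\eta' n^{\ell-1}$ connectors of the resulting length $\ell$ survive all the corrections, is the technical heart of the argument. (This statement is Lemma~3.9 of~\cite{lattice-based}, and the scheme above follows that reference together with the earlier treatment in~\cite{exact t_2 for K4}; in the present paper one therefore just cites it.)
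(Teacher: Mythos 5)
This lemma is not proved in the paper; it is quoted verbatim from Han, Zang and Zhao~\cite{lattice-based}, so your closing observation that one simply cites it is consistent with how the paper proceeds. Nevertheless, judged as a road map of the cited argument, the proposal has two problems.

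First, the transitivity statement as you phrase it is false. An $(x,z)$-connector of length $i_1+i_2$ is a set of $t(i_1+i_2)-1$ vertices, so $(i_1+i_2,\eta_3)$-closeness demands at least $\eta_3 n^{t(i_1+i_2)-1}$ such sets. Your construction fixes the single vertex $y$ and takes $S_1\cup S_2\cup\{y\}$ with $|S_1|=ti_1-1$ and $|S_2|=ti_2-1$; the number of ordered pairs $(S_1,S_2)$ is at most $n^{t(i_1+i_2)-2}$, so even before discounting coincidences you are one power of $n$ short. The transitivity principle used in this literature requires that the set of intermediate vertices $y$ that are simultaneously $(i_1,\eta_1)$-close to $x$ and $(i_2,\eta_2)$-close to $z$ have size $\Omega(n)$; the missing factor is then recovered by summing over $y$. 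This propagates to your reduction: chaining $v\to x_a\to x_b\to w$ through two \emph{fixed} intermediates $x_a,x_b$ loses two powers of $n$. Either $x_a$ and $x_b$ must be allowed to range over linear-sized sets of candidates, or the linking step must be carried out directly for arbitrary $x\in V_a$, $y\in V_b$, avoiding the chain entirely.

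Second, as you yourself acknowledge, the construction of the connector $S$ is left as a black box. The plan is to produce one set $S$ with $\mathbf{i}_{\mathcal{P}}(S)=\mathbf{w}$ carrying two $F$-factors with prescribed index vectors $(\mathbf t_j)$ and $(\mathbf s_j)$; you correctly note that $\mu$-robustness alone does not yield such an $S$ and that the within-part closedness must be used to ``reroute'' the conflict, but none of that rerouting is specified: how the corrections are kept vertex-disjoint, why the resulting length $i_0'$ depends only on $i_0$, $t$, $r_0$ and the fixed lattice coefficients, and why a positive density $\eta' n^{t i_0'-1}$ of connectors survives the corrections. This is precisely the substance of the lemma, and deferring it means the proposal does not amount to a proof. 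Since the present paper only imports the statement, neither issue is a flaw in the paper, but both are genuine gaps in the blind argument.
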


The following lemma helps us to count the number of copies of $K^3(m)$.
\begin{lem}[Corollary 2 in~\cite{Erdos-Simonovits-83} ]\label{supersaturation}
Let $F$ be a $k$-partite $k$-graph of order $t$. For every $\epsilon>0$, there exists a constant $\mu>0$ and an integer $n_0$ such that every $k$-graph $H$ of order $n\geq n_0$ with $e(H)>\epsilon n^k$ contains at least $\mu n^t$ copies of $F$.
\end{lem}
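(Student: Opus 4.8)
The plan is to run the classical supersaturation argument of Erd\H{o}s and Simonovits; its only non-routine ingredient is a Tur\'an-type bound, and this is the sole place where $k$-partiteness of $F$ enters. Let $t_1,\dots,t_k$ be the part sizes of $F$ and set $s:=\max_i t_i$, so that $F$ is a subgraph of the complete $k$-partite $k$-graph $K^k(s,\dots,s)$; consequently every $F$-free $k$-graph is $K^k(s,\dots,s)$-free, and by Erd\H{o}s's theorem on complete multipartite hypergraphs (a nested convexity/counting argument extending the K\H{o}v\'ari-S\'os-Tur\'an bound) we obtain $\mathrm{ex}(n,F)\le\mathrm{ex}(n,K^k(s,\dots,s))=O(n^{k-s^{1-k}})=o(n^k)$. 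Hence for the given $\epsilon>0$ there is an integer $N=N(F,\epsilon)$ so that every $k$-graph on $N$ vertices with more than $(\epsilon/2)N^k$ edges contains a copy of $F$; enlarging $N$ if necessary we may also assume $(N)_k:=N(N-1)\cdots(N-k+1)>N^k/2$.

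Let $H$ be a $k$-graph on $n$ vertices with $e(H)>\epsilon n^k$, and take $n$ large compared with $N$ and $t=|V(F)|$. Counting each edge of $H$ by the $N$-element subsets of $V(H)$ containing it gives $\sum_{S\in\binom{V(H)}{N}}e(H[S])=e(H)\binom{n-k}{N-k}$, so the average of $e(H[S])$ over all $N$-subsets $S$ equals $e(H)\cdot\frac{(N)_k}{(n)_k}>\epsilon\,(N)_k>(\epsilon/2)N^k$, where we used $e(H)>\epsilon n^k\ge\epsilon\,(n)_k$ and $n^k\ge(n)_k$. Since $e(H[S])\le\binom Nk$ for every $S$, a standard averaging estimate produces a constant $\rho=\rho(F,\epsilon)>0$ such that at least $\rho\binom nN$ of the $N$-subsets $S$ satisfy $e(H[S])>(\epsilon/2)N^k$; by the choice of $N$ each such $H[S]$ contains a copy of $F$.

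It remains to double count copies of $F$ against $N$-subsets. A copy of $F$ in $H$ occupies a fixed set of $t=|V(F)|$ vertices, hence is contained in exactly $\binom{n-t}{N-t}$ of the $N$-subsets; therefore the number of $N$-subsets that contain a copy of $F$ is at most $\binom{n-t}{N-t}$ times the number of copies of $F$ in $H$. Combining this with the previous paragraph, $H$ contains at least $\rho\binom nN/\binom{n-t}{N-t}=\rho\,(n)_t/(N)_t$ copies of $F$, which is at least $\mu n^t$ for a suitable constant $\mu=\mu(F,\epsilon)>0$ once $n$ is large; choosing $n_0$ large enough to justify all the inequalities used completes the argument.

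The only genuine obstacle is the input bound $\mathrm{ex}(n,F)=o(n^k)$: this is precisely where $k$-partiteness is needed, since a non-$k$-partite $F$ is already avoided by a balanced complete $k$-partite $k$-graph, which has $\Omega(n^k)$ edges and hence makes no such bound possible; once this bound is in hand, the averaging over $N$-subsets and the final double count are the textbook supersaturation machinery, going through exactly as in the graph case.
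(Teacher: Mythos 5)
The paper gives no proof of this lemma --- it is quoted directly as Corollary~2 of Erd\H{o}s and Simonovits~\cite{Erdos-Simonovits-83} --- so there is no internal argument to compare yours against. Your argument is exactly the classical supersaturation proof behind that citation: a degenerate Tur\'an bound $\mathrm{ex}(n,F)=o(n^k)$ for $k$-partite $F$ via Erd\H{o}s's bound for $K^k(s,\dots,s)$, averaging of $e(H[S])$ over $N$-element vertex subsets to find a positive proportion of subsets already forcing a copy of $F$, and a final double count dividing by $\binom{n-t}{N-t}$; this is correct. The one step worth tightening is the averaging: having shown the average of $e(H[S])$ exceeds $\epsilon(N)_k$ while the threshold that forces $F$ is $(\epsilon/2)N^k$, you should make explicit that the positive gap $\epsilon\bigl((N)_k-N^k/2\bigr)$ (coming from your choice of $N$) together with the trivial bound $e(H[S])\le\binom{N}{k}$ gives a concrete $\rho=\bigl(\epsilon(N)_k-(\epsilon/2)N^k\bigr)/\bigl(\binom{N}{k}-(\epsilon/2)N^k\bigr)>0$, noting $\binom{N}{k}>(\epsilon/2)N^k$ since one may assume $\epsilon<1/k!$ (otherwise no $H$ satisfies the hypothesis).
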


 We also need the following lemma from \cite{nonextremal}.

\begin{lem}[Lemma 3.3 in \cite{nonextremal}]\label{nonextremal}
Let $0<1/n\ll\gamma< 1/100$. Suppose that $H$ is a 3-graph of order $n$ with $\delta_2(H)\geq(1/2-\gamma)n$. Let $X,~Y$ be any bipartition of $V(H)$ with $|X|, |Y|\geq n/5$. If $H$ is not $3\gamma$-extremal, then $H$ contains  at least $\gamma^2n^3$ $XXY$-edges and at least $\gamma^2n^3$ $XYY$-edges.
\end{lem}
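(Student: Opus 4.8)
The plan is to prove the contrapositive: if $e(XXY)<\gamma^2n^3$ or $e(XYY)<\gamma^2n^3$, then $H$ is $3\gamma$-extremal. Since being $3\gamma$-extremal is a property of $H$ alone — the definition quantifies over all balanced partitions of $V(H)$ and makes no reference to $X$ or $Y$ — we may relabel the two classes and assume throughout that $e(XXY)<\gamma^2n^3$. I would then show that the partition $V(H)=X\cup Y$ is itself nearly balanced and that, after moving only $O(\gamma n)$ vertices between the classes to make it exactly balanced with the smaller class first, the resulting copy of $\mathcal B[A,B]$ is $3\gamma$-contained in $H$.

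\emph{Step 1 (the classes are nearly balanced).} Double counting $XXY$-edges through their pairs gives $e(XXY)=\sum_{\{u,v\}\in\binom X2}d_H(uv;Y)$ and $2e(XXY)=\sum_{x\in X,\,y\in Y}d_H(xy;X)$. Using $\delta_2(H)\ge(1/2-\gamma)n$ and $|X|,|Y|\ge n/5$, a pair inside $X$ is forced to have at least $(1/2-\gamma)n-(|X|-2)$ neighbours in $Y$, so if $|X|\le(1/2-\gamma)n-s$ then $e(XXY)\ge\binom{|X|}2 s=\Omega(n^2 s)$, which contradicts $e(XXY)<\gamma^2n^3$ unless $s=O(\gamma^2 n)$; symmetrically, a cross pair is forced to have at least $(1/2-\gamma)n-(|Y|-1)$ neighbours in $X$, so $|Y|$ cannot be much below $n/2$ either. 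Hence $|X|,|Y|=(1/2\pm 2\gamma)n$ and in particular $\bigl||X|-|Y|\bigr|<4\gamma n$.

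\emph{Step 2 (closeness to $\mathcal B[X,Y]$, and rebalancing).} Call a pair $\{u,v\}\subseteq X$ bad if $d_H(uv;Y)>\gamma n$; the first identity in Step~1 shows there are fewer than $\gamma n^2$ bad pairs in $X$, while every good pair has $d_H(uv;X)\ge(1/2-2\gamma)n\ge|X|-4\gamma n$, i.e. at most $4\gamma n$ non-neighbours inside $X$. Likewise at most $2\gamma n^2$ cross pairs $\{x,y\}$ have $d_H(xy;X)>\gamma n$, and a good cross pair has at most $4\gamma n$ non-neighbours inside $Y$. Since $E(\mathcal B[X,Y])$ consists exactly of the triples of type $XXX$ together with those of type $XYY$, every such triple missing from $H$ either contains a bad pair (of the relevant kind) or is a non-edge all of whose relevant pairs are good; the number of triples of the first sort is at most $O(\gamma n^2)\cdot n$, and of the second sort at most (number of relevant good pairs)$\cdot 4\gamma n=O(\gamma n^3)$. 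Tracking constants via $|X|,|Y|\le(1/2+2\gamma)n$ and $|X||Y|\le n^2/4$ pins this down below $3\gamma n^3$, with slack because $e(XXY)<\gamma^2n^3\ll\gamma n^3$. Finally, if $\{|X|,|Y|\}\ne\{\lfloor n/2\rfloor,\lceil n/2\rceil\}$, move the excess of $\le 4\gamma n$ vertices to balance the classes; this alters the count of missing $\mathcal B$-edges by only $O(\gamma n^3)$, and taking $A,B$ to be the smaller and larger class shows $H$ is $3\gamma$-extremal.

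\emph{Where the difficulty lies.} The only delicate point is the constant bookkeeping in Step~2: because the codegree bound is $(1/2-\gamma)n$ rather than $n/2$, each good pair may already have $\Theta(\gamma n)$ forbidden extensions, so the number of $\mathcal B[X,Y]$-edges absent from $H$ is intrinsically of order $\gamma n^3$ — comfortably below, but of the same order as, the target $3\gamma n^3$. One must therefore choose the "bad pair" threshold (here $\gamma n$, balancing the two error sources) carefully and exploit that the true hypothesis $e(XXY)<\gamma^2 n^3$ is far stronger than what the size estimates in Step~1 consume; with the thresholds chosen well the constants close, and the double counting, the Markov-type averaging, and the rebalancing are all routine.
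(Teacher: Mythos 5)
This lemma is not proved in the paper: it is quoted verbatim (as ``Lemma 3.3'') from Han--Lo--Treglown--Zhao \cite{nonextremal}, so there is no in-paper argument to compare against. Judged on its own, your contrapositive plan --- assume WLOG $e(XXY)<\gamma^2 n^3$, deduce $|X|,|Y|=(1/2\pm O(\gamma))n$, deduce $H$ is close to $\mathcal{B}[X,Y]$, then rebalance --- is the natural one and it does go through.

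Two remarks. First, the constant bookkeeping you flag is genuinely tight with your threshold $\gamma n$: the three contributions (triples through a bad pair, non-edges through good pairs, and the edges created by rebalancing) are each of order $\gamma n^3$, and with $|X|,|Y|\le(1/2+2\gamma)n$ they total roughly $2.8\gamma n^3$, which does squeeze under $3\gamma n^3$ for $\gamma<1/100$ but with little slack. A cleaner route that avoids the dichotomy is to double-count directly: from $d_H(uv;X)\ge(1/2-\gamma)n-d_H(uv;Y)$ one gets
\[
\#\{\text{missing }XXX\}\le\tfrac13\Bigl[\tbinom{|X|}{2}\bigl(|X|-2-(1/2-\gamma)n\bigr)+e(XXY)\Bigr],
\]
and similarly $\#\{\text{missing }XYY\}\le\tfrac12\bigl[|X||Y|\bigl(|Y|-1-(1/2-\gamma)n\bigr)+2e(XXY)\bigr]$; with $|X|,|Y|\le(1/2+2\gamma)n$ this gives about $0.5\gamma n^3$ before rebalancing, leaving ample room. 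Second, a small point of precision: your phrase ``taking $A,B$ to be the smaller and larger class'' could be read as allowing the rebalanced copy of $X$ to end up as $B$; that would be wrong, since $\mathcal{B}[A,B]$ is not symmetric in $A,B$ (its edges have $|e\cap A|\in\{1,3\}$, so $A$ must inherit the role of $X$). You should instead move vertices so that the part descending from $X$ has size exactly $\lfloor n/2\rfloor$ and label it $A$; since $|X|=(1/2\pm 2\gamma)n$ this still only moves $O(\gamma n)$ vertices.
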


Now it is ready to give the proof of our absorption lemma, we restate it here.

\begin{lem}\label{absorption-c}
Let $0<\epsilon_2\ll\epsilon_1\ll\gamma\ll1$ and $m$ be an positive integer. Suppose that $H$ is a 3-graph of order $n$ with $\delta_2(H)\geq(1/2-\gamma)n$. If $H$ is not $3\gamma$-extremal, then there exists a set $W\subset V(H)$ with $|W|\le\epsilon_1 n$ and $|W|\in 3m\mathbb{N}$  so that for any $U\subset V(H)\setminus W$ with $|U|\leq\epsilon_2n$ and $|U|\in 3m\mathbb{N}$, both $H[W]$ and $H[U\cup W]$ contain $K^3(m)$-factors.
\end{lem}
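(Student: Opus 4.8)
The plan is to apply the lattice-based absorption machinery (Lemmas~\ref{abp}, \ref{closed pair}, \ref{partition}, \ref{lattice}) with $F = K^3(m)$, so that $t = 3m$. The only thing that needs genuine work is to verify the hypotheses of these black boxes under the degree condition $\delta_2(H) \ge (1/2 - \gamma)n$ together with the non-extremality assumption; once $H$ is shown to be $(K^3(m), i_0', \eta')$-closed, Lemma~\ref{abp} immediately produces the absorbing set $W$ of size at most $\eta_1 n \le \epsilon_1 n$ with $|W| \in 3m\mathbb{Z}$, and nonnegativity of $|W|$ is automatic, so $|W| \in 3m\mathbb{N}$. I would begin by establishing a \emph{local closeness} statement: using $\delta_2(H) \ge (1/2 - \gamma)n$, any two vertices $x, y$ with $|N(x) \cap N(y)|$ large satisfy the hypothesis of Lemma~\ref{closed pair} (take a constant fraction $\alpha$ of pairs $S$ in the common link, each with $|N(S)| \ge \delta_1(H) - \text{(small)} \ge \alpha n$), hence $x$ and $y$ are $(K^3(m), 1, \eta)$-close for a suitable $\eta$. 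Then I would argue that every vertex $v$ has $|\tilde N_{F,1,\eta}(v)| \ge \delta n$ for some constant $\delta$ (say $\delta$ slightly below $1/2$), because $v$'s link graph has at least $(1/2-\gamma)n$ edges through each vertex, so at least $\delta n$ vertices $y$ share a large common neighbourhood with $v$; and similarly that among any $c+1$ vertices (for a suitable absolute constant $c$, here $c = 2$ will do) two are close, since three pairwise ``far'' vertices would force disjoint-looking link structure contradicting the codegree bound.

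With these two inputs, Lemma~\ref{partition} yields a partition $\mathcal{P} = \{V_1, \ldots, V_r\}$ with $r \le c = 2$ (so $r \in \{1,2\}$), each $|V_i| \ge (\delta - \eta)n \ge n/5$, and each $V_i$ being $(K^3(m), 2^{c-1}, \eta')$-closed. If $r = 1$ we are already done: $H$ is $(K^3(m), i_0, \eta')$-closed and Lemma~\ref{abp} applies. The crux is the case $r = 2$, where I must show the transferral $\mathbf{u}_1 - \mathbf{u}_2 \in L_{\mathcal{P}, F}^{\mu}(H)$ so that Lemma~\ref{lattice} upgrades closeness to all of $V(H)$. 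Here is where non-extremality enters through Lemma~\ref{nonextremal}: with $X = V_1$, $Y = V_2$ (both of size $\ge n/5$), since $H$ is not $3\gamma$-extremal, $H$ contains at least $\gamma^2 n^3$ edges of type $XXY$ and at least $\gamma^2 n^3$ of type $XYY$. Each such edge sits in $\Theta(n^{3m-3})$ copies of $K^3(m)$ by supersaturation-type counting (or more carefully, by Lemma~\ref{supersaturation} applied inside an appropriate dense induced subgraph, after fixing the three vertices of the edge into three distinct colour-classes of the copy), and a constant fraction of these copies can be taken to have index vector differing only in how the remaining $3m-3$ vertices are distributed between $V_1$ and $V_2$; by a pigeonhole/averaging argument over the $O(1)$ possible index vectors, we obtain two $\mu$-robust $F$-vectors $\mathbf{v}, \mathbf{v}'$ with $\mathbf{v} - \mathbf{v}' = \mathbf{u}_1 - \mathbf{u}_2$ (the $XXY$-edge family gives robust vectors with one more coordinate-1 entry than the $XYY$-edge family, and matching up the bulk distributions gives exactly a single-unit shift). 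Hence $\mathbf{u}_1 - \mathbf{u}_2 \in L_{\mathcal{P}, F}^{\mu}(H)$.

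The main obstacle, and the step I expect to require the most care, is precisely this transferral argument in the $r=2$ case: one must extend a single $XXY$- or $XYY$-edge to $\Omega(n^{3m})$ copies of $K^3(m)$ with \emph{controlled} index vectors, which needs a robust counting argument ensuring the remaining $3m - 3$ vertices can be chosen with a prescribed (or at least averaged-over) split between $V_1$ and $V_2$ while still completing a copy of $K^3(m)$ — this is where the codegree condition $\delta_2(H) \ge (1/2-\gamma)n$ must be used again to guarantee enough common neighbours across the two parts. Once the lattice $L_{\mathcal{P},F}^{\mu}(H)$ is shown to contain the transferral, Lemma~\ref{lattice} gives that $H$ is $(K^3(m), i_0', \eta')$-closed, and then Lemma~\ref{abp} with $t = 3m$ finishes the proof, the sizes matching since $\eta_1 \ll \epsilon_1$ and $\eta_2 \gg \epsilon_2$ can be arranged by the hierarchy $\epsilon_2 \ll \epsilon_1 \ll \gamma \ll 1$. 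Finally, one checks $H[W]$ itself has a $K^3(m)$-factor (taking $U = \emptyset$ in Lemma~\ref{abp}), which is part of that lemma's conclusion.
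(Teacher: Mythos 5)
Your high-level strategy matches the paper's exactly: establish $(K^3(m),1,\eta)$-closeness of most pairs via Lemma~\ref{closed pair}, invoke Lemma~\ref{partition} with $c=2$, reduce to a two-part partition, realize the transferral $\mathbf{u}_1-\mathbf{u}_2$ in the robust lattice so that Lemma~\ref{lattice} closes $H$, and finally apply Lemma~\ref{abp}. The preliminary parts (everybody has a large close-neighbourhood, any three vertices contain a close pair) are sketched correctly and coincide with the paper's Claim~3.1 and the inclusion-exclusion argument following it.

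The genuine gap is in the transferral step, and you already flagged it as the step needing the most care — but the mechanism you propose does not actually produce the transferral. Pigeonhole applied to the $\Omega(n^{3m})$ copies of $K^3(m)$ extending $XXY$-edges gives you \emph{some} robust vector $\mathbf{v}$, and pigeonhole applied to the $XYY$-extensions gives you \emph{some} robust vector $\mathbf{v}'$, but there is no reason at all that $\mathbf{v}-\mathbf{v}' = \mathbf{u}_1-\mathbf{u}_2$. For example, every $XXY$-edge might overwhelmingly extend to copies of type $(2m,m)$ while every $XYY$-edge extends to copies of type $(m,2m)$; the resulting lattice contains $(m,-m)$ but not $(1,-1)$. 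The claim that ``matching up the bulk distributions gives exactly a single-unit shift'' assumes precisely what needs to be proved: that the remaining $3m-3$ vertices can be placed with a \emph{controlled, consecutive} split between $V_1$ and $V_2$. The codegree condition alone does not force this, because near the extremal configuration a pair's link is heavily concentrated on one side. The paper gets around this by decomposing the cross-pairs into $E_0$ (pairs with large degree to \emph{both} $X$ and $Y$), $E_1$, $E_2$, and running two separate counting arguments: when $|E_0|$ is large (Claim~3.2), the flexibility of $E_0$-pairs yields $\Omega(n^{3m})$ copies of type $(m+s,m+t)$ for \emph{every} $0\le s\le m$, so consecutive $s$ give the transferral; when $|E_0|$ is small (Claim~3.3), one works inside $\binom{X}{2}$, shows there are $\Omega(n^2)$ pairs with $d_X\ge\gamma n$ and $d_Y\ge\gamma^2 n/10$, and again obtains robust vectors $(2m+s,t)$ (or symmetrically $(t,2m+s)$) for all $s$. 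The ``for all $s$'' quantifier is what makes the transferral certain, and it is exactly the content your proposal is missing. Without some version of this case analysis, the proof does not close.
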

\begin{proof}
Assume $\gamma$ is sufficiently small and let $\alpha=\gamma/3$. Let $F=K^3(m)$. If  we prove that $H$ is $(F, i, \eta)$-closed for some parameters  $i>0$ and $0<\eta\ll\gamma$, then by Lemma \ref{abp} with $t=3m$ we obtain the desired absorbing set. So in the following it is sufficient to show that $H$ is $(F, i, \eta)$-closed for some parameters  $i>0$ and $0<\eta\ll\gamma$.

\begin{claim}\label{CLAIM: c1}
For each $v\in V(H)$ and some $0<\eta\ll\gamma$, $\tilde{N}_{F,1,\eta}(v)\geq (1/2-2\gamma)n$.
\end{claim}

\noindent{\it Proof of Claim~\ref{CLAIM: c1}:} Fix $v\in V(H)$, we have
\begin{equation}\label{e2}
|N(v)|\geq\frac{(1/2-\gamma)n(n-1)}2=(1/2-\gamma)\binom{n}{2}.
\end{equation}
Note that $|N(S)|\geq(1/2-\gamma)n\geq\alpha n$ for any 2-elements set $S\subseteq V(H)$. By Lemma~\ref{closed pair},  we have $u\in\tilde{N}_{F,1,\eta}(v)$ if $|N(v)\cap N(u)|\geq \alpha\binom{n}{2}$ for any $0<\eta\le \eta_0=\eta_0(k,F,\alpha)$. Let $G$ be a bipartite graph with partition classes $N(v)$ and $V(H)\setminus\{v\}$, and a 2-elements set $S\in N(v)$ and a vertex $w\in V(H)\setminus\{v\}$ are adjacent in $G$ if and only if $S\cup\{w\}\in E(H)$. Then we have
\begin{equation*}
e(G)=\sum_{S\in N(v)}d_G(S)=\sum_{S\in N(v)}(|N(S)|-1)<|\tilde{N}_{F,1,\eta}(v)|\cdot|N(v)|+n\cdot\alpha\binom{n}{2}.
\end{equation*}
Together with $|N(S)|\geq(1/2-\gamma)n$, we have
\begin{equation*}
|\tilde {N}_{F,1,\eta}(v)|\geq (1/2-\gamma)n-1-\frac{n\cdot\alpha\binom{n}{2}}{ (1/2-\gamma)\binom{n}{2}}{\geq (1/2-2\gamma)n}. \quad\quad \rule{1mm}{2mm}
\end{equation*}

Given any three vertices $x_1,x_2, x_3\in V(H)$, by~(\ref{e2}) and the inclusion-exclusion principle, we have
\begin{eqnarray*}
\sum_{1\le i<j\le 3}|N(x_i)\cap N(x_j)|&=& \sum_{i=1}^3|N(x_i)|-|\cup_{i=1}^3N(x_i)|+|\cap_{i=1}^3N(x_i)|\\
                                       &\ge& 3(1/2-\gamma){n\choose 2}-|\cup_{i=1}^3N(x_i)|+|\cap_{i=1}^3N(x_i)|\\
                                       &\geq& 3\alpha\binom{n}{2}+{n\choose 2}-|\cup_{i=1}^3N(x_i)|+|\cap_{i=1}^3N(x_i)|\\
                                       &\ge & 3\alpha\binom{n}{2}.
\end{eqnarray*}
By the pigeonhole principle, there exists at least one pair $x_i, x_j$ such that $|N(x_i)\cap N(x_j)|\geq \alpha\binom{n}{2}$, by Lemma~\ref{closed pair},  such a pair $x_i, x_j$  is $(F,1,\eta)$-close.

Now apply Lemma \ref{partition} to $F$ and $H$ with $\delta=(1/2-2\gamma)$, $c=2$ and $\eta\ll\gamma$, we have that there exist a constant $\eta'>0$ and  a partition $\mathcal{P}$ of $V$ with at most 2 parts such that each part has size at least $(1/2-3\gamma)n$ and is $(F,2,\eta')$-closed in $H$. If $|\mathcal{P}|=1$, then $H$ is  $(F,2,\eta')$-closed, as desired. So, we assume $|\mathcal{P}|=2$ and $\mathcal{P}=\{X, Y\}$.
Since $H$ is not $3\gamma$-extremal, by Lemma \ref{nonextremal}, we have both $e(XXY)$ and $e(XYY)$ are at least $\gamma^2n^3$.

Define $$E_0£º=\{xy:x\in X,y\in Y,\, d_X(xy)\geq\gamma^2 n, d_Y(xy)\geq\gamma^2 n\},$$
 $$E_1£º=\{xy:x\in X,y\in Y,\, d_X(xy)\geq\gamma^2 n, d_Y(xy)<\gamma^2 n\},$$
  and $$E_2£º=\{xy:x\in X,y\in Y,\, d_X(xy)<\gamma^2 n, d_Y(xy)\geq\gamma^2 n \}.$$
Then $E(K(X,Y))=E_0\cup E_1\cup E_2$. So $|E_i|\le e(K(X,Y))\le \frac {n^2}4$ for any $i\in \{0, 1,2\}$. By Lemma \ref{lattice}, to show that $H$ is closed it suffices to show $\mathbf{u}_1-\mathbf{u}_2\in L_{\mathcal{P},F}^{\mu}(H)$ for some $\mu$, or equivalently, we need to show that $H$ contains at least $\mu n^{3m}$ copies of $K^3(m)$ of types $(i,3m-i)$ and $(i+1,3m-i-1)$ for some $i$, respectively. We split the following proof into two cases depending on the size of $E_0$. The next claim deals with the case when $|E_0|$ is sufficiently large.

\begin{claim}\label{large E0}
There exists $\mu_1>0$ for any given integers $0\leq s,t\leq m$ with $s+t=m$ such that the following holds: If $|E_0|\geq\gamma^4 n^2$, then $H$ contains at least $\mu_1 n^{3m}$ copies of $K^3(m)$ of type $(m+s,m+t)$.
\end{claim}

\noindent{\it Proof of Claim \ref{large E0}:} Choose $0<\gamma_1\ll\gamma$.
Given $x\in X, y\in Y$, let $k_{s,t}(xy)$ be the number of copies of $K^3({\{x\},\{y\}, M})$ where $|M\cap X|=s$ and $|M\cap Y|=t$. Given $S\subset X$ and  $T\subset Y$ with $|S|=s$ and $|T|=t$, let $G(S,T)$ be the bipartite graph with vertex set $X\cup Y$ and edge set consisting of all pairs $xy$ such that $x\in X, y\in Y$ and $xyz\in E(H)$ for all $z\in S\cup T$. Then by double counting we have
\begin{equation*}
\sum_{x\in X, y\in Y} k_{s,t}(xy)=\sum_{S\in\binom{X}{s},T\in\binom{Y}{t}} |G(S,T)|.
\end{equation*}
In the following,  we write LHS (resp. RHS) for left-hand side (resp. right-hand side) for short. Note that $|E_0|\geq\gamma^4 n^2$, we have
\begin{equation*}
LHS=\sum_{x\in X, y\in Y}\binom{d_X(xy)}{s}\binom{d_Y(xy)}{t}\geq |E_0|\binom{\gamma^2 n}{s}\binom{\gamma^2 n}{t}\geq \gamma_1 n^{m+2}.
\end{equation*}
On the other hand,
\begin{eqnarray*}
RHS&=&\left(\sum_{|G(S,T)|<\gamma_1 n^2}+\sum_{|G(S,T)|\geq\gamma_1 n^2}\right)|G(S,T)|\\
&\leq&\sum_{|G(S,T)|<\gamma_1 n^2}\gamma_1 n^2+\sum_{|G(S,T)|\geq\gamma_1 n^2}|X||Y|.
\end{eqnarray*}
So the number of pairs $(S,T)$ such that $|G(S,T)|\geq \gamma_1 n^2$ is at least
\begin{equation*}
\left(\gamma_1 n^{m+2}-\gamma_1 n^2\binom{|X|}{s}\binom{|Y|}{t}\right)/(|X||Y|)\geq \frac{1}{2}\gamma_1 n^{m+2}/(n^2/4)=2\gamma_1 n^m.
\end{equation*}
Fix such a pair $(S,T)$, by Lemma~\ref{supersaturation}, there is a positive constant $\mu_0$ such that $G(S,T)$  contains at least $\mu_0 n^{2m}$ copies of $K^{2}(m)$, which gives us at least $\mu_0 n^{2m}$ copies of $K^3(m)$ of type $(m+s,m+t)$. Choose $0<\mu_1\ll\mu_0$. Summing up all of such pairs $(S, T)$, we get at least $2\gamma_1 n^m\mu_0 n^{2m}\geq\mu_1 n^{3m}$ copies of $K^3(m)$ of type $(m+s,m+t)$ in $H$. \quad \rule{1mm}{2mm}

\begin{claim}\label{small E0}
Given integers $0\leq s,t\leq m$ with $s+t=m$, there exists $\mu_1'>0$ such that the following holds: If $|E_0|<\gamma^4 n^2$, then $H$ contains at least $\mu_1' n^{3m}$ copies of $K^3(m)$ of the same type either $(2m+s,t)$ or $(t,2m+s)$.
\end{claim}

\noindent{\it Proof of Claim~\ref{small E0}:}
Without loss of generality, assume that $|E_1|\leq |E_2|$.
First, we show $3\gamma^2 n^2\leq|E_1|\leq \frac 18n^2$. The upper bound is trivial by the assumption that $|E_1|\le |E_2|$. Now suppose that  $|E_1|<3\gamma^2n^2$. Then,  we have
\begin{eqnarray*}
e(XXY)&&=\frac 12\sum_{x\in X,y\in Y} d_X(xy)\\
&&<\frac 12\left(|E_0|\cdot|X|+|E_1|\cdot|X|+|E_2|\cdot\gamma^2n\right)\\
&&<\frac 12\left(\left(\gamma^4+3\gamma^2\right)n^2\cdot\left(\frac 12+3\gamma\right)n+\frac {n^2}4\cdot\gamma^2n\right)\\
&&<\gamma^2n^3,
\end{eqnarray*}
a contradiction to  $e(XXY)\ge \gamma^2 n^3$. Thus, we have $|E_1|\geq3\gamma^2n^2$. Note that for $xy\in E_1$, we have $d_X(xy)\geq(1/2-\gamma-\gamma^2)n$ and hence $(1/2-\gamma-\gamma^2)n\leq|X|,|Y|\leq(1/2+\gamma+\gamma^2)n.$

Let $Y'=\{y\in Y: d_{E_0}(y)\leq\gamma^2n\}.$
Since $|E_0|\leq\gamma^4 n^2$, there are at most $\gamma^2n$ vertices $y$ in $Y$ such that $d_{E_0}(y)>\gamma^2n$. Thus we have $|Y'|\geq|Y|-\gamma^2 n$.

We claim that either {$d_{E_1}(y)\leq 3\gamma^2n$ or $d_{E_1}(y)\geq |X|-3\gamma n$,} for all $y\in Y'$.
Fix $y\in Y'$. Let $e_y$ be the number of edges $x_1x_2y$ of the form $XXY$ such that exactly one of $\{x_1y, x_2y\}$ belongs to $E_1$.
On one hand, we have
$$e_y\ge((1/2-\gamma-\gamma^2)n-d_{E_1}(y))\cdot d_{E_1}(y)\ge (|X|-2\gamma n-2\gamma^2 n-d_{E_1}(y))\cdot d_{E_1}(y),$$
since for each $x\in N_{E_1}(y)$, there are at least $(1/2-\gamma-\gamma^2)n-d_{E_1}(y)$ edges $xx'y$ of the form $XXY$ with $x'\in N_{E_0}(y)\cup N_{E_2}(y)$ and $|X|\leq(1/2+\gamma+\gamma^2)n$. On the other hand, we have
$$e_y\le |X|\cdot d_{E_0}(y)+\gamma^2n\cdot d_{E_2}(y)\le 2\gamma^2n|X|,$$
since $d_X(x'y)<\gamma n^2$ for $x'y\in E_2$, and the last inequality holds since $d_{E_0}(y)\leq\gamma^2n$ {and $d_{E_2}(y)\le |X|$.}
Therefore, we have
$$(|X|-2\gamma n-2\gamma^2 n-d_{E_1}(y))\cdot d_{E_1}(y)\le 2\gamma^2n|X|,$$
And hence we have either $d_{E_1}(y)\leq 3\gamma^2n$ or $d_{E_1}(y)\geq |X|-3\gamma n$, for all $y\in Y'$.


Let $Y_0=\{ y : d_{E_1}(y)\geq |X|-3\gamma n, y\in Y'\}$. Clearly, $$|Y_0|\geq \frac{|E_1|-(|Y|-|Y'|)|X|-|Y|\cdot3\gamma^2n}{|X|}\geq\frac{3\gamma^2n^2-\gamma^2n|X|-|Y|\cdot3\gamma^2n}{|X|}\geq\gamma^2 n.$$

Now we claim that there are at least {$(1-14\gamma)\binom{|X|}{2}$} pairs $x_1x_2\in \binom{X}{2}$  such that $d_Y(x_1x_2)\geq\frac 1{10}\gamma^2 n$.
Clearly, {
\begin{eqnarray*}
e(XXY_0)&&=\frac 12\sum_{x\in X,y\in Y_0} d_X(xy)\\
&&\geq \frac {|Y_0|(1/2-\gamma-\gamma^2)n(|X|-3\gamma n)}2\\
&&\geq \frac {|Y_0||X|(1-5\gamma)|X|(1-7\gamma)}2\\
&&\geq (1-12\gamma)\binom{|X|}{2}|Y_0|.
\end{eqnarray*}}
{ On the other hand, if the number of pairs $x_1x_2\in {\binom{X}2}$ with $d_{Y_0}(x_1x_2)\ge \frac 1{10}\gamma^2n$ is less than $(1-14\gamma)\binom{|X|}2$, we have
\begin{eqnarray*}
e(XXY_0)&=&\sum_{x_1x_2\in\binom{X}{2}}d_{Y_0}(x_1x_2)\\
        &<& (1-14\gamma)\binom{|X|}2|Y_0|+14\gamma\binom{|X|}2\frac 1{10}\gamma^2n\\
       &\le& (1-12\gamma)\binom{|X|}2|Y_0|-2\gamma\binom{|X|}2|Y_0|+14\gamma\binom{|X|}2\frac 1{10}|Y_0|\\
       &=&(1-12\gamma)\binom{|X|}2|Y_0|-\frac35\gamma\binom{|X|}2|Y_0|\\
       &<& (1-12\gamma)\binom{|X|}2|Y_0|,
\end{eqnarray*}
a contradiction.}

Next, we claim that there are at least $(\frac12-11\gamma)\binom{|X|}{2}$ pairs $x_1x_2\in \binom{X}2$  such that $d_X(x_1x_2)\geq \gamma n$.
In fact,
\begin{eqnarray*}
\sum_{x_1x_2\in\binom{X}{2}}d_X(x_1x_2)&=&\sum_{x_1x_2\in\binom{X}{2}}d_H(x_1x_2)-\sum_{x_1x_2\in\binom{X}{2}}d_Y(x_1x_2)\\
&\geq&\left(\frac 12-\gamma\right)n\cdot\binom{|X|}{2}-\frac 12\sum_{x\in X,y\in Y}d_X(xy)\\
&>&\left(\frac 12-\gamma\right)n\cdot\binom{|X|}{2}-\frac 12\left(\gamma^4n^2\cdot|X|+\frac {n^2}8\cdot|X|+\frac {n^2}4\cdot\gamma^2n\right)\\
&\geq&\left(\frac 12-\gamma\right)n\cdot\binom{|X|}{2}-\frac n2\left(\gamma^4n\cdot|X|+\frac {n}8\cdot|X|+\frac {n}4\cdot\gamma^2n\right)\\
&\geq&\left(\frac14-3\gamma\right)n\binom{|X|}{2},
\end{eqnarray*}
the third inequality holds since $d_X(xy)\le |X|$ for any $xy\in E_0\cup E_1$, $d_X(xy)<\gamma^2n$ for $xy\in E_2$ and $|E_0|<\gamma^4n^2$, $|E_1|\leq \frac {n^2}8$ and $|E_2|\le \frac {n^2}4$; the last inequality holds since $(1/2-3\gamma)n\leq|X|\leq (1/2+3\gamma)n$.
Since $$\frac{(\frac14-3\gamma)n\binom{|X|}{2}-\gamma n\binom{|X|}{2}}{|X|}\geq\frac{\frac14-4\gamma}{\frac12+3\gamma}\binom{|X|}{2}\geq\left(\frac12-11\gamma\right)\binom{|X|}{2},$$ there are at least $(\frac12-11\gamma)\binom{|X|}{2}$ pairs $x_1x_2\in\binom{X}{2} $ such that $d_X(x_1x_2)\geq \gamma n$.

Therefore, there are at least $(1-14\gamma+\frac12-11\gamma-1)\binom{|X|}{2}\geq\frac {n^2}{100}$ pairs $x_1x_2\in \binom{X}{2}$ such that $d_X(x_1x_2)\geq \gamma n$ and $d_Y(x_1x_2)\geq \frac 1{10}\gamma^2 n$.

 As what we have done in the proof of Claim~\ref{large E0}, for $x_1x_2\in \binom{X}{2}$, let $k'_{s,t}(x_1x_2)$ be the number of 3-partite 3-graphs $K^3({\{x_1\},\{x_2\}, M})$ with $|M\cap X|=s$ and $|M\cap Y|=t$. Similarly, given $S\subset X$ and $T\subset Y$ with $|S|=s$ and $|T|=t$, let $G'(S,T)$ be the graph with vertex set $X$ and edge set consisting of all pairs $x_1x_2\in \binom{X}2$ such that $x_1x_2z\in H$ for all $z\in S\cup T$. Then, by double counting, we have
\begin{equation*}
\sum_{x_1x_2\in \binom{X}2}k'_{s,t}(x_1x_2)=\sum_{S\in\binom{X}{s},T\in\binom{Y}{t}} |G'(S,T)|.
\end{equation*}
Since there are at least $\frac {n^2}{100}$ pairs $x_1x_2\in \binom{X}2$ such that $d_X(x_1x_2)\geq\gamma n$ and $d_Y(x_1x_2)\geq \frac 1{10}\gamma^2 n$, we have
\begin{equation*}
LHS=\sum_{x_1x_2\in\binom{X}2}\binom{d_X(x_1x_2)}{s}\binom{d_Y(x_1x_2)}{t}\geq\binom{\gamma n}{s}\binom{\frac 1{10}\gamma^2 n}{t}\cdot\frac{n^2}{100}\geq\gamma_1'n^{m+2}.
\end{equation*}
On the other hand,
\begin{eqnarray*}
RHS&=&\left(\sum_{|G'(S,T)|<\gamma_1' n^2}+\sum_{|G'(S,T)|\geq\gamma_1' n^2}\right)|G'(S,T)|\\
&\leq&\sum_{|G'(S,T)|<\gamma_1 n^2}\gamma_1' n^2+\sum_{|G'(S,T)|\geq\gamma_1' n^2}\binom{|X|}{2}.
\end{eqnarray*}
So the number of pairs $(S,T)$ such that $|G'(S,T)|\geq \gamma_1' n^2$ is at least
\begin{equation*}
\left(\gamma_1' n^{m+2}-\gamma_1' n^2\binom{|X|}{s}\binom{|Y|}{t}\right)\big/\binom{|X|}{2}\geq \frac{1}{2}\gamma_1' n^{m+2}\big/(n^2/3)\geq \gamma_1' n^m.
\end{equation*}
Fix such a pair $(S,T)$, by Lemma~\ref{supersaturation},  $G'(S,T)$ contains at leat $\mu_0' n^{2m}$ copies of $K^{2}(m)$, which give us at least $\mu_0' n^{2m}$ copies of $K^3(m)$ of type $(2m+s,t)$. Therefore, $H$ contains at least $\gamma_1' n^m\cdot\mu_0' n^{2m}\geq\mu_1' n^{3m}$ copies of $K^3(m)$ of type $(2m+s,t)$.\quad \rule{1mm}{2mm}


This completes the proof.
\end{proof}

\section{Extremal case}
In this section, we prove Lemmas~\ref{extremalK_m} and \ref{extremalC_4}.
Let $G$ and $H$ be two $k$-graphs on the same vertex set $V$ and let $G\setminus H$ be the graph $(V, E(G)\setminus E(H))$. Suppose that $|V|=n$ and $0\leq \alpha\leq 1$, we say a vertex $v\in H$ {\it $\alpha$-good} with respect to $G$ if $d_{G\setminus H}(v)\leq \alpha n^{k-1}$, otherwise call it {\it $\alpha$-bad}. 
We call $H$ $\alpha$-good with respect to $G$ if all of vertices in $H$ are $\alpha$-good with respect to $G$. First we deal with a special case when $H$ is $\alpha$-good with respect to the extremal graph. We need a lemma from \cite{lattice-based} which follow with some extra work from a perfect packing theorem of Lu and Sz\'ekely~\cite{p-p in almost complete}. Given $V=A\cup B$, let $\mathcal{D}[A,B]$ be the $k$-graph on $V$ consisting of all edges of type $AB^{k-1}$.

\begin{lem}[Lemma 6.1 in \cite{lattice-based}] \label{good D}
Let $K$ be a complete $k$-partite $k$-graph of order $t$ with the first part of size $a_1$. Given $0<\rho\ll1/m$ and a sufficiently large integer $n$, suppose $H$ is a $k$-graph on $n\in t\mathbb{Z}$ vertices with a partition of $V(H)=X\cup Y$ such that $a_1|Y|=(t-a_1)|X|$. Furthermore, assume that $H$ is $\rho$-good with respect to $\mathcal{D}[X,Y]$. Then $H$ contains a $K$-factor.
\end{lem}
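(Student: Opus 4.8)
The plan is to realise the $K$-factor as an explicit ``template'' and to produce it by running the Lovász Local Lemma in the space of random bijections; this is essentially the ``extra work'' needed to deduce the lemma from the Lu--Székely perfect packing theorem~\cite{p-p in almost complete}. First I would record the arithmetic: from $a_1|Y|=(t-a_1)|X|$, $|X|+|Y|=n$ and $t\mid n$ one gets $|X|=a_1(n/t)$ and $|Y|=(t-a_1)(n/t)$, so $|X|$ is a multiple of $a_1$, $|Y|$ is a multiple of $t-a_1$, and a $K$-factor using $n/t$ copies, each with its first part $V_1$ inside $X$ and its remaining parts $V_2,\dots,V_k$ inside $Y$, is feasible on numerical grounds. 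Every edge of such a copy is of type $XY^{k-1}$, so the only relevant data is the set $D:=\mathcal{D}[X,Y]\setminus E(H)$ of ``missing'' such edges; the $\rho$-goodness hypothesis gives $d_D(v)\le\rho n^{k-1}$ for all $v$, and since each edge of $D$ has a unique vertex in $X$, summing over $X$ yields $|D|\le|X|\rho n^{k-1}$.

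Next I would set up the random template. Index the ``first-part slots'' by pairs $(j,p)$ with $j\in[n/t]$ and $p\in V_1$ (there are $a_1(n/t)=|X|$ of them) and the remaining ``slots'' by pairs $(j,q)$ with $q\in V_2\cup\dots\cup V_k$ (there are $(t-a_1)(n/t)=|Y|$ of them). Pick independent uniformly random bijections $\sigma$ from the first-part slots onto $X$ and $\tau$ from the remaining slots onto $Y$. These determine a partition of $V(H)$ into $n/t$ blocks of size $t$, the $j$-th being $\{\sigma(j,p):p\in V_1\}\cup\{\tau(j,q):q\in V_2\cup\dots\cup V_k\}$, and block $j$ spans a copy of $K$ exactly when none of the bad events $B_{j,e}$, $e\in E(K)$, holds, where $B_{j,e}$ says that the image of the $k$ slots of $e$ in copy $j$ is not an edge of $H$. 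By independence of $\sigma$ and $\tau$, the image $k$-set of $B_{j,e}$ is uniform over all pairs $(x,W)$ with $x\in X$, $W\in\binom{Y}{k-1}$, so
\[
\Pr[B_{j,e}]=\frac{|D|}{|X|\binom{|Y|}{k-1}}\le\frac{\rho n^{k-1}}{\binom{(t-a_1)n/t}{k-1}}\le c_K\,\rho
\]
for a constant $c_K=c_K(K)$ and $n$ large. Since $B_{j,e}$ is determined by $\sigma$ and $\tau$ on the slots of copy $j$ alone, in the Local Lemma for random bijections its dependency neighbourhood may be taken to consist only of the at most $|E(K)|-1$ events $B_{j,e'}$ attached to the same copy.

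Finally I would apply the symmetric Local Lemma: as $e\cdot(c_K\rho)\cdot|E(K)|<1$ once $\rho$ is small in terms of $K$ (which is what $\rho\ll 1/m$ supplies), with positive probability no $B_{j,e}$ occurs, and then the template is a genuine $K$-factor of $H$. Equivalently, after the reduction one is exactly in the situation covered by the Lu--Székely perfect packing theorem applied to the ``almost complete'' host $\mathcal{D}[X,Y]\cap E(H)$, and one could invoke that instead. A plain greedy packing would not obviously finish, since after deleting almost all vertices the $t$ remaining ones need not span a copy of $K$, so the Local Lemma (building the whole packing simultaneously) is what does the real work. I expect the delicate points to be the bookkeeping of the template --- encoding both the partition of $X$ into blocks of size $a_1$ and the assignment of $Y$-vertices to the parts $V_2,\dots,V_k$ through the two bijections, rather than by choosing unordered vertex sets --- and checking that bad events from different copies involve disjoint slots, so that the dependency degree is bounded by a constant depending only on $K$; the probability estimate and the divisibility arithmetic are then routine.
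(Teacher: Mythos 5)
This lemma is \emph{not} proved in the paper: it is cited verbatim from Han--Zang--Zhao (Lemma~6.1 of~\cite{lattice-based}), with only the remark that it ``follows with some extra work from a perfect packing theorem of Lu and Sz\'ekely.'' So there is no in-paper argument to compare against, and your task was really to reconstruct the Lu--Sz\'ekely argument. Your overall template --- arithmetic, random bijections onto $X$ and $Y$, bad events ``a prescribed edge slot of a block lands on a non-edge,'' then the local lemma in the space of random injections --- is the right approach and matches the indicated provenance. The probability estimate $\Pr[B_{j,e}]\le c_K\rho$ is also correct, and your observation that a greedy packing cannot finish is the right motivation for invoking the local lemma.

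The gap is in the dependency claim. In the uniform-random-injection model, events depending on disjoint sets of \emph{positions} are \emph{not} mutually independent, nor do they satisfy the lopsided LLL inequality for free: e.g.\ for a random injection $\sigma$, the events $\{\sigma(1)=a\}$ and $\{\sigma(2)=b\}$ with $a\neq b$ are positively correlated. The Erd\H{o}s--Spencer / Lu--Sz\'ekely lopsided dependency graph for canonical events $\{\sigma(x_i)=y_i\}$ is ``share a \textbf{domain} element or a \textbf{range} element,'' not ``share a domain element.'' Your compound event $B_{j,e}$ is a union, over all missing edges $(x,W)\in D$, of canonical events whose ranges sweep out essentially all of $X$ and all of $\binom{Y}{k-1}$; under the compound-event formulation, $B_{j,e}$ is therefore lopsidedependent on \emph{every} $B_{j',e'}$, giving a dependency degree of order $n$, not $|E(K)|-1$. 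The repair is to work with the atomic events $B_{j,e}^{(x,W)}$. Each has probability $\Theta(1/n^k)$, and a fixed $B_{j,e}^{(x,W)}$ is lopsidedependent on $B_{j',e'}^{(x',W')}$ only when they share a slot (same block) or a range vertex ($x=x'$ or $W\cap W'\neq\emptyset$); since every vertex lies in at most $\rho n^{k-1}$ edges of $D$, the number of such neighbours is $O(\rho n^k)$, and $e\cdot p\cdot(d+1)=O(\rho)<1$. This is the balance that actually makes the LLL close, and it is precisely where $\rho\ll 1/|K|$ is used. In short: right strategy, right probability estimate, but the claimed constant dependency degree is incorrect; the argument must pass to atomic events and the lopsided (shared-domain-or-range) dependency graph, where degree and probability scale together.
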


\begin{lem}\label{good}
Let $\alpha, \epsilon$ be any given constants with $0<\epsilon\ll \alpha$ and $m$ be an integer. Suppose that $H$ is a 3-graph with large enough order $n$ and $V(H)$ has a partition $A\cup B$ with $||A|-|B||<\epsilon n$ such that $H$ is $\alpha$-good with respect to $\mathcal{B}[A,B]$. Then $H$ contains a $K^3(m)$-tiling covering all but at most $2\epsilon n$ vertices. Furthermore, if $n\in12m\mathbb{Z}$ and $|A|=|B|$. Then $H$ contains a $K^3(m)$-factor.
\end{lem}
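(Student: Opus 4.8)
The plan is to split the vertex set $A\cup B$ into blocks each of which spans a $\mathcal{D}[X,Y]$-type configuration with the right part ratios, and then apply Lemma~\ref{good D} inside each block. Recall that a copy of $K^3(m)$ can be placed so that it is of type $(3m-1,1)$ with respect to a bipartition: taking one of the three parts of $K^3(m)$ to have a single vertex in $B$ and the remaining $3m-1$ vertices in $A$ realises $K^3(m)$ as having exactly one vertex in the ``$B$-side''. More useful here: the edge set of $\mathcal{B}[A,B]$ contains all triples with exactly one vertex in $A$, i.e.\ it contains $\mathcal{D}[B,A]$. So a copy of $K^3(m)$ with $3m-1$ vertices in $B$ and $1$ vertex in $A$ lives in $\mathcal{D}[B,A]$; symmetrically, since $\mathcal{B}[A,B]$ also contains all triples entirely inside $A$, copies of $K^3(m)$ sitting inside $A$ are available too, but those are not what balances the two sides. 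The key point is that a $K^3(m)$-factor of $H$ that is (essentially) $\alpha$-good with respect to $\mathcal{B}[A,B]$ should be built from copies each using $3m-1$ vertices on one side and $1$ on the other. If we use $k$ copies of type $(1,3m-1)$ (one vertex in $A$, $3m-1$ in $B$) and $\ell$ copies of type $(3m-1,1)$, covering everything forces $k+\ell(3m-1)=|A|$ and $k(3m-1)+\ell=|B|$, which has a nonnegative integer solution precisely when $|A|\equiv|B|\pmod{3m-1}$ in a suitable range; since $||A|-|B||<\epsilon n$ and $n\in 12m\mathbb Z$, $|A|=|B|$ makes $k=\ell$ work, giving the ``Furthermore'' statement, while in general one covers all but $O(\epsilon n)$ vertices.

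Concretely, here is how I would carry it out. First, discard at most $2\epsilon n$ vertices so that the remaining vertex classes $A'\subseteq A$ and $B'\subseteq B$ satisfy $|A'|=|B'|$ and $|A'|\in 3m(3m-1)\mathbb Z$ (adjusting divisibility); this is possible since $||A|-|B||<\epsilon n$. Second, partition $A'=A_1\cup A_2$ and $B'=B_1\cup B_2$ with $|A_1|=|B_2|$ and $|A_2|=|B_1|$ and all four sizes divisible by the relevant constant, chosen so that $X_1:=B_1$, $Y_1:=A_1$ gives the ratio $a_1|Y_1|=(t-a_1)|X_1|$ required by Lemma~\ref{good D} for $K=K^3(m)$ with $a_1=1$, $t=3m$ — i.e.\ $|A_1|=(3m-1)|B_1|$ — and symmetrically $X_2:=A_2$, $Y_2:=B_2$ with $|B_2|=(3m-1)|A_2|$. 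Third, check that $H[A_1\cup B_1]$ is $\rho$-good with respect to $\mathcal{D}[B_1,A_1]$: every edge of $\mathcal{D}[B_1,A_1]$ (one vertex in $B_1$, two in $A_1$) is an edge of $\mathcal{B}[A,B]$, so the missing-edge count at any vertex is bounded by its $\mathcal{B}[A,B]$-deficiency, which is at most $\alpha n^2 \le \rho (|A_1\cup B_1|)^2$ after choosing $\epsilon$ small so that each block still has linear size. Fourth, apply Lemma~\ref{good D} to $H[A_1\cup B_1]$ and to $H[A_2\cup B_2]$ to obtain $K^3(m)$-factors of each, whose union is a $K^3(m)$-tiling of $H$ covering all but the at most $2\epsilon n$ discarded vertices. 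When $n\in 12m\mathbb Z$ and $|A|=|B|$, the divisibility bookkeeping can be arranged to discard nothing, yielding a full $K^3(m)$-factor.

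The main obstacle is the divisibility and sizing bookkeeping in the second and third steps: one must choose $|A_1|,|B_1|,|A_2|,|B_2|$ simultaneously satisfying $|A_1|=(3m-1)|B_1|$, $|B_2|=(3m-1)|A_2|$, $|A_1|+|A_2|=|A'|$, $|B_1|+|B_2|=|B'|$, each divisible by $3m$ (so each block size lies in $t\mathbb Z$ as Lemma~\ref{good D} demands), and all only $O(\epsilon n)$ away from the prescribed $|A|,|B|$ — feasible since $3m$ and $3m-1$ are coprime and $||A|-|B||<\epsilon n$, but it requires care to confirm a valid solution always exists and that the ``good'' hypothesis survives passing to an induced subgraph of linear but smaller order (which is why we need $\epsilon\ll\alpha$). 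Everything else is a direct invocation of Lemma~\ref{good D}.
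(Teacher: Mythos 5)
Your plan collapses at the very first step, because you have misidentified which copies of $K^3(m)$ can live inside $\mathcal{B}[A,B]$.  Recall $\mathcal{B}[A,B]$ has edge set $\{e : |e\cap A|=1 \text{ or } 3\}$, i.e.\ it contains the edges of type $ABB$ and $AAA$ but \emph{not} those of type $AAB$ or $BBB$.  You claim that $\mathcal{B}[A,B]$ contains $\mathcal{D}[B,A]$ (edges with one vertex in $B$, two in $A$); this is false --- those are $AAB$ edges, excluded by definition.  What is true is $\mathcal{D}[A,B]\subseteq\mathcal{B}[A,B]$.  Because of this, a copy of $K^3(m)$ placed with $3m-1$ vertices in $B$ and $1$ in $A$ does \emph{not} sit inside $\mathcal{B}[A,B]$: with two entire colour classes in $B$ and a class split $1/(m-1)$ between $A$ and $B$, any edge whose third vertex is one of the $m-1$ vertices of $B$ is a $BBB$ edge and is missing.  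In fact, if every edge of a $K^3(m)$-copy is required to be $ABB$ or $AAA$, a short case check shows that the only admissible types are $(m,2m)$ (one class entirely in $A$, two entirely in $B$, every edge $ABB$) and $(3m,0)$ (all in $A$, every edge $AAA$).  So the building blocks of any $K^3(m)$-factor of a graph essentially equal to $\mathcal{B}[A,B]$ are types $(m,2m)$ and $(3m,0)$, not $(1,3m-1)$ and $(3m-1,1)$; this also explains the hypothesis $n\in 12m\mathbb{Z}$, since with $|A|=|B|$ one needs $n/(4m)$ copies of type $(m,2m)$ and $n/(12m)$ of type $(3m,0)$.

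The same misreading propagates into your use of Lemma~\ref{good D}.  There $a_1$ is the size of the first part of the target $k$-partite $k$-graph; for $K=K^3(m)$ every part has size $m$, so $a_1=m$ and $t=3m$, giving the ratio constraint $m|Y|=2m|X|$, i.e.\ $|Y|=2|X|$, \emph{not} $|Y|=(3m-1)|X|$ as you wrote with $a_1=1$.  Your third step then tries to check $\rho$-goodness of $H[A_1\cup B_1]$ with respect to $\mathcal{D}[B_1,A_1]$ (one vertex in $B$, two in $A$): as observed above these are $AAB$ edges, which $\mathcal{B}[A,B]$ deliberately omits, so the goodness hypothesis gives you no control over them and this check cannot succeed.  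The correct route (which the paper takes) is to discard $O(\epsilon n)$ vertices to reach $|A_0|=|B_0|=6mn'$, split $A_0$ into $A_1,A_2,A_2'$ of sizes $3mn',mn',2mn'$, and apply Lemma~\ref{good D} separately to $H[A_1\cup B_0]$ with respect to $\mathcal{D}[A_1,B_0]$ (ratio $6mn'=2\cdot 3mn'$, $ABB$ edges) and to $H[A_2\cup A_2']$ with respect to $\mathcal{D}[A_2,A_2']$ (ratio $2mn'=2\cdot mn'$, $AAA$ edges); goodness transfers because both target hypergraphs are subgraphs of $\mathcal{B}[A,B]$.  Your high-level idea of "split into blocks and apply Lemma~\ref{good D} to each" is right, but the block types, the ratio $a_1:(t-a_1)$, and the containment $\mathcal{D}\subseteq\mathcal{B}$ all need to be set up around types $(m,2m)$ and $(3m,0)$ rather than $(1,3m-1)$ and $(3m-1,1)$.
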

\begin{proof} Without loss of generality, assume $|A|\le |B|$. Let $|A|=6mn'+s$ and $|B|=6mn'+t$, where $0\le s<6m$ and $t=|B|-|A|+s< 2\epsilon n$.
Let $A_0$ and $B_0$ be the sets obtained from $A$ and $B$ by deleting $s$ and $t$ vertices from $A$ and $B$, respectively. Then $|A_0\cup B_0|=12mn'\in 12m\mathbb{N}$. Let $H_0=H[A_0\cup B_0]$ and $n_0:=V(H_0)$. Then $H_0$ must be $\alpha'$-good with respect to $\mathcal{B}[A_0, B_0]$ for some constant $\alpha'>0$.
Partition $A_0$ into three subsets $A_1,~A_2,~A_2'$ with $|A_1|=3mn', |A_2|=mn'$ and $|A_2'|=2mn'$. Let $H_1=H'[A_1\cup B_0]$ and $H_2=H'[A_2\cup A_2']$. Then we have $|V(H_1)|=\frac{3}{4} n_0$ and $|V(H_2)|=\frac{1}{4}n_0$. One can examine that $H_1$ is $\frac{16}{9}\alpha'$-good with respect to $\mathcal{D}[A_1,B_0]$ and $H_2$ is $16\alpha'$-good with respect to $\mathcal{D}[A_2,A_2']$. Set $K=K^3(m)$ and $t=3m$. Applying  Lemma~\ref{good D} to $H_1$ and $H_2$ with parameters $\frac{16}{9}\alpha'$ and $16\alpha'$, we obtain $K^3(m)$-factors $\mathcal{M}_1$ in $H_1$ and $\mathcal{M}_2$ in $H_2$, respectively. Therefore, $\mathcal{M}_1\cup\mathcal{M}_2$ is a desired $K^3(m)$-factor of $H_0$.

 If $n\in12m\mathbb{Z}$ and $|A|=|B|$ then $H_0=H$. Hence $\mathcal{M}_1\cup\mathcal{M}_2$ is a $K^3(m)$-factor of $H$.
\end{proof}

\noindent{\bf Remark: } Note that, in the above proof, the $K^3(m)$-factors $\mathcal{M}_1$ and $\mathcal{M}_2$ have the following property:\\
(1) Each member in $\mathcal{M}_1$ (resp. $\mathcal{M}_2$) has type $(m,2m)$ (resp. $(3m,0)$) with respect to the partition $A\cup B$, and \\
(2) both $|\mathcal{M}_1|(\sim\frac n4)$ and  $|\mathcal{M}_2|(\sim\frac n{12})$ are large enough.

The following classical result~\cite{KST} also will be used.
\begin{lem}[K\"ov\'ari-S\'os-Tur\'an, 1954]\label{EST}
For all $t\ge s\ge2$, the Tur\'an function of the complete bipartite graph $K^2(s,t)$ is   $$ex_2(n,K^2(s,t))\leq\frac12((t-1)^{1/s}n^{2-1/s}+(s+1)n).$$
\end{lem}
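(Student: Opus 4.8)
The statement is the classical K\"ov\'ari--S\'os--Tur\'an bound, so the plan is simply to reproduce the standard double-counting argument on $s$-stars. Let $G$ be a $K^2(s,t)$-free graph on $n$ vertices with $e(G)=ex_2(n,K^2(s,t))$ edges. The first step is to count the pairs $(v,S)$ with $v\in V(G)$ and $S\in\binom{N(v)}{s}$: grouping by $v$ this number is $\sum_{v\in V(G)}\binom{d(v)}{s}$, while grouping by the $s$-set $S$ it equals $\sum_{S\in\binom{V(G)}{s}}c(S)$, where $c(S)=\big|\bigcap_{u\in S}N(u)\big|$ is the number of common neighbours of $S$.

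Next I would observe that $K^2(s,t)$-freeness forces $c(S)\le t-1$ for every $s$-set $S$: if $c(S)\ge t$, then $S$ together with any $t$ of its common neighbours spans a copy of $K^2(s,t)$ in $G$ (the two sides are automatically disjoint, since no vertex lies in its own neighbourhood). Hence $\sum_{v}\binom{d(v)}{s}\le(t-1)\binom{n}{s}$. Then I would apply convexity: extending $\binom{x}{s}$ to the convex function on $[0,\infty)$ that is $0$ for $x\le s-1$ and $\frac1{s!}x(x-1)\cdots(x-s+1)$ for $x\ge s-1$, Jensen's inequality gives $\sum_{v}\binom{d(v)}{s}\ge n\binom{\bar d}{s}$, where $\bar d=2e(G)/n$ is the average degree. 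If $\bar d\le s-1$ the desired bound is immediate since then $e(G)\le(s-1)n/2$, so assume $\bar d>s-1$; using $\binom{\bar d}{s}\ge(\bar d-s+1)^s/s!$ and $\binom{n}{s}\le n^s/s!$ one gets $(\bar d-s+1)^s\le(t-1)n^{s-1}$, hence $\bar d\le(t-1)^{1/s}n^{1-1/s}+s-1$, and finally $e(G)=\tfrac12 n\bar d\le\tfrac12\big((t-1)^{1/s}n^{2-1/s}+(s-1)n\big)$, which is at most the stated bound.

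There is no real obstacle here; the only point that needs a little care is the convexity step, namely ensuring that the inequality $\sum_v\binom{d(v)}{s}\ge n\binom{\bar d}{s}$ is applied to a genuinely convex extension of the binomial coefficient, so that vertices of degree below $s-1$ cause no trouble --- which is why I would record the convention above and dispose of the case $\bar d\le s-1$ separately. Everything else is routine arithmetic, and the loose constant $s+1$ in place of $s-1$ in the claim absorbs any minor slack in these estimates.
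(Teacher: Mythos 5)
Your double-counting argument is the standard and correct proof of the K\"ov\'ari--S\'os--Tur\'an theorem; the paper itself states this lemma as a cited classical result (reference [KST]) and gives no proof, so there is no internal argument to compare against. Your version in fact yields the slightly sharper constant $s-1$ in place of the stated $s+1$, and the convexity step --- with the convention of extending $\binom{x}{s}$ by zero below $s-1$ so that Jensen applies cleanly --- is handled correctly.
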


\subsection{Proofs of Lemmas~\ref{extremalK_m} and \ref{extremalC_4}}
Since $H$ is $\gamma$-extremal, there is a partition  $V=A\cup B$ such that {$|A|\le |B|\le\lceil n/2\rceil$}  and $H$ is $\gamma$-extremal with respect to $\mathcal{B}[A,B]$. Set $\gamma_1=\sqrt{\gamma}$. By the definition of $\gamma$-extremal, all but at most $\gamma_1n$ vertices in $V$ are $\gamma_1$-good with respect to $\mathcal{B}[A,B]$. Let $A_0$ and $B_0$ be the sets of $\gamma_1$-bad vertices in $A$ and  $B$, respectively. Then $|A_0\cup B_0|\leq\gamma_1n$. For a vertex $x\in A_0\cup B_0$, we call it {\it $B$-acceptable} if {$|E(H_x)\cap E(K^2(A, B))|\ge \frac{n^2}{40}$};
otherwise we call it {\it $A$-acceptable}. Note that $|E(H_x)|\ge\delta_1(H)\geq(n-1)(\lfloor n/2\rfloor-1)/2$. If $x$ is $A$-acceptable then $|E(H_x)\cap \binom A2 |\ge \frac 34 \binom{|A|}{2}$ and $|E(H_x)\cap \binom B2|\ge \frac 34 \binom{|B|}{2}$.
Now move all $A$-acceptable vertices into $A$  and $B$-acceptable vertices into $B$, we get a new partition $V=A'\cup B'$ with the property that

  1) $n/2-\gamma_1 n\leq|A'|, |B'|\leq n/2+\gamma_1 n$ (since $|A_0\cup B_0|\leq\gamma_1n$);

  2) $H$ $\gamma_2$-contains $\mathcal{B}[A',B']$ for some constant $\gamma_2\gg\gamma_1$.

Moreover, we can partition $A'$ into $A_1, A_2$ so that:

 A1) Every vertex in $A_1$ is $\gamma_2$-good with respect to $\mathcal{B}[A',B']$;

 A2) $|A_2|\leq\gamma_1n$;

 A3) for every $x\in A_2$, $|E(H_x)\cap \binom{A'}{2}|\ge \frac 23 \binom{|A'|}{2}$ and $|E(H_x)\cap \binom{B'}{2}|\ge \frac 23 \binom{|B'|}{2}$.


Similarly, there is a partition $B_1,~B_2$ of $B'$ so that:

 B1) Every vertex in $B_1$ is $\gamma_2$-good with respect to $\mathcal{B}[A',B']$;

 B2) $|B_2|\leq\gamma_1n$;

 B3) for every $x\in B_2$, $|E(H_x)\cap E(K^2(A', B'))|\ge \frac{n^2}{50}$.

 Our strategy is to find vertex-disjoint $K^3(m)$-tiling $\mathcal{K}_1,\mathcal{K}_2,\mathcal{K}_3,\mathcal{K}_4$ in $H$ so that the union of them is a
$K^3(m)$-factor of $H$, in which $\mathcal{K}_1$ is so-called 'parity breaking' copies dealing with the case $|B|\not\equiv0 \pmod{2m}$, $\mathcal{K}_2$ covers all vertices in $A_2\cup B_2$, and $\mathcal{K}_3$ is used to
guarantee the divisibility condition required by Lemma~\ref{good} after removing the vertices covered by $\mathcal{K}_1$ and $\mathcal{K}_2$.
Furthermore, $\mathcal{K}_1,\mathcal{K}_2,\mathcal{K}_3$ are all small enough such that the graph obtained by deleting $\mathcal{K}_1,\mathcal{K}_2,\mathcal{K}_3$ is $\gamma_3$-good for some constant $\gamma_3$. Finally, we apply Lemma~\ref{good} to obtain $\mathcal{K}_4$. 

In Claims \ref{d-b-breaker for K_m} and \ref{d-b-breaker for C_4}, we  show that such 'parity breaking' copies of $K^3(m)$ (resp. $K_{m,m}^3$) do exist.

\begin{claim}\label{d-b-breaker for K_m}
If $\delta_2(H)\ge n/2+m^{1/m}n^{1-1/m}$, then $H$ contains either $2m-1$ disjoint copies of $K^3(m)$ of type $(m+1,2m-1)$ or $2m-1$ disjoint copies of $K^3(m)$ of type $(3m-1, 1)$.
\end{claim}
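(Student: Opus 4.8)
The plan is to find, inside the extremal structure $V=A'\cup B'$ with $|A'|,|B'|$ close to $n/2$, a single copy of $K^3(m)$ that uses an \emph{odd} number of vertices from $B'$, and then to replicate it $2m-1$ times disjointly. A copy of $K^3(m)$ has $3m$ vertices split as three parts of size $m$; if the intersection with $B'$ is odd, then since the total is $3m$ the intersection with $A'$ has the opposite parity, and the only ``balanced-looking'' small-deviation types are $(m+1,2m-1)$ (one part of the $K^3(m)$ inside $B'$ contributes $m-1$, i.e. one vertex moved to $A'$) or $(3m-1,1)$ (all but one vertex in $A'$). So the claim is really: at least one of these two types of copy exists, and then by a greedy/supersaturation argument we get $2m-1$ vertex-disjoint copies of whichever type we found, since $2m-1$ is a constant and removing $O(1)$ vertices barely changes the degree condition.

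First I would set up the counting. Using $\delta_2(H)\ge n/2+m^{1/m}n^{1-1/m}$, I want to show that either there are many edges of ``type $(2,1)$'' relative to $A',B'$ that are \emph{not} in $\mathcal{B}[A',B']$-like positions, or there are many copies of $K^2(m-1,m)$-like configurations that can be completed. Concretely, pick two vertices $u\in A'$, $v\in B'$; the link graph $H_{uv}$ has at least $n/2+m^{1/m}n^{1-1/m}$ edges on $V\setminus\{u,v\}$. In the ``ideal'' $\mathcal{B}[A',B']$ the pair $uv$ (type $AB$) would have neighbors only forming edges with one more $A$-vertex (making an $A$-$A$-$B$ edge) — wait, in $\mathcal{B}[A,B]$ an edge has $|e\cap A|\in\{1,3\}$, so a pair $u\in A,v\in B$ extends only by a vertex of $A$; its codegree is thus $\approx|A|\approx n/2$. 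The surplus $m^{1/m}n^{1-1/m}$ over $n/2$ forces, by a K\"ov\'ari–S\'os–Tur\'an argument (Lemma~\ref{EST}), that for many choices of a set $S$ of $m-1$ vertices in $A'$ and a vertex in $B'$, there is a large common neighborhood, and in particular one can build a $K^3(\{u\},\{v\},M)$ with $M$ having $m-1$ vertices in $A'$ and one in $B'$ — giving type $(m+1,2m-1)$. The point of the threshold $m^{1/m}n^{1-1/m}$ is exactly that $ex_2(n,K^2(m,m))\approx \frac12 m^{1/m}n^{2-1/m}$, so exceeding $n/2+m^{1/m}n^{1-1/m}$ in codegree everywhere prevents $H$ from avoiding a suitable $K^2(m,m)$ pattern needed to embed the ``odd'' copy. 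If instead the relevant bipartite-avoidance fails in the other direction, one gets copies of type $(3m-1,1)$ living almost entirely in $A'$, using the fact that $H[A']$ is nearly complete (since $H$ is $\gamma$-extremal and most of $A'$ is $\gamma_2$-good, so $\binom{A'}{2}$-edges through most pairs are dense).

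The key steps in order: (1) fix the partition $A'\cup B'$ from the preamble and record that $n/2-\gamma_1 n\le |A'|,|B'|\le n/2+\gamma_1 n$ and $H$ $\gamma_2$-contains $\mathcal{B}[A',B']$; (2) show that the codegree surplus $m^{1/m}n^{1-1/m}$ forces, via Lemma~\ref{EST} applied to appropriate link graphs, the existence of one copy $F_0$ of $K^3(m)$ of type $(m+1,2m-1)$ \emph{or} of type $(3m-1,1)$ — this is the heart of the argument; (3) observe $2m-1=O(1)$, so after deleting the $\le (2m-1)\cdot 3m$ vertices of any partial tiling, every codegree drops by at most $O(1)$ and the partition sizes shift by $O(1)$, so the same existence argument applies again; iterate to extract $2m-1$ pairwise disjoint copies all of the same type as $F_0$.

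The main obstacle I expect is step (2): pinning down precisely why the threshold $n/2+m^{1/m}n^{1-1/m}$ (and not merely $n/2+o(n)$) is what is needed, and making the K\"ov\'ari–S\'os–Tur\'an bound interface cleanly with the $\gamma$-extremal structure. Specifically, one must argue that if \emph{no} copy of type $(m+1,2m-1)$ exists, then the bipartite ``cross'' graphs between $B'$ and $A'$ encoded by links of $A'$-pairs are all $K^2(m,m)$-free, which by Lemma~\ref{EST} caps their size by about $\frac12 m^{1/m}n^{2-1/m}$, and then a double-counting over all pairs in $\binom{A'}{2}$ shows the total codegree mass into $B'$ is too small — contradicting the assumed codegree lower bound unless the ``missing mass'' is absorbed by $H[A']$, which is exactly the case giving type $(3m-1,1)$ copies. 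Balancing these two cases and the error terms $\gamma_1,\gamma_2$ against the sub-linear surplus is the delicate part; everything else is routine greedy extraction.
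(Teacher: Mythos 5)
Your high-level plan — reduce to finding a single ``odd'' copy of $K^3(m)$ avoiding any $O(1)$-set $W$ and then greedily extract $2m-1$ of them, with the odd copy produced by combining the sub-linear codegree surplus with the K\"ov\'ari--S\'os--Tur\'an bound — is exactly the paper's plan, and you correctly identify why the exponent $1-1/m$ appears. However, the step that actually produces the odd copy has concrete gaps.

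First, your parenthetical ``correction'' about $\mathcal{B}[A,B]$ goes the wrong way: for $u\in A$, $v\in B$, an edge $uvw\in\mathcal{B}[A,B]$ needs $|\{u,v,w\}\cap A|\in\{1,3\}$, so $w$ must lie in $B$, not $A$; in the ideal graph $N_{\mathcal{B}}(uv)\subseteq B$, and the surplus over $n/2$ forces ``illegal'' neighbours in $A'$ only when $|B'|\le n/2$. This is the whole point of the paper's case split on $|B'|\lessgtr n/2$: it cleanly determines whether the surplus codegree of $A'B'$-pairs spills into $A'$ (giving type $(m+1,2m-1)$) or the surplus codegree of $A'A'$-pairs spills into $B'$ (giving type $(3m-1,1)$). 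Your contradiction-style dichotomy (``if no $(m+1,2m-1)$ copy exists, then \dots'') is vaguer and does not obviously close: the absence of a $K^3(m)$ copy does not immediately make a link graph $K^2(m,m)$-free, because the KST step only finds a $K^2(m,m)$ inside a suitably filtered graph. That filter is the other missing ingredient: the paper first restricts to pairs $ab$ whose $H$-codegree into the ``correct'' side is $(1-\sqrt{\gamma_2})$-full (an auxiliary graph $G$), which is what guarantees that the $K^2(m,m)$ found in some $N_H(v)\cap E(G)$ has a large common neighbourhood and therefore completes to a genuine $K^3(m)$. Without this, finding a $K^2(m,m)$ in a link gives you only a $K^3(1,m,m)$, not a $K^3(m)$. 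Relatedly, your ``$K^3(\{u\},\{v\},M)$'' with $|M|=m$ is a $K^3(1,1,m)$ on $m+2$ vertices, not a $K^3(m)$ on $3m$ vertices; the paper's construction is: one apex $v$, a $K^2(m,m)$ with $m$ vertices on each side, and $m-1$ additional vertices drawn from the common neighbourhood of all $m^2$ edges of that $K^2(m,m)$.

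So: the overall shape and the KST/threshold insight are right, but (i) the direction of the $\mathcal{B}[A,B]$ codegree is reversed, (ii) the case split on $|B'|\lessgtr n/2$ — which is what makes the argument a direct construction rather than a contradiction — is missing, and (iii) the intermediate auxiliary graph guaranteeing a completable $K^2(m,m)$ is missing, and without it the KST step does not actually yield a $K^3(m)$.
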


\noindent{\it Proof of Claim \ref{d-b-breaker for K_m}:}
If we can find  a copy of $K^3(m)$ of type $(m+1,2m-1)$ or $(3m-1,1)$ avoiding any given vertex set $W\subset V$ with $|W|\leq C$ for some constant $C\geq 6m^2$, then {we can greedily find $2m-1$ disjoint copies of $K^3(m)$ of desired type because we always can find a new copy of $K^3(m)$ avoiding the vertices of copies of $K^3(m)$ we have found (since $C\geq 6m^2$)}. So the rest of the proof is to show the statement is true.  Choose any vertex set $W\subset V$ with $|W|\leq C$ for some constant $C\geq 6m^2$.
We split the proof into two cases depending on the size of $B'$.

First assume that $|B'|\leq n/2$. For any $a\in A', b\in B'$, we have $|N_H(ab)\cap A'|\geq m^{1/m}n^{1-1/m}$ since $\delta_2(H)\geq n/2+m^{1/m}n^{1-1/m}$. Construct an auxiliary bipartite graph $G$ as follows: set $V(G)=A'\cup B'$ and $E(G)$ consists of all pairs $ab$ with $a\in A', b\in B'$ and $|N_H(ab)\cap B'|\geq(1-\sqrt{\gamma_2})|B'|$. Since $H$ $\gamma_2$-contains $\mathcal{B}[A', B']$, there are at most $\gamma_2 n^3$ $A'B'B'$-edges missing in $H$. {Clearly, we have that at most $2\gamma_2 n^3/(\sqrt{\gamma_2}|B'|)\leq 8\sqrt{\gamma_2}n^2$ pairs $ab$ missing in $G$.} {By double-counting the number of ordered pairs $(v, e)$ with $v\in A'\setminus W$ and $e\in N_H(v)\cap E(G-W)$, we have
{$$\sum_{v\in A'\setminus W}|N_H(v)\cap E(G-W)|\geq {(|G|-Cn)}\cdot(m^{1/m}n^{1-1/m}-|A'\cap W|).$$
Note that $(|G|-Cn)(m^{1/m}n^{1-1/m}-|A'\cap W|)/|A'\setminus W|\geq \frac12(m-\frac12)^{1/m}{}n^{2-1/m}$. }We can choose a vertex $v\in A'\setminus W$ such that $|N_H(v)\cap E(G-W)|\geq \frac12(m-\frac12)^{1/m}{}n^{2-1/m}$. Lemma \ref{EST} implies that there exists a copy of $K^{2}(m)$, denoted by $M$, in $N_H(v)\cap E(G-W)$. By the definition of $E(G)$, $$\left|\left(\bigcap_{e\in M}N_H(e)\right)\cap\left(B'\setminus W\right)\right|\geq |B'|-m^2\sqrt{\gamma_2}|B'|-C\geq m-1$$
for sufficiently large $n$ and small $\gamma_2$.
Pick such any $m-1$ vertices together with $v$ and $V(M)$, we obtain a copy of $K^3(m)$ of type $(m+1,2m-1)$ avoiding $W$.}

Now assume $|B'|>n/2$. For any pair $aa'\in\binom{A'}{2}$, we have $|N_H(aa')\cap B'|\geq m^{1/m}n^{1-1/m}$.
Construct another auxiliary graph $G'$ as follows: set $V(G')=A'$ and $E(G')$ consists of all pairs $aa'\in \binom{A'}2$ with $|N_H(aa')\cap A'|\geq(1-\sqrt{\gamma_2})|A'|$. Similarly, {since there are at most $\gamma_2 n^3$ $A'A'A'$-edges missing in $H$, there are at most $3\gamma_2 n^3/(\sqrt{\gamma_2}|A'|)\leq 8\sqrt{\gamma_2}n^2$ edges $aa'$ missing in $G'$.} {By double-counting the number of ordered pairs $(v, e)$ with $v\in B'\setminus W$ and $e\in N_H(v)\cap E(G'-W)$, we have
{$$\sum_{v\in B'\setminus W}|N_H(v)\cap E(G'-W)|\geq (|G'|-C|A'|)\cdot(m^{1/m}n^{1-1/m}-|B'\cap W|).$$
Note that $(|G'|-C|A'|)(m^{1/m}n^{1-1/m}-|B'\cap W|)/|B'\setminus W|> \frac12m^{1/m}|A'|^{2-1/m}$.}
We can choose a vertex $v\in B'\setminus W$ such that $|N_H(v)\cap E(G'-W)|> \frac12m^{1/m}|A'|^{2-1/m}$.
Lemma \ref{EST} implies that there is a copy of $K^{2}(m)$, denoted by $M'$, in $N(v)\cap E(G'- W)$. By the definition of $E(G')$,
$$\left|\bigcap_{e\in M'}N(e)\cap (A'\setminus W)\right|\geq |A'|-m^2\sqrt{\gamma_2}|A'|-C\geq m-1.$$
Pick any such $m-1$ vertices together with $v$ and $V(M')$, we obtain a copy of $K^3(m)$ of of type $(3m-1,1)$ avoiding $W$.
This completes the proof of claim \ref{d-b-breaker for K_m}. \quad \rule{1mm}{2mm}

\begin{claim}\label{d-b-breaker for C_4}
If $\delta_2(H)$ satisfies (\ref{codegree condition}) in Theorem~\ref{mainC4}, then $H$ contains a copy $K$ of $K^3_{m,m}$ of type $(m+1,2m-1)$ or $(3m-1,1)$, unless $|B'|=\lfloor n/2\rfloor$ when $n\equiv1\pmod4$. Furthermore, for any $0\leq t\leq m$, $H$ contains a copy $K'$ of $K^3_{m,m}$ of type $(m+2t,2m-2t)$ disjoint from $K$.
\end{claim}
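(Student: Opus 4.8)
The plan is to imitate the two-case analysis of Claim~\ref{d-b-breaker for K_m} and show that $K^3_{m,m}$-copies, being spanning subgraphs of $K^3(m)$, are even easier to produce. Recall that $K^3_{m,m}$ has vertex set $(X_1\cup\dots\cup X_m)\cup Y$ with $|X_i|=2$ and $|Y|=m$, so a copy of type $(m+1,2m-1)$ means we may take the $m$ ``core'' pairs $X_1,\dots,X_m$ with one vertex in $A'$ and one in $B'$ each, and take the $Y$-set (size $m$) together with one more vertex entirely inside $B'$ — actually it is cleaner to simply observe that every copy of $K^3(m)$ contains a copy of $K^3_{m,m}$ on the same vertex set, so any $K^3(m)$ of type $(m+1,2m-1)$ yields a $K^3_{m,m}$ of the same type. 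First I would therefore try to rerun the argument of Claim~\ref{d-b-breaker for K_m} essentially verbatim, the only difference being the weaker codegree hypothesis~(\ref{codegree condition}): now $\delta_2(H)\ge\lceil n/2\rceil-1$ (or $\lfloor n/2\rfloor-1$), so for $a\in A',b\in B'$ with $|B'|\le\lfloor n/2\rfloor$ we only get $|N_H(ab)\cap A'|\ge\lceil n/2\rceil-1-|B'|$, which is positive and $\Omega(n)$ unless $|B'|=\lfloor n/2\rfloor$ in the $n\equiv1\pmod4$ case; that single degenerate configuration is exactly the exception stated in the claim.

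The key steps, in order: (i) reduce to finding a single copy avoiding a bounded set $W$, exactly as in Claim~\ref{d-b-breaker for K_m} (greedy iteration is not needed here since we want only $K$ and then $K'$, but keeping the avoidance makes the ``$K'$ disjoint from $K$'' part automatic — apply the one-copy statement with $W=V(K)$). (ii) Split on $|B'|$ versus $\lfloor n/2\rfloor$. When $|B'|\le\lfloor n/2\rfloor$ and we are not in the exceptional case, for $a\in A',b\in B'$ we have $|N_H(ab)\cap A'|\ge c n$ for some $c>0$; build the auxiliary bipartite graph $G$ on $A'\cup B'$ whose edges are pairs $ab$ with $|N_H(ab)\cap B'|\ge(1-\sqrt{\gamma_2})|B'|$ (using that $H$ $\gamma_2$-contains $\mathcal B[A',B']$, so only $O(\sqrt{\gamma_2})n^2$ pairs are missing from $G$). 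Double-count pairs $(v,e)$ with $v\in A'\setminus W$, $e\in N_H(v)\cap E(G-W)$ to find a vertex $v$ with $|N_H(v)\cap E(G-W)|\ge c'n^2$; since $c'n^2>ex_2(n,K^2(m,m))$ by Lemma~\ref{EST}, extract a $K^2(m)$ inside $N_H(v)\cap E(G-W)$, then pick $m-1$ common $B'$-neighbours of its edges outside $W$ (possible since each edge misses $\le m^2\sqrt{\gamma_2}|B'|$ of $B'$). These $v$, the $2m$ vertices of the $K^2(m)$, and the $m-1$ extra vertices span a $K^3(m)$, hence a $K^3_{m,m}$, of type $(m+1,2m-1)$. (iii) When $|B'|>\lfloor n/2\rfloor$ (equivalently $|A'|<\lceil n/2\rceil$, which already forces us out of the exceptional regime), symmetrically: for $aa'\in\binom{A'}{2}$ we have $|N_H(aa')\cap B'|\ge cn$, build $G'$ on $A'$ with edges $aa'$ satisfying $|N_H(aa')\cap A'|\ge(1-\sqrt{\gamma_2})|A'|$, double-count over $v\in B'\setminus W$ to get $v$ with many $G'$-edges in its link, pull out a $K^2(m)$ via Lemma~\ref{EST}, and complete with $m-1$ vertices of $A'\setminus W$ common to all edges of the $K^2(m)$, giving a $K^3(m)\supseteq K^3_{m,m}$ of type $(3m-1,1)$. (iv) For the ``furthermore'' part, apply the statement just proved with $W:=V(K)$, $|W|=3m$, to obtain $K'$ of type $(m+1,2m-1)$ (or $(3m-1,1)$) disjoint from $K$; to get the flexible type $(m+2t,2m-2t)$ one uses the same auxiliary-graph/$K^2(m)$ extraction but splits the ``$Y$-part'' of $K^3_{m,m}$ between $A'$ and $B'$ with $t$ vertices in one side — concretely, in the $|B'|\le\lfloor n/2\rfloor$ branch, after fixing the $K^2(m)$ one chooses $m-1-2t$ common neighbours in $B'\setminus W$ together with a cheap additional clique structure contributing the remaining $A'$-vertices, all of which is a routine bookkeeping variant.

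The main obstacle, and the only place real care is needed, is pinning down exactly when the codegree bound~(\ref{codegree condition}) fails to give a positive $\Omega(n)$ lower bound on $|N_H(ab)\cap A'|$: this happens precisely when $n\equiv1\pmod4$, $\delta_2(H)=\lfloor n/2\rfloor-1$, and $|B'|=\lfloor n/2\rfloor=(n-1)/2$, so that $\lceil n/2\rceil-1-|B'|=0$; this is the stated exception, and one must check that in every other residue class and every other value of $|B'|$ the gap is at least linear (using $1\le||A'|-|B'||$ is not automatic, so one should instead note that $|A'|,|B'|$ are within $\gamma_1 n$ of $n/2$ and the deficiency $\delta_2(H)+1-|B'|$ is, in all non-exceptional cases, bounded below by a positive constant fraction of $n$ once $\gamma_1$ is small — or handle the borderline $|B'|\in\{\lceil n/2\rceil-O(1)\}$ cases by instead running the $|B'|>\lfloor n/2\rfloor$-type argument, which only needs $|A'|$ not too large). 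I expect steps (ii)–(iii) to be nearly copy-paste from Claim~\ref{d-b-breaker for K_m} with $m^{1/m}n^{1-1/m}$ replaced by the linear term coming from~(\ref{codegree condition}), and step (iv) to be straightforward once the single-copy-avoiding-$W$ lemma is in hand.
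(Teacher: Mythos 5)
Your proposal does not work, and the gap is not a bookkeeping issue: the central step you propose cannot be carried out under the weaker codegree hypothesis.

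In Claim~\ref{d-b-breaker for K_m} the extraction of a $K^2(m)$ via Lemma~\ref{EST} requires, in the double-counting step, the lower bound $|N_H(ab)\cap A'|\geq m^{1/m}n^{1-1/m}$, which is exactly what $\delta_2(H)\geq n/2+m^{1/m}n^{1-1/m}$ supplies. Under~(\ref{codegree condition}) the codegree is only $n/2+O(1)$, so the correct estimate is
$|N_H(ab)\cap A'|\geq\delta_2(H)-(|B'|-1)=\delta_2(H)-|B'|+1$, which is at most $O(1)$ (it can be as small as $1$, or $0$ in the exceptional case). Your claim that this quantity is ``positive and $\Omega(n)$ unless $|B'|=\lfloor n/2\rfloor$'' is simply false; there is no residue class and no non-exceptional value of $|B'|$ for which it is linear in $n$. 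Consequently the RHS of your double count is $O(n)$ rather than $\Omega(n^{2-1/m})$, you never clear the K\H{o}v\'ari--S\'os--Tur\'an threshold, and no $K^2(m)$ can be extracted. The same objection kills the symmetric branch $|B'|>\lfloor n/2\rfloor$, where $|N_H(aa')\cap B'|$ is likewise only $O(1)$. In short: a copy-paste of the $K^3(m)$ argument fundamentally does not go through, because $K^3(m)$ genuinely needs a super-linear codegree excess (that is the whole content of Proposition~\ref{lowerbound}), while $K^3_{m,m}$ does not.

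The paper's actual proof avoids KST entirely for this claim and exploits the much weaker structure of $K^3_{m,m}$ (a sunflower: $m$ disjoint pairs $X_1,\dots,X_m$ sharing a common link $Y$ of size $m$). It splits on whether some pair $a_1a_1'\in\binom{A'}{2}$ has $|N_H(a_1a_1')\cap B'|\geq 2\gamma_1 n$. If yes, $m$ of those $B'$-neighbours are taken as part of the sunflower, and a matching of size $m-1$ is pulled from the common link of $m$ $\gamma_2$-good vertices (which is large because each good vertex misses only $O(\gamma_2 n^2)$ of $E(K^2(A',B'))$), giving type $(m+1,2m-1)$. If no, every pair in $\binom{A'}{2}$ has almost all its codegree inside $A'$; the paper then considers the link $F_b$ of a vertex $b\in B'$ in the $A'A'B'$-subgraph and shows $|F_b|\geq 2m\gamma_1 n$ for some $b$ (this is where the exceptional case $n\equiv1\pmod4$, $|B'|=\lfloor n/2\rfloor$ emerges: outside it, codegree at least $1$ in the right direction already forces $|F_b|$ linear). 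Then either $F_b$ contains a matching of size $m$ (gives a sunflower directly) or a small vertex cover forces a high-degree vertex $a$, i.e.\ a large star, which again gives a sunflower, of type $(3m-1,1)$. The ``furthermore'' part is likewise elementary: pick $t$ good $A'$-vertices and $m-t$ good $B'$-vertices, pair each with a partner of high codegree into $A'$, and choose the $Y$-set from the common intersection. You should rebuild the proof around this sunflower structure rather than around $K^2(m)$-extraction; that is the genuinely new idea required here.
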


\noindent{\it Proof:} If there exists a pair $a_1a_1'\in \binom{A'}2$ such that $|N_H(a_1a_1')\cap B'|\geq 2\gamma_1 n$, then we can choose $m$ distinct vertices $b_1,\ldots,b_m\in N_H(a_1a_1')\cap B_1$ since $2\gamma_1 n-|B_2|>m$. Note that for a $\gamma_2$-good vertex $b\in B'$,
$$|E(H_b)\cap E(K^2(A',B'))|\geq |A'||B'|-\gamma_2 n^2>\frac m{m+1}|A'||B'|,$$
we have $$\left|\bigcap_{i=1}^{m} E(H_{b_i})\cap E(K^2(A',B'))\right|\geq \frac 1{m+1}|A'||B'|.$$ Thus, $\bigcap_{i=1}^{m}E(H_{b_i})\cap E(K^2(A',B'))$ contains a matching of order $m-1$, choose such a matching $a_2b_2', \ldots, a_mb_m'$.
So the subgraph of $H$ induced by $\{a_1', a_1, a_2,\ldots, a_m\}\cup \{ b_1, \ldots, b_m\}\cup\{ b_2', \ldots, b_m'\}$ contains a copy of $K^3_{m,m}$ of type $(m+1,2m-1)$.

Now assume $|N_H(a_1a_2)\cap B'|<2\gamma_1 n$ for any $a_1a_2\in \binom {A'}2$. Then $|N_H(a_1a_2)\cap A'|>n/2-2-2\gamma_1 n$. Let $F$ be the spanning subgraph consisting of all the edges of type $A'A'B'$ of $H$. We claim that if there is some $b\in B'$ such that $|F_b|>2m\gamma_1 n$, then $H$ contains a copy of $K^3_{m,m}$ of type $(3m-1,1)$. In fact, assume that there is some $b\in B'$ with $|F_b|>2m\gamma_1 n$.
First, suppose that $F_b$ contains a matching of size $m$. Let $a_1a_1',\ldots,a_ma_m'$ be a matching of $F_b$. Since $$\left|\bigcap_{i=1}^{m}N_H(a_ia_i')\cap A'\right|>m(n/2-2-2\gamma_1 n)-(m-1)|A'|>|A'|/2,$$
one can choose $m-1$ distinct vertices  $a_1'',\ldots,a_{m-1}''\in \bigcap_{i=1}^{m}N_H(a_ia_i')\cap A'$. And then the edges $a_ia_i'a_j'',a_ia_i'b\in E(H)$ $( i\in [m], j\in [m-1])$ form a copy of $K^3_{m,m}$ of type $(3m-1,1)$. Now suppose that $M$ is a maximum matching in $F_b$ of size at most $m-1$. Clearly, $V(M)$ is a vertex cover of $F_b$ and thus there exists a vertex $a$ in $V(M)$ of degree at least $\frac{2m\gamma_1n}{2(m-1)}\geq |A_2|+ m$. That is to say, there are $m$ distinct {$\gamma_2$}-good vertices $a_1'',\ldots,a_m''$ in {$N_{A'}(ab)$}. Note that for a $\gamma_2$-good vertex $a_i''\in A'$, $|E(H_{a_i''})\cap E(K^1(A'))|\geq \binom{|A'|}{2}-\gamma_2 n^2>\frac m{m+1}\binom{|A'|}{2}$, we have
$$\left|\bigcap_{i=1}^{m}E(H_{a_i''})\cap E(K^1(A'))\right|\geq \frac 1{m+1}\binom{|A'|}{2}.$$
Thus $\bigcap_{i=1}^{m}E(H_{a_i''})\cap E(K^1(A'))$ contains a matching of order $m-1$, choose such a matching $a_2a_2',\ldots, a_ma_m'$. Therefore, the subgraph of $H$ induced by $\{a_1'', \ldots, a_m''\}\cup \{a_2, a_2', \ldots, a_m, a_m'\}\cup\{a,b\}$ contains a copy of $K^3_{m,m}$ of type $(3m-1, 1)$, as desired. So the rest of the case is to show that such a vertex $b\in B'$ with $|F_b|\ge 2m\gamma_1n$ does exist.

If $n\equiv1\pmod{4}$ and $|B'|\leq\lfloor n/2\rfloor-1$ or $n\not\equiv 1\pmod{4}$ and $|B'|\leq\lceil n/2\rceil-1$, then for every pair $ab$ with $a\in A',~b\in B'$, we have $|N_H(ab)\cap A'|\geq 1$. Hence for any $b\in B'$, we have $\delta(F_b)\geq 1$ and so $|F_b|\geq|A'|/2\geq 2m\gamma_1n$, we are done. Now assume $|B'|\geq \lceil n/2\rceil$. Then for any pair $aa'\in\binom{A'}{2}$, we have $|N_H(aa')\cap B'|\geq1$. Since $$\binom{|A'|}{2}/|B'|\geq\binom{n/2-\gamma_1 n}{2}/(n/2+\gamma_1 n)>2m\gamma_1n,$$
 there exist at least one vertex $b\in B'$ such that $|F_b|>2m\gamma_1n$.

Next, we show that $H$ contains a copy $K'$ of $K^3_{m,m}$ of type $(m+2t,2m-2t)$ disjoint from $K$, $0\leq t\leq m$.
{Choose any $m$ distinct $\gamma_2$-good vertices $a_1,\ldots,a_t\in A'\setminus V(K)$ and $b_{t+1},\ldots,b_m\in B'\setminus V(K)$. Since $|E(H_{a_i})\cap\binom{A'}{2}|\geq \binom{|A'|}{2}-\gamma_2 n^2$, there exists at least $6m+1$ vertices $a'\in A'$ with $|N_{H_{a_i}}(a')\cap A'|\ge |A'|-\sqrt{\gamma_2}n$, that is we can choose $t$ distinct vertices $a_1',\ldots,a_t'\in A'\setminus V(K)$  such that $|N_{A'}(a_ia_i')|\geq|A'|-\sqrt{\gamma_2}n$ for $1\leq i\leq t$.
Similarly, since $|E(H_b)\cap E(K^2(A',B'))|\geq |A'||B'|-\gamma_2 n^2$, we can choose $m-t$ distinct vertices $b_{t+1}',\ldots, b_m'\in B'\setminus V(K)$ such that $|N_{A'}(b_jb_j')|\geq|A'|-\sqrt{\gamma_2}n$ for  $t+1\leq j\leq n$. Therefore, we have   $|\left(\cap_{i=1}^t N_{A'}(a_ia_i')\right)\cap\left(\cap_{i=t+1}^mN_A(b_ib_i)\right)\cap A'|\geq |A'|/2$. So we can pick $m$ vertices $a_1'', \ldots, a_m''\in \left(\cap_{i=1}^t N_{A'}(a_ia_i')\right)\cap\left(\cap_{i=t+1}^mN_A(b_ib_i)\right)\cap A'$ different from $a_i, b_i, a_i', b_i'$, $i\in[m]$. Clearly, the subgraph of $H$ induced by $\{a_i, a_i' : i\in [t]\}\cup\{b_i, b_i' : t+1\le i\le m\}\cup \{a_i'' : i\in [m]\}$ contains a copy of $K^3_{m,m}$ of type $(m+2t,2m-2t)$.} This completes the proof. \quad \rule{1mm}{2mm}

\medskip

The next claim shows that we can find a small $K^3(m)$-tiling to cover the vertices in $A_2\cup B_2$.

\begin{claim}\label{cover bad}
Suppose that $\delta_2(H)\ge \lfloor\frac n2\rfloor-1$. Let $W\subset V(H)$ with $|W|\leq\gamma_2 n$. Every vertex $x\in (A_2\cup B_2)\setminus W$ can be covered by a copy of $K^3(m)$ of type $(m,2m)$ avoiding $W$.
\end{claim}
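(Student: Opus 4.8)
The plan is to split into the two cases $x\in A_2$ and $x\in B_2$, which are handled by the same template, and to exploit throughout that $H$ $\gamma_2$-contains $\mathcal{B}[A',B']$. Since in $\mathcal{B}[A',B']$ every pair $bb'\in\binom{B'}{2}$ has codegree $|A'|$ into $A'$ and every cross pair $ab'\in E(K^2(A',B'))$ has codegree $|B'|-1$ into $B'$, the $\gamma_2$-containment forces at most $\sqrt{\gamma_2}\,n^2$ pairs $bb'\in\binom{B'}{2}$ to fail $d_{A'}(bb')\ge|A'|-\sqrt{\gamma_2}\,n$ (call the rest \emph{$A'$-rich}), and likewise at most $\sqrt{\gamma_2}\,n^2$ cross pairs to fail $d_{B'}(ab')\ge|B'|-\sqrt{\gamma_2}\,n$ (\emph{$B'$-rich}); here I use $\gamma_2\ll1$ and $|W|\le\gamma_2 n$.

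For $x\in A_2$ I would first invoke property A3: the link $H_x$ restricted to $\binom{B'}{2}$ has at least $\tfrac23\binom{|B'|}{2}=\Omega(n^2)$ edges. Deleting the non-$A'$-rich pairs and the pairs meeting $W$ (at most $\sqrt{\gamma_2}\,n^2+|W|\,n$ in total, a negligible fraction) leaves a graph $L$ on $B'$ with $\Omega(n^2)$ edges, so by Lemma~\ref{EST} it contains a copy of $K^2(m,m)$ with classes $Y_2,Y_3\subseteq B'\setminus W$ of size $m$. Then $\{x\}\cup Y_2\cup Y_3$ already induces $K^3(1,m,m)$ with parts $\{x\},Y_2,Y_3$; to promote the singleton part to size $m$ I take the common neighbourhood in $A'$ of the $m^2$ rich pairs $bb'$ with $b\in Y_2,\,b'\in Y_3$, which has size $\ge|A'|-m^2\sqrt{\gamma_2}\,n$, and pick distinct $a_2,\dots,a_m$ from it avoiding $W\cup\{x\}$. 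Setting $Y_1=\{x,a_2,\dots,a_m\}\subseteq A'\setminus W$, the subgraph induced by $Y_1\cup Y_2\cup Y_3$ is a copy of $K^3(m)$ of type $(m,2m)$ through $x$ avoiding $W$, because every triple $\{a,b,b'\}$ with $a\in Y_1,\,b\in Y_2,\,b'\in Y_3$ is an edge of $H$ — by the choice of $Y_2,Y_3$ when $a=x$, and because $a_i\in N_{A'}(bb')$ when $a=a_i$.

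For $x\in B_2$ the same scheme runs with property B3 in place of A3: $H_x$ restricted to the cross pairs $E(K^2(A',B'))$ has at least $n^2/50$ edges, and after the same pruning I obtain a dense bipartite graph between $A'$ and $B'$; any $K^2(m,m)$ extracted from it via Lemma~\ref{EST} must have one class $Y_1\subseteq A'\setminus W$ and the other $Y_3\subseteq B'\setminus W$, since for $m\ge2$ a $K^2(m,m)$ is connected and hence cannot lie inside a single side of a bipartite graph. This gives $\{x,a,b'\}\in E(H)$ for all $a\in Y_1,\,b'\in Y_3$; I then complete the part in $B'$ through $x$ by taking $m-1$ common neighbours $b_2,\dots,b_m$ in $B'$ of the $m^2$ $B'$-rich cross pairs $ab'$ ($a\in Y_1,\,b'\in Y_3$), avoiding $W\cup\{x\}\cup Y_3$, and set $Y_2=\{x,b_2,\dots,b_m\}$. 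Again every triple $\{a,b,b'\}$ with $a\in Y_1,\,b\in Y_2,\,b'\in Y_3$ is an edge (from the $K^2(m,m)$ when $b=x$, and from $b_i\in N_{B'}(ab')$ when $b=b_i$), so $Y_1\cup Y_2\cup Y_3$ is the required copy.

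The only genuine obstacle is the bookkeeping that the pruned slice of $H_x$ still has $\Omega(n^2)$ edges, so that Lemma~\ref{EST} applies and the common-neighbourhood sets stay larger than $m$; this rests entirely on $\gamma_2\ll1$, $|W|\le\gamma_2 n$, and the uniform lower bounds A3 and B3. The secondary point needing a word of care is the placement of the $K^2(m,m)$ in the $x\in B_2$ case, resolved by the connectivity remark above; note the minimum codegree hypothesis $\delta_2(H)\ge\lfloor n/2\rfloor-1$ is available if one prefers to bound codegrees directly, but the $\gamma_2$-containment already suffices for all the estimates.
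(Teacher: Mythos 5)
Your proposal is correct and follows essentially the same route as the paper's proof: restrict the link of $x$ to pairs with near-complete codegree into the appropriate side (your ``rich'' pairs play exactly the role of the auxiliary graph $G$ in the paper), extract a $K^2(m,m)$ via the K\H{o}v\'ari--S\'os--Tur\'an bound, and fill out the third class from the common neighbourhood of the $m^2$ pairs, avoiding $W$ throughout. Your remark that the $K^2(m,m)$ in the $x\in B_2$ case necessarily has one class in $A'$ and one in $B'$ is a useful explicit observation that the paper leaves implicit, and your direct count of non-rich pairs lets you skip the paper's restriction to $B_1$, but neither changes the substance of the argument.
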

\noindent{\it Proof of Claim \ref{cover bad}:} Recall that every vertex in $A_1\cup B_1$ is $\gamma_2$-good with respect to $\mathcal{B}[A,B]$. Let $G$ be the graph on vertex set $V$ and edge set consisting of all pairs $xy\in \binom{V}2$ satisfying $d_{\mathcal{B}\setminus H}(xy)\leq \sqrt{\gamma_2}n$. By the definition of $\gamma_2$-good, for each vertex $x\in A_1\cup B_1$, we have $d_G(x)\geq n-\sqrt{\gamma_2}n$.

If $x\in A_2\setminus W$, by A3), we have {$|H_x[B_1\setminus W]|\geq\frac23\binom{|B'|}{2}-\gamma_1n^2-2\gamma_n^2\geq\frac 12\binom{|B'|}{2}$.}
 Hence {$|E(H_x[B_1\setminus W])\cap E(G)|\geq \frac 13\binom{|B'|}{2}$.} Thus, by Lemma~\ref{EST}, $H_x[B_1\setminus W]\cap G$ contains a copy of $K^{2}(m)$, denoted by $M$.
Since $d_H(e)\ge |A'|-\sqrt{\gamma_2}n$ for any $e\in M$, {we have $|\bigcap_{e\in M}N_H(e)\cap A'|\ge |A'|-m^2\sqrt{\gamma_2}n$.  Hence we can choose $\{a_1,\ldots,a_{m-1}\}\subset\left(\bigcap_{e\in M}N_H(e)\cap A'\right)\setminus W$.
} Therefore, the subgraph of $H$ induced by $\{x,a_1,\ldots,a_{m-1}\}\cup V(M)$ contains a copy of $K^3(m)$ of type $(m,2m)$ covering $x$.

Now suppose $x\in B_2\setminus W$. B3) together with A2), B2) imply that {
$$|E(H_x)\cap E(G[A_1\setminus W,B_1\setminus W])|\geq \frac {n^2}{50}-2\gamma_1n^2-\gamma_2n^2-\sqrt\gamma_2n^2\geq\frac 1{100}|A'||B'|.$$
By Lemma \ref{EST}, $H_x\cap G[A_1\setminus W,B_1\setminus W]$ contains a copy of $K^{2}(m)$ avoiding $W$, denoted by $M'$.} Since $d_H(e)\geq |B'|-\sqrt{\gamma_2}n$ for any $e\in M'$, we have $|\bigcap_{e\in M}N_H(e)\cap B'|\ge |B'|-m^2\sqrt{\gamma_2}n$. Hence we can choose $m-1$ distinct vertices $b_1,\ldots,b_{m-1}\in \left(\bigcap_{e\in M'}N_H(e)\cap B'\right)\setminus W$.
Therefore, the subgraph of $H$ induced by $\{x,b_1,\ldots,b_{m-1}\}\cup V(M)$ contains a copy of $K^3(m)$ of type $(m,2m)$ covering $x$, as desired. \quad \rule{1mm}{2mm}

\medskip

\noindent{\it Proof of Lemma~\ref{extremalK_m}:} Let $t\equiv|B'| \pmod{2m}$ such that $0\leq t\leq 2m-1$. Let $\mathcal{K}_1$ be $2m-t$ disjoint copies of $K^3(m)$ of type $(m+1,2m-1)$ or $t$ disjoint copies of $K^3(m)$ of type $(3m-1, 1)$ in $H$ guaranteed by Claim~\ref{d-b-breaker for K_m}. Note that $|V(\mathcal{K}_1)|\le 6m^2$ is small enough. We can apply Claim \ref{cover bad} recursively to $H$ to obtain a $K^3(m)$-tiling $\mathcal{K}_2$ covering  all vertices of $(A_2\cup B_2)\setminus V(\mathcal{K}_1)$.
Moreover, every copy of $K^3(m)$ in $\mathcal{K}_2$ is of type $(m, 2m)$. Let $A'':= A'\setminus V(\mathcal{K}_1\cup\mathcal{K}_2)$ and $B'':= B'\setminus V(\mathcal{K}_1\cup\mathcal{K}_2)$. Clearly, $|B''|\equiv 0\pmod {2m}$. Since $n\in 3m\mathbb{N}$, we have $|A''\cup B''|\equiv0\pmod{3m}$ and $|A''|\equiv0\pmod{m}$. Since $|\mathcal{K}_1|< 2m$ and $|\mathcal{K}_2|\leq2\gamma_1n$, we have $n/2-5m\gamma_1n\leq|A''|, |B''|\leq n/2+\gamma_1n$. Let $|A''|=(6a+s)m, |B''|=(6b'+2t')m$. Then it is easy to check that $s\equiv t'\pmod3$. So we can set $|A''|=(6a+s)m$ and $|B''|=(6b+2s)m$ for some $0\leq s\leq5$. {Now, each vertex in $A''\cup B''$ is $\gamma_2'$-good {with respect to $\mathcal{B}(A'', B'')$} for some constant $\gamma_2'\gg\gamma_2$.}
{By Lemma~\ref{good}, we can find  $6(b-a)+s$ disjoint copies of $K^3(m)$ of type $(m,2m)$ if $b-a\geq0$, or $2(a-b)$ disjoint copies of $K^3(m)$ of type $(3m,0)$ and $s$ disjoint copies of $K^3(m)$ of type $(m,2m)$ if $b-a<0$. Let $\mathcal{K}_3$ be these copies of $K^3(m)$.}
Thus, {$|\mathcal{K}_3|\leq 6|b-a|+s\le 6\gamma_1 n$.}
Let $A^*=A''\setminus V(\mathcal{K}_3)$ and $B^*=B''\setminus V(\mathcal{K}_3)$. Then we have {$|A^*|=|B^*|\equiv 0\pmod{6m}$
and $|A^*|=|B^*|\geq n/2-100\gamma_1 mn$. Clearly, $A^*\subset A_1$ and $B^*\subset B_1$. Let $H^*=H[A^*\cup B^*]$. Since both $|A_2|$ and $|B_2|$ are small, it can be checked that there is some constant $\gamma_3\gg\gamma_2'$ such that every vertex in $H^*$ is $\gamma_3$-good with respect to $\mathcal{B}[A^*,B^*]$. By Lemma \ref{good},  $H^*$ contains a $K^3(m)$-factor, say $\mathcal{K}_4$. Therefore,  $\mathcal{K}_1\cup\mathcal{K}_2\cup\mathcal{K}_3\cup\mathcal{K}_4$ is a $K^3(m)$-factor of $H$. \quad \rule{1mm}{2mm}

\medskip

\noindent{\it Proof of Lemma~\ref{extremalC_4}:} The proof is similar to the one of Lemma \ref{extremalK_m}. Note that $n\in 3m\mathbb{N}$ and $\delta_2(H)$ satisfies condition (\ref{codegree condition}). Let $t\equiv|B'|\pmod {2m}$ with $0\leq t\leq 2m-1$. If $t$ is even (note that $|B'|=\lfloor n/2\rfloor$ and $n\equiv 1\pmod 4$ belongs to this case), by Claim \ref{d-b-breaker for C_4} for $m-t/2$, we can find a copy $K'$ of $K^3_{m,m}$ of type $(3m-t, t)$ in $H$.  Set $\mathcal{K}_1=\{K'\}$. Now assume $t$ is odd.
Then we can find two disjoint copies $K, K'$ of $K_{m,m}^3$ of types $(m+1,2m-1)$ and $(3m-t-1, t+1)$ (by Claim~\ref{d-b-breaker for C_4} for $m-(t+1)/2$)   or of types $(3m-1,1)$ and $(3m-t+1, t-1)$ (by Claim~\ref{d-b-breaker for C_4} for $m-(t-1)/2$).     In this case, set $\mathcal{K}_1=\{K,K'\}$.
For each case, we have  $|B'\setminus V(\mathcal{K}_1)|\equiv 0\pmod{2m}$ and $|A'\setminus V(\mathcal{K}_1)|\equiv 0\pmod{m}$.
Since $K^3_{m,m}$ is a spanning subgraph of $K^3(m)$, the existence of $K^3_{m,m}$-tiling $\mathcal{K}_2,\mathcal{K}_3,\mathcal{K}_4$ follows from that of $K^3(m)$-tillings $\mathcal{K}_2,\mathcal{K}_3,\mathcal{K}_4$ in $H$ with the same argument as in Lemma~\ref{extremalC_4}.
Finally we have $\mathcal{K}_1\cup\mathcal{K}_2\cup\mathcal{K}_3\cup\mathcal{K}_4$ is a $K^3_{m,m}$-factor of $H$. \quad \rule{1mm}{2mm}

\vspace{5pt}

\noindent {\bf Acknowledgement. } The research is
supported by National Natural Science Foundation of China (no.  11671376) and National Natural Science Foundation of Anhui Province (no. 1708085MA18).

\end{document}